\newcommand{\EE}{\bm{E}}
\newcommand{\GFF}{\mathrm{GFF}}
\newcommand{\SG}{\mathrm{SG}}
\newcommand{\PPP}{\mathrm{PPP}}
\newcommand{\ZDM}{{\rm Z}}
\newcommand{\scaling}{\sqrt{8\pi}}
\newcommand{\Phiaux}{\Psi_s}
\newcommand{\XGFFms}{X_s^\GFF}
\newcommand{\XGFFmseps}{X_s^{\GFF_\epsilon}}
\newcommand{\extprocSG}{\eta_{r_\epsilon, \epsilon}^\SG}
\newcommand{\extprocGFFms}{\eta_{r_\epsilon, \epsilon}^{\GFF,s}}
\newcommand{\extprocPhis}{\eta_{r_\epsilon, \epsilon}^s}
\newcommand{\Xsc}{X_s^c}
\newcommand{\ZGFFs}{\ZDM_s^\GFF}
\author{Michael Hofstetter \footnote{University of Cambridge, Statistical Laboratory, DPMMS. }}
\title{Extreme local extrema of the sine-Gordon field}
\begin{document}
\maketitle
\begin{abstract}
We prove that for $\beta<6\pi$ the local extremal process of the massive sine-Gordon field on the unit torus in $d=2$ converges to a Poisson point process with random intensity measure $\ZDM^\SG(dx) \otimes e^{-\alpha h}dh$ for some $\alpha>0$.
The proof combines existing methods for the extremal process associated with the Gaussian free field,
which was introduced and studied by Biskup and Louidor, and a strong coupling between the sine-Gordon field and the Gaussian free field.
\end{abstract}

\section{Introduction and main result}
We consider the (continuum) sine-Gordon field on the two-dimensional torus $\Omega=\T^2$
with mass $m>0$ and coupling constants $z \in \R$ and $0<\beta<6\pi$.
Regularised using a lattice of mesh $\epsilon$ such that $1/\epsilon$ is an integer,
its distribution is the probability measure on $\R^{\Omega_\epsilon}=\{\varphi\colon \Omega_\epsilon \to \R\}$ given by
\begin{equation}   
\label{eq:DefinitionSGMeasure}
  \nu^{\SG_\epsilon}(d\varphi) = 
  \frac{1}{Z_\epsilon}
  \exp\qa{-\epsilon^2\sum_{x\in \Omega_\epsilon} \pa{\frac{1}{2}\varphi_x (-\Delta^\epsilon\varphi)_x + \frac12 m^2\varphi_x^2 + 2 z \epsilon^{-\beta/4\pi} \cos(\sqrt{\beta}\varphi_x)}} d\varphi,
\end{equation}
where $\Omega_\epsilon =\T^2 \cap \epsilon\Z^2$ is the discretised unit torus
and $\Delta^\epsilon$ is the discretised Laplacian, i.e.\
$\Delta^\epsilon f(x) = \epsilon^{-2} \sum_{y \sim x} (f(y)-f(x))$
with $x \sim y$ denoting that $x$ and $y$ are neighbours in $\epsilon\Z^2$.

For $z=0$ the periodic terms in \eqref{eq:DefinitionSGMeasure} disappears and the remaining density is that of a multivariate Gaussian random variable with mean $0$ and covariance $(-\Delta^\epsilon + m^2)^{-1}$, also known as the (massive) Gaussian free field (GFF) on $\T^2$.
In what follows we use the notation $\nu^{\GFF_\epsilon}$ for the law on $\R^{\Omega_\epsilon}$ of this special case.

The sine-Gordon model is a well-studied model in Euclidean quantum field theory,
where measures $\nu$ on the set of distributions $S'(\Omega)$ are studied that have a formal density with respect to the continuum Gaussian free field denoted as $\nu^\GFF$, i.e.\
\begin{equation}
\label{eq:continuum-density-eqft-model}
d\nu(\varphi) \propto e^{-\int_{\Omega} V_0(\varphi(x))dx } d \nu^\GFF(\varphi).
\end{equation}
The choices of $V_0$ are restricted by a set of physically motivated axioms, see \cite{MR0161603} and \cite{MR0329492} \cite{MR0376002},
which are satisfied by the periodic interaction $V_0(\varphi_x) = z\cos(\sqrt{\beta} \varphi_x)$ .
Other choices of physical relevance are the quartic interaction with $V_0(\varphi_x)= \lambda \varphi_x^4$, $\lambda>0$ ($\varphi_2^4$-model)
and the exponential interaction with $V_0(\varphi_x)=\lambda \cosh(\sqrt{\beta}\varphi_x)$ (sinh-Gordon model).
Since the density $\nu$ as stated in \eqref{eq:continuum-density-eqft-model} is ill-defined,
a re\-norm\-al\-isa\-tion and regularisation procedure is necessary to give a rigorous construction of this object. 
A common technique is to replace the continuum Gaussian free field $\nu^\GFF$ by a suitable regularisation
and interpret the non-linearity $V_0$ as a Wick-ordering denoted as $:V_0:$.
For a rigorous definition of the Wick-ordering we refer to \cite[Section 8.5]{MR887102}.
If one regularises the continuum free field by the lattice $\Omega_\epsilon$, that is replace it by the discrete GFF denoted as $\nu^{\GFF_\epsilon}$, then this leads precisely to \eqref{eq:DefinitionSGMeasure}.
The continuum object corresponding to the formal definition in \eqref{eq:continuum-density-eqft-model} can then be constructed as weak limit as $\epsilon \to 0$.
Under suitable assumptions on $z$ and $\beta$,
it is known that the probability measures $\nu^{\SG_\epsilon}$ converge weakly as $\epsilon \to 0$ to a non-Gaussian
probability measure $\nu^{\SG}$ on $H^{-\kappa}(\T^2)$ for any $\kappa>0$.

Such a rigorous construction of the sine-Gordon model is non-trivial and was for instance achieved in \cite{MR914427} and \cite{MR1240586}, \cite{MR1777310}.
The sine-Gordon model in $d=2$ on the full space $\R^2$ was constructed in \cite{Barashkov2022SineGordon} using stochastic control techniques, while the work \cite{BauerschmidtWebb2023Coleman} studies the massless theory $m\to 0$ at the free fermion point $\beta=4\pi$.
Moreover, dynamical aspects of the sine-Gordon field were studied in \cite{MR3452276} and \cite{1808.02594}.
For more details on the sine-Gordon model, we refer to the introduction of \cite{MR4399156}. 

Using a coupling between the sine-Gordon field and the Gaussian free field it was recently proved in \cite{MR4399156} that the centred global maximum of the sine-Gordon field converges in distribution to a randomly shifted Gumbel distribution; see Section \ref{sec:SG-max} for the precise statement.
In this article we continue to investigate the extremal behaviour of the sine-Gordon field by capturing also the local maxima and their locations.

Let $\Phi^{\SG_\epsilon}$ be a realisation of the sine-Gordon field on $\Omega_\epsilon$, i.e.\ a random variable with values in $\R^{\Omega_\epsilon}$ and distribution $\nu^{\SG_\epsilon}$.
Following \cite{MR3509015} we call a point $x\in \Omega_\epsilon$ an $r$-local maximum of $\Phi^{\SG_\epsilon}$, if
\begin{equation}
\label{eq:SG-r-local-max}
\Phi^{\SG_\epsilon}(x)= \max_{\Lambda_r(x)}\Phi^{\SG_\epsilon},
\end{equation}
where $\Lambda_r(x)=\{y\in \Omega_\epsilon\colon |x-y|\leq r\}$
and $|\cdot|$ is the Euclidean distance on the continuum torus $\Omega$. 
Let $\Theta_r^\SG$ be the set of $r$-local maxima of $\Phi^{\SG_\epsilon}$. 

Our main object of interest is the local extremal process associated with the sine-Gordon field which encodes both the locations of the local extrema and their values.
It is formally given as the $\epsilon \to 0$ limit of the random measures $\extprocSG$ on $\Omega\times \R$ defined as
\begin{equation}
\label{eq:defn-extremal-process-sg}
\extprocSG\equiv \sum_{x\in \Theta_{r_\epsilon}^\SG} \delta_x\otimes\delta_{\Phi^{\SG_\epsilon}(x) - m_\epsilon},
\end{equation}
where $\delta$ denotes the Dirac measure, $(x, \Phi^{\SG_\epsilon}(x)-m_\epsilon)\in \Omega_\epsilon \times \R$ are random and 
\begin{equation}
\label{eq:m-eps}
m_\epsilon = \frac{1}{\sqrt{2\pi}} (2\log \frac{1}{\epsilon} - \frac{3}{4}\log \log \frac{1}{\epsilon}).
\end{equation}
Throughout this work, we refer to $\extprocSG$ and related objects as extremal process.
However, we emphasise that we do not attempt to study the \textit{full} extremal process,
which includes all local maxima as well as the shape of the field around each local maxima.

Studying the limiting behaviour of \eqref{eq:defn-extremal-process-sg} when $\epsilon \to 0$,
one has to impose conditions on the sequence $(r_\epsilon)_\epsilon$, so that all extreme local maxima of the field are taken into account.
The choice of this sequence is motivated by the typical shape of the extremal landscape of the field; see the remark below Theorem \ref{thm:coupling-bd}.
The main result of this article is the following convergence of $\extprocSG$ to a Poisson point process (PPP) with random intensity measure,
where convergence here and throughout this paper is understood with respect to the topology of vague convergence on the space of Radon measures on $\Omega\times \R$.

\begin{theorem}
\label{thm:convergence-to-ppp}
There is a random measure $\ZDM^{\SG}(dx)$ on $\Omega$ with $\ZDM^\SG(\Omega) <\infty$ a.s.\ and $\ZDM^\SG(A)>0$ a.s.\ for every non-empty open set $A\subseteq \Omega$,
such that for any sequence $(r_\epsilon)_\epsilon$ with $r_\epsilon \to 0$ and $r_\epsilon/\epsilon \to \infty$,
\begin{equation}
\label{eq:convergence-to-PPP-SG}
\extprocSG \to \PPP(\ZDM^{\SG}(dx) \otimes e^{-\scaling h}dh).
\end{equation}
\end{theorem}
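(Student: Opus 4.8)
The plan is to import the extremal-process convergence for the Gaussian free field and transfer it to the sine-Gordon field through the strong coupling of \cite{BH2020Max}. Two ingredients are needed. \emph{Gaussian side:} for the massive GFF $\Phi^{\GFF_\epsilon}$ on $\T^2$ (covariance $(-\Delta^\epsilon+1)^{-1}$) the analogously defined extremal process $\eta^{\GFF}_{r_\epsilon,\epsilon}$ converges to $\PPP(\ZDM^\GFF(dx)\otimes e^{-\scaling h}dh)$, where $\ZDM^\GFF$ is the Biskup--Louidor measure -- a random measure of full support and a.s.\ finite total mass, and a measurable functional of the limiting field; on the torus with mass one this follows from the Biskup--Louidor framework \cite{MR3509015} (the difference of the massive and massless Green functions being smooth), and it holds jointly with $\Phi^{\GFF_\epsilon}$ as a random element of $H^{-\kappa}(\T^2)$. \emph{Coupling:} for $\beta<6\pi$, \cite{BH2020Max} provides a coupling of $\Phi^{\SG_\epsilon}$ and $\Phi^{\GFF_\epsilon}$ on one probability space such that $\Psi_\epsilon:=\Phi^{\SG_\epsilon}-\Phi^{\GFF_\epsilon}$ converges, in probability in $C^\sigma(\T^2)$ for some $\sigma>0$, to a random continuous field $\Psi$; in particular $\sup_\epsilon\|\Psi_\epsilon\|_{C^\sigma}$ is tight and $\|\Psi\|_\infty<\infty$ a.s. The role of $\beta<6\pi$ is precisely that this difference is a genuine (Hölder) function, so it cannot affect the centering $m_\epsilon$ and only shifts local maxima by an $O(1)$ continuous amount.

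Next I would match the local maxima on the cluster scale. Fix $f\in C_c^+(\Omega\times\R)$. Working on the high-probability event $\{\|\Psi_\epsilon\|_{C^\sigma}\le M\}$ (the exceptional probability being bounded in $\epsilon$ and $\to0$ as $M\to\infty$), the oscillation of $\Psi_\epsilon$ over a ball of radius $\rho$ is at most $M\rho^\sigma$, hence $o(1)$ on scale $r_\epsilon$ and a fortiori on the $O(\epsilon)$ cluster scale of the GFF extremes. Using the Biskup--Louidor description of the extreme level sets of the GFF -- the $r_\epsilon$-local maxima above height $m_\epsilon+a$ form, with high probability, a tight number of well-separated clusters, each of diameter $O(\epsilon)\ll r_\epsilon$ and carrying a single $r_\epsilon$-local maximum -- one shows that every $r_\epsilon$-local maximum of $\Phi^{\SG_\epsilon}=\Phi^{\GFF_\epsilon}+\Psi_\epsilon$ in the support of $f$ lies within $o(1)$ of such a GFF cluster at some $x_i$ of GFF-height $m_\epsilon+h_i$, and each such cluster produces exactly one $r_\epsilon$-local maximum of $\Phi^{\SG_\epsilon}$, at a point $x_i+o(1)$ and of height $m_\epsilon+h_i+\Psi_\epsilon(x_i)+o(1)$. (Order reversals of two $r_\epsilon$-close GFF clusters by the perturbation are negligible -- they have vanishing probability in the PPP limit and anyway leave macroscopic locations unchanged.) Consequently
\[
\extprocSG(f)=\int_{\Omega\times\R} f\bigl(x,\,h+\Psi_\epsilon(x)\bigr)\,\eta^{\GFF}_{r_\epsilon,\epsilon}(dx\,dh)+o(1),
\]
with the $o(1)$ in probability.

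It then remains to pass to the limit. The map sending a Radon measure $\mu$ and a function $g\in C^0$ to the pushforward of $\mu$ under $(x,h)\mapsto(x,h+g(x))$ is continuous for the vague topology and uniform convergence of $g$; combining this with the joint convergence $(\eta^{\GFF}_{r_\epsilon,\epsilon},\Psi_\epsilon)\to(\PPP(\ZDM^\GFF\otimes e^{-\scaling h}dh),\Psi)$ and the previous display shows that $\extprocSG$ converges to the pushforward of $\PPP(\ZDM^\GFF(dx)\otimes e^{-\scaling h}dh)$ under $(x,h)\mapsto(x,h+\Psi(x))$. A pushforward of a Poisson process is Poisson, and the pushforward intensity equals $\ZDM^\SG(dx)\otimes e^{-\scaling h}dh$ with $\ZDM^\SG(dx):=e^{\scaling\,\Psi(x)}\,\ZDM^\GFF(dx)$; since $\|\Psi\|_\infty<\infty$ a.s., the bounds $e^{-\scaling\|\Psi\|_\infty}\ZDM^\GFF\le\ZDM^\SG\le e^{\scaling\|\Psi\|_\infty}\ZDM^\GFF$ transfer a.s.\ finiteness of total mass and a.s.\ positivity on nonempty open sets from $\ZDM^\GFF$ to $\ZDM^\SG$. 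This is \eqref{eq:convergence-to-PPP-SG}.

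The main obstacle is the joint convergence $(\eta^{\GFF}_{r_\epsilon,\epsilon},\Psi_\epsilon)\to(\PPP(\ZDM^\GFF\otimes e^{-\scaling h}dh),\Psi)$ invoked above: the shift $\Psi_\epsilon$ is random and strongly correlated with $\Phi^{\GFF_\epsilon}$, so one cannot simply condition on it. Because $\Psi_\epsilon\to\Psi$ in probability it is enough, by a Slutsky-type argument, to show that $\eta^{\GFF}_{r_\epsilon,\epsilon}$ converges jointly with the \emph{fixed} limit $\Psi$; this in turn requires that $\Psi$ be measurable with respect to the $\sigma$-algebra generated by the limiting GFF and that Biskup--Louidor convergence be compatible with (stable with respect to) that $\sigma$-algebra. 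Establishing this seems to require unpacking the coupling of \cite{BH2020Max} scale by scale -- controlling the difference field at scales bounded away from the final one, where it is a smooth functional of the coarse field and converges jointly with the extremal process, and then removing the regularisation uniformly. The cluster-level matching of local maxima in the second step, though routine in spirit, must also be carried out with care; both steps, however, stay within the toolbox already developed for the GFF.
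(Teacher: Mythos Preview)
Your overall architecture---coupling, cluster-matching of local maxima under a H\"older perturbation, then passing to the limit via a pushforward---is exactly the strategy of the paper, and the cluster-matching argument you sketch is what is carried out in Section~3.4 (Lemmas~\ref{lem:r-local-maxima}--\ref{lem:correspondence-maxima} and Proposition~\ref{prop:changing-sets-of-extrema}). You also correctly isolate the real obstacle: the shift $\Psi_\epsilon=\Phi^{\SG_\epsilon}-\Phi^{\GFF_\epsilon}$ is \emph{not} independent of $\Phi^{\GFF_\epsilon}$, so the joint convergence $(\eta^{\GFF}_{r_\epsilon,\epsilon},\Psi_\epsilon)\to(\PPP,\Psi)$ cannot be obtained by conditioning, and your proposed fix via stability with respect to the $\sigma$-algebra of the limiting field is not established anywhere in the toolbox you invoke.

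The paper resolves this not by proving such stability, but by a concrete decoupling trick that you gesture at in your last paragraph without quite landing on. For each scale $s>0$ one has $\Phi_0^{\SG}=(\Phi_0^{\GFF}-\Phi_s^{\GFF})+\Phi_s^{\SG}+R_s$ with $\|R_s\|_\infty\to0$ uniformly in $\epsilon$ as $s\to0$. The point is that $\Phi_0^{\GFF}-\Phi_s^{\GFF}$ is \emph{independent} of $\Phi_s^{\SG}$ (Theorem~\ref{thm:coupling}). One then rewrites $\Phi_0^{\GFF}-\Phi_s^{\GFF}\stackrel{d}{=}X_s^{\GFF}+X_s^h$ with $X_s^{\GFF}$ a massive GFF of mass $m^2+1/s$, and studies the auxiliary field $\Psi_s=X_s^{\GFF}+X_s^c$ where $X_s^c=X_s^h+\Phi_s^{\SG}$ is continuous and, crucially, \emph{independent} of $X_s^{\GFF}$. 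Now the conditioning argument you wanted goes through cleanly (Theorem~\ref{thm:convergence-laplace-functionals-phis}): one conditions on $X_s^c$, applies the GFF extremal-process convergence (Theorem~\ref{thm:ext-process-massive-GFF}) to $X_s^{\GFF}$, and obtains $\ZDM_s(dx)=e^{\alpha X_s^c}\ZDM_s^{\GFF}(dx)$. Finally one sends $s\to0$ (Lemma~\ref{lem:reduction-to-phis}); the measure $\ZDM^{\SG}$ is obtained as the $s\to0$ weak limit of $\ZDM_s$, not directly as $e^{\alpha\Psi}\ZDM^{\GFF}$ as you write (though your formula is the morally correct one). So the missing idea is precisely this two-step limit $\epsilon\to0$ then $s\to0$, which replaces the dependent decomposition $\Phi^{\GFF}+\Psi_\epsilon$ by an independent one $X_s^{\GFF}+X_s^c$ at the cost of a remainder $R_s$ that is handled separately.
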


A Poisson point process with random intensity measure $\lambda$, here denoted as $\PPP(\lambda)$, is sometimes referred to as a Cox-Process.
Such a process can be sampled by first sampling from the distribution of $\lambda$,
and then sampling independently a usual Poisson point process with deterministic intensity measure given by the sample of $\lambda$. 

Theorem \ref{thm:convergence-to-ppp} is known for the by now well-understood massless Gaussian free field (GFF) with Dirichlet boundary condition; see \cite[Theorem 1.1]{MR3509015}.
The interpretation of the limiting Poisson process is that in the limit the heights of the local extrema become independent,
while their spatial correlation is described by a random measure.
It is remarkable that such a precise understanding is also obtained for the non-Gaussian sine-Gordon field.
In fact, it is conjectured that analogous results hold for other log-correlated fields and that the limiting point process comes from the small scales of the field; see \cite{PhysRevE.63.026110}.
However, few results confirming this universality have been rigorously established so far.

The minor difference between the constant $\scaling$ in \eqref{eq:convergence-to-PPP-SG} and the analogous result for the GFF comes from the different scaling:
here the normalisation of the field is such that $\var{\Phi^{\SG_\epsilon}(0)} \sim \frac{1}{2\pi} \log \frac{1}{\epsilon}$. 
We also emphasise that we consider the discretised fields on the discretised unit torus $\Omega_\epsilon\subseteq \epsilon \Z^2$ rather than on increasing boxes $(0,N)^2 \cap \Z^2$ as in \cite{MR3509015}. 
In order to correctly scale the sequence $(r_N)_N$ from $(0,N)^2\cap \Z^2$ to our setting,
one has to multiply by $1/N$, which results in the seemingly different conditions for the sequence $(r_\epsilon)_\epsilon$ in Theorem \ref{thm:convergence-to-ppp} compared to \cite[Theorem 1.1]{MR3509015}.

Finally, we remark that Theorem \ref{thm:convergence-to-ppp} extends \cite[Theorem 1.1]{MR4399156} on the convergence of the centred global maximum of the sine-Gordon field.
Indeed, the distribution function of $\max_{\Omega_\epsilon}\Phi^{\SG_\epsilon} -m_\epsilon$ at $x\in\R$ can be expressed in terms of $\extprocSG$ as the probability that no points are seen above height $x\in\R$.
The convergence of $(\extprocSG)_\epsilon$ to a Poisson point process then implies that the distribution function of the centred maximum converges and that the limit is given by the avoidance probability of the set $\Omega \times (x,\infty)$.
By a standard result for Poisson point process this probability is equal to the Laplace transform of $\ZDM^\SG(\Omega)$ with spectral parameter $\frac{1}{\sqrt{8\pi}} e^{-\sqrt{8\pi}}$,
where $\ZDM^\SG(\Omega)$ is the total mass of the measure $\ZDM^\SG(dx)$ from Theorem \ref{thm:convergence-to-ppp}.
Comparing this with \cite[Theorem 1.1]{MR4399156}, it follows that the random shift therein, denoted as $\ZDM^\SG$, is in fact equal to $\ZDM^\SG(\Omega)$.
The main difference between the present work and the discussion of the maximum in \cite[Section 4]{MR4399156} is that the subdivision of the torus into boxes of macroscopic side length $1/K$ used in \cite{MR4399156} is not needed here.
Instead, we use the convergence of Laplace functionals to identify limiting objects as Poisson point point processes and,
using the coupling between the GFF and the sine-Gordon field,
show that the key results in \cite{MR3509015} continue to hold when adding a continuous non-Gaussian and independent field.
Difficulties arise when considering the joint convergence of the extremal process associated with the GFF and the continuous field, since both objects are not independent.

\section{Existing results and related work}
\label{sec:existing-results}
In this section we give a detailed literature review on results that are particularly relevant to the present article. 
The first part is devoted to extreme values for log-correlated Gaussian fields, while the second part focuses on results related to the sine-Gordon field.

\subsection{Extreme values of log-correlated fields}
In recent years there has been significant progress in the study of extreme values of log-correlated fields, in particular in that of the Gaussian free field.
Generally speaking the main interest in this area is on understanding the limiting behaviour of extremal field values as the regularising parameter $\epsilon>0$ tends to $0$.
Since the limiting object of the random field takes values in the space of distributions, it is clear that the local and global extremes diverge when $\epsilon \to 0$,
and hence, a recentring of the extremal field values has to be applied before addressing further questions.
We remark that most results which we mention below were proved for fields defined on a boxes $\big([0,N)\cap \Z\big)^2$ with Dirichlet boundary condition and with $N\to \infty$. 
As for the space regularisation, this approach is completely equivalent to our lattice regularisation using a grid of width $\epsilon>0$.
Thus, to be consistent in the notation, we state all results in this section for fields defined on $\big( [0,1)\cap \epsilon \Z\big)^2$ unless specified otherwise.

For the GFF the correct recentring was found to be $m_\epsilon$ as defined in \eqref{eq:m-eps},
which equals the expectation value of the global maximum of the GFF up to order $1$, i.e.\ for $\Phi^{\GFF_\epsilon} \sim \nu^{\GFF_\epsilon}$, as $\epsilon \to 0$,
\begin{equation}
\E[\max_{\Omega_\epsilon} \Phi^{\GFF_\epsilon} ] = m_\epsilon + O(1).
\end{equation}
The first order logarithmic term was identified by Bolthausen, Deuschel and Giacomin in \cite{MR1880237} using a multiscale analysis of the maximum to reduce the problem to a field with underlying tree structure.
In particular, the first order divergence is the same as if the field values were independent,
i.e.\ if $(\Phi^\epsilon(x))_{x\in \Omega_\epsilon}$ was a collection of independent centred Gaussian random variables with variances $\var(\Phi^\epsilon(x)) = \frac{1}{2\pi}\log \frac{1}{\epsilon}$.
This means that even though the GFF is a correlated field, its maximum is in first order the same as if there was no correlation at all.

However, the correlations become apparent in the second order: for the GFF maximum  the loglog-term comes with the prefactor $-\frac{1}{\sqrt{2\pi}}\frac{3}{4}$,
while in the iid case the corresponding prefactor is $-\frac{1}{\sqrt{2\pi}}\frac{1}{4}$.
Thus, the spatial correlations lead to a damping effect, for which the GFF maximum is typically smaller than that of independent random variables.

This correction in the second order was first identified by Bramson in \cite{MR494541} for branching Brownian motion and later for branching random walk by Addario-Berry and Reed in \cite{MR2537549}.
Using a comparison between the maximum of the GFF and the maximum of a (modified) branching random walk
Bramson and Zeitouni proved in \cite{MR2846636} that this correction also holds for the GFF maximum and that the sequence $(\max_{\Omega_\epsilon} \Phi^{\GFF_\epsilon}-m_\epsilon)_\epsilon$ is tight. 

Finer properties of the GFF extremal values were revealed by Ding and Zeitouni in \cite{MR3262484}, where the authors studied the sets of near maxima, i.e.\ random sets of vertices of the form
\begin{equation}
\Gamma_\epsilon^\GFF(\lambda) = \{ x\in \big( [0,1)\cap \epsilon \Z\big)^2 \colon \Phi^{\GFF_\epsilon}(x) \geq m_\epsilon -\lambda \}.
\end{equation}
We state two of their main results here, as they are particularly important later in this article.
The first one is on the geometry of the set of near maxima and states that vertices
where the field attains high values are typically either close to or far away from each other.
\begin{theorem}[\hspace{-3.1pt} {\cite[Theorem 1.1]{MR3262484}}]
\label{thm:near-maxima}
There is a constant $c>0$, such that we have
\begin{equation}
\label{eq:near-maxima}
\lim_{r\to \infty} \limsup_{\epsilon \to 0} \P(\exists u,v \in \Gamma_\epsilon^\GFF (c\log\log r) \colon \epsilon r < |u-v| < 1/r)=0.
\end{equation}
\end{theorem}
This result was later generalised to all log-correlated Gaussian fields with the constant $c>0$ being uniform for all such fields; see \cite[Lemma 3.3]{MR3729618} for the precise statement. 

The second main result in \cite{MR3262484} states that the size of the set of near maxima grows exponentially in $\lambda$.
\begin{theorem}[ \hspace{-3.5pt}{\cite[Theorem 1.2]{MR3262484}}]
\label{thm:size-level-sets}
There are constants $0<c<C$ such that 
\begin{equation}
\lim_{\lambda\to \infty} \liminf_{\epsilon \to 0} \P(ce^{c\lambda } \leq |\Gamma_\epsilon^\GFF(\lambda)| \leq Ce^{C\lambda}) = 1. 
\end{equation}
\end{theorem}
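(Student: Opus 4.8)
The plan is to prove the lower and upper bounds separately, in both cases exploiting the approximate branching structure of the two-dimensional discrete GFF across the dyadic scales between $\epsilon$ and $1$, together with the sharp second-order information on the maximum that is already built into $m_\epsilon$.

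\textbf{Upper bound.} A plain first moment is not enough: since $m_\epsilon$ carries the Bramson correction $-\tfrac{3}{4\sqrt{2\pi}}\log\log\tfrac1\epsilon$ rather than the first-moment value $-\tfrac{1}{4\sqrt{2\pi}}\log\log\tfrac1\epsilon$, one computes that $\E|\Gamma_\epsilon^\GFF(\lambda)|$ diverges as $\epsilon\downarrow0$ (like a power of $\log\tfrac1\epsilon$ times $e^{\sqrt{8\pi}\lambda}$). The remedy is a truncated first moment. Using a white-noise (or Gibbs--Markov) decomposition $\Phi^{\GFF_\epsilon}=\sum_k\Phi_k$ into scale-$2^{-k}$ increments, associate to each site $x$ its coarse profile $t\mapsto S_t(x):=\sum_{2^{-k}\ge t}\Phi_k(x)$, which behaves like a Gaussian random walk run over $\asymp\log\tfrac1\epsilon$ steps, and restrict to the event $\mathcal G(x,\lambda)$ that $S_t(x)$ stays below the barrier $t\mapsto\tfrac{2}{\sqrt{2\pi}}\log\tfrac1t$, shifted by $\lambda$ and a lower-order correction, at all intermediate scales. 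A ballot estimate for Gaussian bridges shows that, conditionally on $x\in\Gamma_\epsilon^\GFF(\lambda)$, the event $\mathcal G(x,\lambda)$ has probability of order $(\log\tfrac1\epsilon)^{-1}$ times a polynomial in $\lambda$, so that $\E\bigl[\#\{x\in\Gamma_\epsilon^\GFF(\lambda):\mathcal G(x,\lambda)\}\bigr]\le Ce^{C\lambda}$ uniformly in $\epsilon$. It then remains to show that with probability tending to $1$ \emph{every} $x\in\Gamma_\epsilon^\GFF(\lambda)$ satisfies $\mathcal G(x,\lambda)$; this barrier statement --- morally the same mechanism as the dichotomy in Theorem \ref{thm:near-maxima} --- follows from a union bound over scales using Gaussian tail estimates. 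Markov's inequality then gives $|\Gamma_\epsilon^\GFF(\lambda)|\le Ce^{C\lambda}$ with high probability.

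\textbf{Lower bound.} Here I would run a truncated second-moment argument inside a fixed macroscopic box $B\subseteq\Omega$. Let $\tilde N$ count the sites $x\in B$ with $\Phi^{\GFF_\epsilon}(x)\ge m_\epsilon-\lambda$ whose coarse profile $S_\cdot(x)$ additionally stays below the barrier of the upper-bound argument \emph{with a margin} (an entropic-repulsion constraint that simultaneously keeps $\E\tilde N$ large and decorrelates distant sites). A Gaussian ballot computation gives $\E\tilde N\ge ce^{c\lambda}$, while the margin together with the near-independence of the fine field in well-separated regions yields $\E\tilde N^2\le C(\E\tilde N)^2$: the diagonal contributes $\E\tilde N$, far pairs contribute at most a constant times $(\E\tilde N)^2$, and pairs at intermediate distance are suppressed precisely by the margin. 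Paley--Zygmund then gives $\P(\tilde N\ge\tfrac12\E\tilde N)\ge c_0$ for some $c_0>0$ independent of $\epsilon$ and $\lambda$, hence $|\Gamma_\epsilon^\GFF(\lambda)|\ge ce^{c\lambda}$ with probability at least $c_0$. To push this probability to $1$ as $\lambda\to\infty$, carry the argument out simultaneously in $K=K(\lambda)\to\infty$ disjoint macroscopic boxes; conditionally on the coarse field the corresponding events are essentially independent, so the probability that they all fail is at most $(1-c_0+o(1))^{K}\to0$, and a single success already forces $|\Gamma_\epsilon^\GFF(\lambda)|\ge ce^{c\lambda}$.

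\textbf{Main obstacle.} The real difficulty in both directions is the contribution of pairs of sites at \emph{intermediate} separation $\epsilon\ll|u-v|\ll1$, where the field is neither independent nor perfectly correlated; controlling it requires ballot/entropic-repulsion estimates for the Gaussian coarse profile --- staying below, respectively below with a margin, a barrier over $\asymp\log\tfrac1\epsilon$ scales --- uniformly in $\epsilon$ and in an endpoint constraint of size $\sim\lambda$. Choosing the truncation event $\mathcal G(x,\lambda)$ tight enough that the truncated first moment is $O(e^{C\lambda})$, loose enough that it holds for all high points with probability $\to1$, and at the same time compatible with the near-independence needed for the second moment, is the delicate point; the remaining estimates are routine Gaussian tail and concentration computations.
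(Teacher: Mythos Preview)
The paper does not contain a proof of this statement: Theorem~\ref{thm:size-level-sets} is quoted verbatim from Ding--Zeitouni \cite{MR3262484} as part of the literature review in Section~\ref{sec:existing-results}, and is used thereafter only as a black box (chiefly via its consequence~\eqref{eq:level-set-size-large} and in the proof of Lemma~\ref{lem:inclusions-level-sets}). So there is no ``paper's own proof'' to compare against.

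That said, your sketch is a faithful outline of the strategy actually used in \cite{MR3262484}: a truncated first moment with a barrier/ballot constraint on the multiscale profile for the upper bound, and a truncated second moment plus Paley--Zygmund for the lower bound, with the intermediate-scale pairs controlled by the same barrier mechanism. The one point worth flagging is the bootstrap from a uniformly positive probability to probability tending to $1$ in the lower bound: invoking ``essential independence'' across $K(\lambda)$ macroscopic boxes is delicate for a GFF on the torus (or with Dirichlet data), since the field in disjoint boxes is genuinely correlated through the coarse scales; in \cite{MR3262484} this is handled via a careful Gibbs--Markov decomposition rather than literal independence, and your sketch would need to make that step precise. But for the purposes of the present paper none of this is required---the result is imported, not reproved.
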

The key here is that the bounds on the size of the set do not depend on the regularisation parameter $\epsilon$.
This allows us to record the following immediate consequence of Theorem \ref{thm:size-level-sets} for the later use. For any fixed $\lambda >0$ we have
\begin{equation}
\label{eq:level-set-size-large}
\lim_{M\to \infty} \limsup_{\epsilon \to 0} \P(|\Gamma_\epsilon^\GFF(\lambda)|>M)=0.
\end{equation}
In this work we invoke these results for massive Gaussian free fields on the unit torus with periodic boundary conditions.
We argue in Section \ref{app:size-level-sets} that the statements continue to hold under these somewhat different assumptions. 

After these results were established, the scope shifted to the distribution of the global maximum and related objects.
In \cite{MR3433630}  Bramson, Ding and Zeitouni established convergence in law for the centred GFF maximum to a randomly shifted Gumbel distribution, i.e.\
\begin{equation}
\label{eq:convergence-GFF-max}
\max_{\Omega_\epsilon} \Phi^{\GFF_\epsilon} - m_\epsilon \to \frac{1}{\sqrt{8\pi}}X + \frac{1}{\sqrt{8\pi}} \log \ZDM^\GFF + b
\end{equation} 
in distribution, where $X$ is a standard Gumbel variable, $\ZDM^\GFF$ is a positive random variable and $b$ is a constant.
As a by-product of their proof, they obtain that the random shift $\ZDM^\GFF$ takes a similar form as the so-called derivative martingale in \cite{MR893913}.
Finally, Ding, Roy and Zeitouni generalised in \cite{MR3729618} the convergence \eqref{eq:convergence-GFF-max} to general Gaussian log-correlated fields.
In fact, it is conjectured that the growth of the maximum and the convergence to a randomly shifted Gumbel distribution are universal for a larger class of log-correlated fields, Gaussian or not.
However, since many existing methods rely on the Gaussian nature of the field, only few results are available in the non-Gaussian regime; see for instance \cite{MR3933043}, \cite{MR4164451} and \cite{MR4399156}. 
There is also a substantial literature for the non-Gaussian field of the logarithm of the characteristic polynomial of random matrices; see for example \cite{MR3848227}, \cite{MR3848391} and \cite{MR3594368}.
The surprising relation between the extrema of characteristic polynomials
and log-correlated processes has been pointed out and explored for the first time in \cite{108.170601} and \cite{MR3151088}, thus laying the foundation for subsequent activity in this direction.

Parallel to the progress on the global maximum of the log-correlated Gaussian fields Biskup and Louidor studied the extremal process of the massless GFF with Dirichlet boundary condition in \cite{MR3509015}
and established convergence to a Poisson point process on $[0,1]^2\times \R$. They found that as $\epsilon\to 0$
\begin{equation}
\label{eq:extremal-gff-convergence-to-ppp}
\eta_{r_\epsilon,\epsilon}^\GFF \to \PPP(\ZDM^\GFF(dx)\otimes e^{-\alpha h}\, dh),
\end{equation}
where $\eta_{r_\epsilon,\epsilon}^\GFF$ is defined analogously to \eqref{eq:defn-extremal-process-sg}, but with $\Phi^{\SG_\epsilon}$ replaced by $\Phi^{\GFF_\epsilon}$,  $\ZDM^\GFF(dx)$ is a random measure on $[0,1]^2$ and $\alpha=\sqrt{8 \pi}$ with our scaling.
A main step of this proof is to show that the limiting point process is invariant under the evolution of its points by independent Brownian motions with drift $-\frac{\alpha}{2} t$.
Interestingly, this invariance property manifests itself in the deterministic Gumbel law $e^{-\alpha h}\, dh$ for the height component.

In this article we need a small generalisation of this convergence result to massive GFFs with periodic boundary condition on the unit torus $\Omega$.
Let $\XGFFmseps$ be the centred Gaussian field on $\Omega_\epsilon$ with covariance $(-\Delta^\epsilon + m^2 + 1/s)$.
The proof of the following statement is completely analogous to the one for the massless GFF with Dirichlet boundary condition given in \cite{MR3509015}. We comment on the essential differences in Appendix \ref{app:generalisation-to-massive-gff}.
\begin{theorem}{(Analogue of \cite[Theorem 1.1]{MR3509015})}
\label{thm:ext-process-massive-GFF}
Let $s\in (0,\infty]$, let $\XGFFmseps$ be the discrete GFF on $\Omega_\epsilon$ with mass $m^2+1/s$ and let $\extprocGFFms$ be the associated extremal process.
Then there is a random measure $\ZGFFs$ on $\Omega$ with $\ZGFFs(\Omega)<\infty$ a.s.\ and $\ZGFFs(A) >0$ a.s.\ for every non-empty open $A\subseteq \Omega$,
such that for any sequence $(r_\epsilon)_\epsilon$ with $r_\epsilon\to 0$ and $r_\epsilon/\epsilon \to \infty$,
\begin{equation}
\extprocGFFms \to \PPP(\ZGFFs(dx)\otimes e^{-\scaling h}dh).
\end{equation}
\end{theorem}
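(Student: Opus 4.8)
The plan is to follow \cite{MR3509015} line by line, so we only sketch the argument and record which inputs it uses, the point being that the massive field $\XGFFms$ on the torus satisfies all of them. Biskup and Louidor's proof uses the Gaussian structure only through: (a) the Gibbs--Markov (domain-Markov) property --- conditionally on $\XGFFms$ outside an open $D$, its restriction to $D$ is a massive zero-boundary GFF in $D$ plus the massive-harmonic extension of the boundary data, which on compact subsets of $D$ is a smooth, equicontinuous Gaussian field measurable with respect to the conditioning; (b) $\E[\max_{\Omega_\epsilon}\XGFFms]=m_\epsilon+O(1)$; (c) tightness of $\max_{\Omega_\epsilon}\XGFFms-m_\epsilon$; (d) the level-set estimates of Ding and Zeitouni, i.e.\ the analogues of Theorems \ref{thm:near-maxima} and \ref{thm:size-level-sets}, in particular \eqref{eq:level-set-size-large}; and (e) convergence of $\max_{\Omega_\epsilon}\XGFFms-m_\epsilon$ to a randomly shifted Gumbel law as in \eqref{eq:convergence-GFF-max}. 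Since $\XGFFms$ is a log-correlated Gaussian field with the same short-distance logarithmic behaviour as the unit-mass GFF on $\Omega$ --- the Green's function of $-\Delta^\epsilon+m^2+1/s$ on $\Omega_\epsilon$ equals $\frac{1}{2\pi}\log\frac{1}{|x-y|\vee\epsilon}$ plus a term that is uniformly bounded and H\"older continuous, the mass affecting only this bounded term and the large-scale decay --- properties (b)--(e) follow from the corresponding results for general Gaussian log-correlated fields, in particular from \cite{MR3729618}, \cite{MR3262484} and \cite{MR3433630}, while (a) is standard. The exponent is $\scaling$ rather than $2$ precisely because of this normalisation of the variance.

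Granting (a)--(e), one first proves \emph{tightness} of $(\extprocGFFms)_\epsilon$ in the vague topology on Radon measures on $\Omega\times\R$: for compact $K\subseteq\Omega$ and $\lambda\in\R$, the mass $\extprocGFFms(K\times[-\lambda,\infty))$ is bounded by the number of $r_\epsilon$-local maxima of $\XGFFms$ exceeding $m_\epsilon-\lambda$, hence by $|\Gamma_\epsilon^\GFF(\lambda)|$ for $\XGFFms$, which is tight uniformly in $\epsilon$ by the analogue of \eqref{eq:level-set-size-large}; a diagonal argument over compacts and levels gives tightness of the family. One then \emph{identifies the subsequential limits}: exactly as in \cite{MR3509015}, using the Gibbs--Markov decomposition of $\Omega_\epsilon$ into mesoscopic boxes together with the convergence of the recentred maximum and of the cluster of near-maxima around it, every subsequential limit $\eta$ is shown to be invariant under the Markov dynamics that runs the height of each atom according to an independent Brownian motion with drift $-\scaling t$ while leaving its spatial coordinate fixed. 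By the associated structure theorem (a consequence of Liggett's theorem, as in \cite{MR3509015}), whose hypothesis $\E[\eta(A\times(b,\infty))]\le c\,|A|\,e^{-\scaling b}$ again follows from (d), such an $\eta$ is a Cox process $\PPP(\ZGFFs(dx)\otimes e^{-\scaling h}dh)$ for a random measure $\ZGFFs$ on $\Omega$. Uniqueness of the law of $\ZGFFs$, hence convergence along the full sequence, is obtained as in \cite{MR3509015} by determining the joint law of $(\ZGFFs(B))_B$ over dyadic sub-boxes $B$: for $B=\Omega$ the relation $\E[\exp(-\scaling^{-1}\ZGFFs(\Omega)e^{-\scaling b})]=\lim_{\epsilon\to0}\P(\max_{\Omega_\epsilon}\XGFFms-m_\epsilon\le b)$ from (e) fixes the one-box law, and the Gibbs--Markov restriction propagates this to sub-boxes.

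The stated properties of $\ZGFFs$ then follow. Finiteness: $\P(\eta(\Omega\times(b,\infty))=0)=\E[\exp(-\scaling^{-1}\ZGFFs(\Omega)e^{-\scaling b})]\to1$ as $b\to\infty$ because $\max_{\Omega_\epsilon}\XGFFms-m_\epsilon$ is tight by (c), so $\ZGFFs(\Omega)<\infty$ a.s. Positivity on opens: a non-empty open $A$ contains a small box, and the Gibbs--Markov property together with the matching lower bound on the maximum over a mesoscopic box (which rescales to the Dirichlet statement of \cite{MR3509015} up to a bounded harmonic correction) yields $\lim_{a\to-\infty}\liminf_{\epsilon\to0}\P(\eta(A\times(a,\infty))\neq\emptyset)=1$; since $\P(\eta(A\times(a,\infty))=0)=\E[\exp(-\scaling^{-1}\ZGFFs(A)e^{-\scaling a})]$, this gives $\P(\ZGFFs(A)=0)=0$. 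On the torus the law of $\ZGFFs$ is moreover translation invariant, inherited from $\XGFFms$, although this is not needed here.

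The only place where the torus geometry and the mass genuinely enter --- and the step I expect to need the most care --- is verifying that this Gibbs--Markov decomposition is quantitatively interchangeable with the Dirichlet-box decomposition of \cite{MR3509015}: namely that conditioning $\XGFFms$ on a mesoscopic coarse field leaves, on each small box, an independent massive zero-boundary GFF whose recentred law is within $o(1)$ total variation of the rescaled Dirichlet GFF used in \cite{MR3509015}, while the coarse field is well approximated by a correspondingly scaled GFF. Here the absence of a boundary on $\Omega$ only simplifies matters (and supplies translation invariance), and the extra mass contributes nothing worse than a smooth, uniformly bounded and H\"older correction to the Green's function at short scales and an exponentially small tail at long scales, so that none of the variance, equicontinuity, or decoupling estimates of \cite{MR3509015} deteriorate. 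With this in place, every lemma of \cite{MR3509015} transfers line by line.
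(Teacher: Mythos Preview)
Your proposal is correct and matches the paper's approach: the paper does not give a proof at all, stating only that it ``is completely analogous to the one for the massless GFF with Dirichlet boundary condition given in \cite{MR3509015}.'' Your sketch is a faithful (and more detailed) expansion of precisely that transfer, identifying the inputs (a)--(e) that the Biskup--Louidor argument actually consumes and explaining why the massive torus field supplies them, so there is nothing further to compare.
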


The intensity measure $\ZDM^\GFF(dx)$ in \eqref{eq:extremal-gff-convergence-to-ppp} has also drawn significant attention. 
By verifying that it satisfies a set of characterising properties, Biskup and Louidor showed in \cite{MR4082182} that it is in fact 
(up to a multiplicative constant)
equal to the critical Gaussian multiplicative chaos associated with the field, i.e.\ the $\epsilon \to 0$ limit of the random measures
\begin{equation}
\mu^\epsilon(A) = \epsilon^2 \sum_{x\in A}(\frac{2}{\sqrt{2\pi}} \log \frac{1}{\epsilon} - \Phi^{\GFF_\epsilon}(x) )e^{-2\log \frac{1}{\epsilon} + \scaling \Phi^{\GFF_\epsilon}(x)} \qquad A\subseteq [0,1]^2,
\end{equation}
where again the prefactors differ compared to other references due to our scaling of the field.
Even though it is widely believed that the random shift of the Gumbel distribution or more generally the random part of the intensity measure of the Poisson point process are given by the critical multiplicative chaos associated with the field,
a rigorous proof only exists for the massless GFF thanks to its conformal invariance property.
A thorough discussion of the critical Gaussian multiplicative chaos and its basic properties can be found in \cite{MR3262492} and \cite{MR3395460}. Recent developments in this area are collected in the review \cite{MR4396197} by Powell.

Following their work on the extremal process Biskup and Louidor went on to discuss the full extremal process of the massless GFF, which also takes into account the shape of the field around local maxima. 
Since each high value of the field typically comes with an entire cluster of comparable values,  
a natural extension of the local extremal process is
\begin{equation}
\bar \eta_{r_\epsilon, \epsilon}^\GFF = \sum_{x\in \Theta_{r_\epsilon}^\GFF} \delta_x\otimes\delta_{\Phi^{\GFF_\epsilon}(x) - m_\epsilon} \otimes \delta_{\{\Phi^{\GFF_\epsilon}(x) - \Phi^{\GFF_\epsilon} (x+\epsilon z) \colon z \in \Z^2\}},
\end{equation}  
which is a Radon measure on $[0,1]^2 \times \R \times \R^{\Z^2}$. 
In \cite{MR3787554} they proved that there is a probability measure $\nu$ on $[0,\infty)^{\Z^2}$,
such that under the same assumptions on the sequence $r_\epsilon$,
\begin{equation}
\label{eq:full-extremal-convergence}
\bar \eta_{r_\epsilon,\epsilon}^\GFF \to \PPP(\ZDM^\GFF(dx) \otimes e^{-\alpha h} \,dh\otimes \nu (d \phi))
\end{equation}
as $\epsilon \to 0$ with respect to the topology of vague convergence on the space of point measures on $[0,1]^2\times \R \times \R^{\Z^2}$.
This means that the correlation of the spatial positions of local maxima is described by the measure $\ZDM^\GFF(dx)$,
while in the limit the configurations around each local maximum are independent samples from $\nu$. 
The limiting process in \eqref{eq:full-extremal-convergence} is therefore often referred to as a randomly shifted Gumbel process decorated by independent and identically distributed clusters.
A similar result is also known for Branching Brownian motion (BBM), see the works of Arguin, Bovier and Kistler \cite{MR3129797}, A{\"\i}d{\'e}kon, Berestycki, Brunet and Shi \cite{MR3101852}, and Bovier and Hartung \cite{MR3164771},
but not for other log-correlated fields.
It would be an interesting problem to understand how the measure $\nu$ depends on the underlying spatial structure.
A collection of more open question in this area can be found in \cite[Section 2.5]{MR3787554}.

\subsection{Construction of the sine-Gordon field and coupling with the GFF}
\label{sec:coupling}
In this section we recall the coupling between the sine-Gordon field and the Gaussian free field from \cite{MR4399156}.
This coupling provides an important tool to the analysis of the sine-Gordon field,
which in particular allows to transfer results from the extremal process associated with the Gaussian free field to the one for the sine-Gordon field.

We first recall the notion of the decomposed GFF on $\Omega_\epsilon$, which is defined as follows:
let $(W^\epsilon(x))_{x\in\Omega_\epsilon}$ be independent Brownian motions $W^\epsilon(x)=(W^\epsilon_t(x))_{t\geq 0}$
with quadratic variations $t/\epsilon^2$.
In other words, $W^\epsilon$ is a standard Brownian motion with values in
$\R^{\Omega_\epsilon}$ equipped with the inner product 
\begin{equation} \label{e:innerprod}
\avg{f,g} \equiv \avg{f,g}_{\Omega_\epsilon} \equiv \epsilon^2 \sum_{x \in \Omega_\epsilon} f(x)\bar g(x).
\end{equation}

Moreover, we define the discrete heat kernel $\dot c_t^\epsilon$  on $\Omega_\epsilon$ with periodic boundary condition by
\begin{equation}
\label{e:dotcteps}
\dot c_t^\epsilon = e^{t \Delta^\epsilon} e^{-m^2 t},
\qquad
c_t^\epsilon = \int_0^t \dot c_s^\epsilon \, ds,
\end{equation}
where $\Delta^\epsilon$ is the discretised Laplacian acting on $\R^{\Omega_\epsilon}$ as defined below \eqref{eq:DefinitionSGMeasure}.
The relation to its equivalent with Dirichlet boundary condition and its continuum version are summarised in \cite[Appendix]{MR4399156}.
Then we define the decomposed  GFF by
\begin{equation}
\label{e:GFFepsdef}
\Phi_t^{\GFF_\epsilon} = \int_t^\infty q^\epsilon_{u}\, dW_{u}^\epsilon,
\qquad \text{where }
q_t^\epsilon
= \dot c_{t/2}^\epsilon.
\end{equation}
In particular, $\Phi_0^{\GFF_\epsilon}$ is a realisation of the massive Gaussian free field on $\Omega_\epsilon$, i.e.\ a Gaussian field with covariance
$\int_0^\infty \dot c_t^{\epsilon}\, dt = (-\Delta^\epsilon+m^2)^{-1}$.
The parameter $t$ can be thought of as a scale parameter parameter: since the heat kernel $\dot c_t^\epsilon(x,y)$ is essentially supported on $x,y \in \Omega_\epsilon$ with $|x-y|$ of order $L_t \coloneqq \sqrt{t} \wedge 1/m$, the field $\Phi_t^\GFF$ is typically smooth on scale $L_t$.
The construction as stochastic integral with respect to independent Brownian motions then naturally results in in\-de\-pen\-dence between small scales $\Phi_0^{\GFF_\epsilon}-\Phi_t^{\GFF_\epsilon}$ and large scales $\Phi_t^{\GFF_\epsilon}$.

A similar decomposition can be obtained for the sine-Gordon field from a stochastic differential equation; see Theorem \ref{thm:coupling} below.
To this end, we define for $z\in\R$ the microscopic potential $v_0^\epsilon$ of the sine-Gordon field for $\varphi = (\varphi(x))_{x\in\Omega_\epsilon} \in \R^{\Omega_\epsilon}$ by
\begin{equation}
v_0^\epsilon(\varphi)
= \epsilon^{2} \sum_{x\in \Omega_\epsilon} 2z\epsilon^{-\beta/4\pi} \cos(\sqrt{\beta}\varphi(x)).
\end{equation}
Then, for $t>0$, we define the renormalised potential $v_t^\epsilon\colon \R^{\Omega_\epsilon} \to \R$ via 
\begin{equation} \label{e:vdef-conv}
  e^{-v_t^\epsilon(\varphi)}
  = \EE_{c_t^\epsilon} \pB{ e^{-v_0^\epsilon(\varphi+\zeta)} },
\end{equation}
where $\EE_{c}$ denotes the expectation of the centred Gaussian measure with covariance $c$ and the expectation is over the field $\zeta$.
Equivalently to \eqref{e:vdef-conv}, $v_t^\epsilon$ is the unique
solution to the Polchinski equation
\begin{equation}
\label{eq:PolchinskiSDE-v}
\partial_t v_t^\epsilon
  = \frac12 \Delta_{\dot c_t^\epsilon} v_t^\epsilon - \frac12 (\nabla v_t^\epsilon)_{\dot c_t^\epsilon}^2
  = \frac12 \epsilon^4\sum_{x,y \in \Omega_\epsilon} \dot c_t^\epsilon(x,y)
\qa{\ddp{^2v_t^\epsilon}{\varphi(x)\partial\varphi(y)}
-
\ddp{v_t^\epsilon}{\varphi(x)}
\ddp{v_t^\epsilon}{\varphi(y)}
}
\end{equation}
with initial condition $v_0^\epsilon$
where the matrix element $\dot c_t^\epsilon(x,y)$ associated to $\dot c_t^\epsilon$ is normalised such that
\begin{equation}
\label{e:dotcteps-sum}
  \dot c_t^\epsilon f(x) = \epsilon^2 \sum_{y\in \Omega_\epsilon} \dot c_t^\epsilon(x,y) f(y).
\end{equation}

Both representations of $v_t^\epsilon$, that as a solution to the Polchinski equation \eqref{eq:PolchinskiSDE-v},
and that in terms of Gaussian convolution \eqref{e:vdef-conv}, are useful and led to a number of remarkable results for the sine-Gordon field. 
In \cite{MR4303014} Bauerschmidt and Bodineau used a multidimensional Fourier representation for $v_t^\epsilon$,
initially developed by Brydges and Kennedy in \cite{MR914427},
to established bounds on the the renormalised potential and its derivatives and prove a log-Sobolev inequality for the continuum sine-Gordon field.
In fact, these results also enter the proofs of the following four theorems; see also \cite{MR4399156}.
Here and henceforth, we assume that all fields $\Phi_t^{\GFF_\epsilon}$ are constructed with the same cylindrical Brownian motion $(W_t)_{t\geq 0}$ on $L^2(\Omega)$,
which can be expressed as
\begin{equation}
\label{eq:cylindrical-BM}
W_t = \sum_{k \in 2\pi \Z^2} e^{ik\cdot (\cdot)} \hat W_t(k),
\end{equation}
where the $\hat W(k)$ are independent complex standard Brownian motions subject to $\hat W(k)=\overline{\hat W(-k)}$ for $k \neq 0$
and $\hat W(0)$ is a real standard Brownian motion,
and where the sum over $k \in 2\pi\Z^2$ converges in $C([0,\infty),H^{-1-\kappa}(\Omega))$ for any $\kappa>0$.
This gives rise to a coupling of all fields $\Phi_t^{\GFF_\epsilon}$ (and therefore also of $\Phi_t^{\SG_\epsilon}$ below) simultaneously for all $t\geq 0$ and all $\epsilon\geq 0$,
which we use from now on. Note that we have for $\epsilon=0$
\begin{equation}
\label{e:GFF0def}
  \Phi_t^{\GFF_0} = \int_t^\infty q_u^0 dW_{u} \equiv
  \sum_{k\in2\pi \Z^2}  e^{ik\cdot(\cdot)}\int_t^\infty \hat q_u^0(k) d\hat W_u(k),
  \qquad 
  q_t^0
  = e^{-t(-\Delta + m^2)/2},
\end{equation}
where $q_u^0(k)$ denotes the $k$-th Fourier coefficient of $ q_u^0$. For more details on this construction, we refer to \cite[Section 3.1]{MR4399156}.

In the following statements, $C_0([0,\infty), C(\Omega))$ denotes the space of continuous processes with values in the space of continuous functions $C(\Omega)$ that vanish at infinity. Moreover, we denote by $\Omega_\epsilon^*= \{k\in 2\pi \Z^2\colon -\pi/\epsilon < k_i\leq \pi/\epsilon\}$ and $\Omega^*=2\pi \Z^2$ the Fourier dual spaces of $\Omega_\epsilon$ and $\Omega$.
\begin{theorem} 
\label{thm:coupling}
Let $(\cF^t)_{t\geq 0}$ be the filtration generated by the Brownian motion $(W_t)_{t\geq 0}$ in \eqref{eq:cylindrical-BM}.
  Then, for $\epsilon>0$, there is a unique $\cF^t$-adapted process $\Phi^{\SG_\epsilon} \in C([0,\infty), \R^{\Omega_\epsilon})$ such that for any $t\geq 0$
\begin{equation}
\label{e:Phi-SG-eps-coupling}
\Phi_{t}^{\SG_\epsilon}
    = - \int_t^\infty \dot c^\epsilon_u \nabla v_{u}^\epsilon(\Phi_u^{\SG_\epsilon}) \, du
+ \Phi_t^{\GFF_\epsilon}.
\end{equation}
Analogously, there is a unique $\cF^t$-adapted process $\Phi^{\SG_0}$
with $\Phi^{\SG_0}-\Phi^{\GFF_0}\in C_0([0,\infty), C(\Omega))$ such that
\begin{equation}
\label{e:Phi-SG-0-coupling}
\Phi_{t}^{\SG_0} = - \int_t^\infty \dot c^0_u \nabla v_{u}^0(\Phi_u^{\SG_0}) \, du
+ \Phi_t^{\GFF_0}.
\end{equation}  
In particular, for any $t>0$, $\Phi^{\GFF_\epsilon}_0-\Phi_t^{\GFF_\epsilon}$ is independent of $\Phi_t^{\SG_\epsilon}$ for both $\epsilon>0$ and $\epsilon=0$.
\end{theorem}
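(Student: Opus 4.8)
The plan is to treat \eqref{e:Phi-SG-eps-coupling} as a fixed-point problem for the process $\Psi_t := \Phi^{\SG_\epsilon}_t - \Phi^{\GFF_\epsilon}_t$, which by \eqref{e:Phi-SG-eps-coupling} should satisfy $\Psi_t = -\int_t^\infty \dot c^\epsilon_u \nabla v^\epsilon_u(\Psi_u + \Phi^{\GFF_\epsilon}_u)\,du$, an integral equation with no stochastic integral in it — for each realisation of the Brownian paths it is an ODE-type equation in the Banach space $C([0,\infty), X_\epsilon)$ (or rather, since the integral runs to $+\infty$, one should first solve it on a large scale $t \in [T,\infty)$ where $v^\epsilon_u$ is tiny and then extend down to $t=0$). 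First I would record the quantitative bounds on $\nabla v^\epsilon_u$ and its derivative $\nabla^2 v^\epsilon_u$ coming from the Bauerschmidt--Bodineau estimates cited before the theorem: in the regime $\beta < 6\pi$ one has $\|\dot c^\epsilon_u \nabla v^\epsilon_u(\varphi)\|$ bounded by something integrable in $u$ near $\infty$ (decaying like $z$ times a power of $L_u^{-1}$ weighted against the heat kernel, uniformly in $\epsilon$), and a Lipschitz estimate $\|\dot c^\epsilon_u(\nabla v^\epsilon_u(\varphi) - \nabla v^\epsilon_u(\varphi'))\| \le K_u \|\varphi - \varphi'\|$ with $\int_0^\infty K_u\,du$ finite (this is exactly where $\beta<6\pi$, not just $\beta<8\pi$, is used). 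Given these, existence and uniqueness of the $\cF^t$-adapted solution follows from a Banach fixed-point / Picard iteration argument on the contraction $\Gamma(\Psi)_t = -\int_t^\infty \dot c^\epsilon_u\nabla v^\epsilon_u(\Psi_u+\Phi^{\GFF_\epsilon}_u)\,du$ on the space of continuous adapted processes with the sup-in-$t$ norm (weighting the norm by an exponential in $t$ if necessary to get strict contraction), noting that $\Gamma(\Psi)$ is automatically adapted because $\Phi^{\GFF_\epsilon}_u$ is $\cF^u\subseteq\cF^\infty$-measurable and the integral from $t$ to $\infty$ of an adapted integrand is $\cF^t$-measurable once we know $\Psi$ on $[t,\infty)$.

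The continuum case \eqref{e:Phi-SG-0-coupling} I would handle by essentially the same fixed-point argument, now in the space $C_0([0,\infty), C(\Omega))$ of continuous-in-$t$ paths valued in $C(\Omega)$ that vanish at $t=\infty$; here one needs the analogous bounds on $\dot c^0_u \nabla v^0_u$ and its Lipschitz constant in the continuum (again from \cite{1907.12308}, or by passing $\epsilon\downarrow 0$ in the lattice bounds), plus the regularity statement that $\dot c^0_u$ maps into $C(\Omega)$ with the right modulus of continuity so that the fixed point genuinely lives in $C(\Omega)$ rather than merely $H^{-\kappa}$ — this is the point of subtracting $\Phi^{\GFF_0}_t$, since $\Phi^{\GFF_0}$ itself is only a distribution but the correction term $\Phi^{\SG_0}-\Phi^{\GFF_0}$ is a genuine continuous function. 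The consistency of the two constructions (that the lattice solution converges to the continuum one) is not asserted in the statement, so I would not need it here.

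The final independence claim is then immediate from the structure of the solution: by construction $\Phi^{\SG_\epsilon}_t$ is built from $\Phi^{\GFF_\epsilon}_t = \int_t^\infty q^\epsilon_u\,dW^\epsilon_u$ and from the fixed-point equation whose integrand at scale $u\ge t$ depends only on $\Phi^{\SG_\epsilon}_u$, so by a downward induction / the measurability of the fixed point one gets that $\Phi^{\SG_\epsilon}_t$ is measurable with respect to $\sigma(W^\epsilon_v - W^\epsilon_u : t \le u \le v) = \sigma(\Phi^{\GFF_\epsilon}_u : u \ge t)$, the tail filtration $\cF_{\ge t}$ generated by the increments of $W^\epsilon$ above scale $t$. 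On the other hand $\Phi^{\GFF_\epsilon}_0 - \Phi^{\GFF_\epsilon}_t = \int_0^t q^\epsilon_u\,dW^\epsilon_u$ is measurable with respect to the increments of $W^\epsilon$ below scale $t$, and these two families of increments are independent because $W^\epsilon$ has independent increments; hence $\Phi^{\GFF_\epsilon}_0 - \Phi^{\GFF_\epsilon}_t \perp \Phi^{\SG_\epsilon}_t$, and identically in the continuum case.

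\textbf{Main obstacle.} The crux is establishing the uniform-in-$\epsilon$ (and $\epsilon=0$) bounds on $\dot c^\epsilon_u \nabla v^\epsilon_u$ and its Lipschitz constant that are summable against $du$ on $[0,\infty)$ — in particular controlling the behaviour as $u\downarrow 0$, where $v^\epsilon_u$ is close to the rough microscopic potential $v^\epsilon_0$ with its $\epsilon^{-\beta/4\pi}$ prefactor, and where integrability forces the restriction $\beta<6\pi$. This is where the heavy lifting from \cite{1907.12308} (the Brydges--Kennedy--Bauerschmidt--Bodineau Fourier/renormalisation-group bounds) enters, and assembling those into a clean contraction estimate — rather than the fixed-point argument itself, which is then routine — is the real content.
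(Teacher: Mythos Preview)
The paper does not contain a proof of this theorem: it is quoted verbatim as \cite[Theorem~3.1]{BH2020Max} and used as input, so there is no ``paper's own proof'' to compare your proposal against. Your outline (recast \eqref{e:Phi-SG-eps-coupling} as a pathwise fixed-point equation for the difference $\Psi_t=\Phi^{\SG_\epsilon}_t-\Phi^{\GFF_\epsilon}_t$, feed in the Bauerschmidt--Bodineau bounds on $\dot c_u\nabla v_u$ and its Lipschitz constant to run a Picard iteration, then read off the independence from the measurability structure of the increments of $W^\epsilon$) is essentially the argument carried out in \cite{BH2020Max}, and your identification of the integrability of the Lipschitz constant near $u\downarrow 0$ as the place where $\beta<6\pi$ enters is correct.

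One small point worth tightening: the filtration $(\cF^t)_{t\ge 0}$ here is the \emph{backward} filtration generated by $(W_u)_{u\ge t}$, so ``$\cF^t$-adapted'' means $\Phi^{\SG_\epsilon}_t$ is $\cF^t$-measurable with $\cF^t\supseteq \cF^s$ for $t\le s$. Your independence paragraph gets this right in substance, but when you write ``$\Gamma(\Psi)$ is automatically adapted because $\Phi^{\GFF_\epsilon}_u$ is $\cF^u\subseteq\cF^\infty$-measurable'' the inclusion should be $\cF^u\subseteq\cF^t$ for $u\ge t$; make sure the contraction is set up on the space of processes adapted to this decreasing filtration so that the fixed point inherits $\cF^t$-measurability cleanly.
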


We refer to the process $\Phi^{\SG_\epsilon}$ as the decomposed sine-Gordon field. Using It\^o's formula and the fact that the renormalised potential $v_t^\epsilon$ satisfies \eqref{eq:PolchinskiSDE-v}, one can prove that the distribution of $\Phi_0^{\SG_\epsilon}$ is indeed given by \eqref{eq:DefinitionSGMeasure}. 

\begin{theorem}
\label{thm:coupling-sg}
Let $\epsilon>0$. Then $\Phi_0^{\SG_\epsilon}$ is distributed as the sine-Gordon field on $\Omega_\epsilon$ defined in \eqref{eq:DefinitionSGMeasure}.
\end{theorem}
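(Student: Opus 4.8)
The plan is to verify that the coupling constructed in Theorem~\ref{thm:coupling} produces, at time $t=0$, exactly the measure $\nu^{\SG_\epsilon}$ by exhibiting the correct Radon–Nikodym density with respect to $\nu^{\GFF_\epsilon}$. Since $\epsilon>0$ is fixed and $X_\epsilon=\R^{\Omega_\epsilon}$ is finite-dimensional, all objects are genuine functions and SDEs, so there are no regularity subtleties. The key observation is that $\nu^{\SG_\epsilon}(d\varphi) = \frac{1}{Z_\epsilon} e^{-v_0^\epsilon(\varphi)} \nu^{\GFF_\epsilon}(d\varphi)$ up to the Gaussian normalisation, and that $e^{-v_0^\epsilon(\varphi+\zeta)}$ averaged over the scales of the decomposition $\Phi^{\GFF_\epsilon}$ produces precisely $e^{-v_t^\epsilon}$ via \eqref{e:vdef-conv}.

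First I would fix $t>0$ and condition on $\cF^t$. Recall from \eqref{e:GFFepsdef}–\eqref{e:GFFepsdef} that $\Phi_0^{\GFF_\epsilon} = \Phi_t^{\GFF_\epsilon} + (\Phi_0^{\GFF_\epsilon}-\Phi_t^{\GFF_\epsilon})$, where the increment $\Phi_0^{\GFF_\epsilon}-\Phi_t^{\GFF_\epsilon}$ is a centred Gaussian with covariance $\int_0^t \dot c_s^\epsilon\,ds = c_t^\epsilon$, independent of $\cF^t$; similarly $\Phi_0^{\SG_\epsilon} = -\int_0^t \dot c_u^\epsilon \nabla v_u^\epsilon(\Phi_u^{\SG_\epsilon})\,du + \Phi_0^{\GFF_\epsilon}$. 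The strategy is to compute, for a bounded test function $F$ on $\R^{\Omega_\epsilon}$, the expectation $\E[F(\Phi_0^{\SG_\epsilon})]$ by first integrating out the small scales $\Phi_0^{\GFF_\epsilon}-\Phi_t^{\GFF_\epsilon}$. Using It\^o's formula applied to $e^{-v_s^\epsilon(\Phi_s^{\SG_\epsilon})}$ along the SDE \eqref{e:Phi-SG-eps-coupling}, the drift terms coming from $\partial_s v_s^\epsilon$ cancel against the second-order It\^o term exactly because $v_s^\epsilon$ solves the Polchinski equation \eqref{eq:PolchinskiSDE-v}; this identifies $e^{-v_s^\epsilon(\Phi_s^{\SG_\epsilon})} / e^{-v_\infty^\epsilon(\Phi_\infty^{\SG_\epsilon})}$, or rather the appropriate ratio with $\Phi_t^{\GFF_\epsilon}=0$ at $s=\infty$, as a positive $\cF^s$-martingale (a Girsanov density). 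Concretely, $M_t^\epsilon := \exp\!\big(-v_t^\epsilon(\Phi_t^{\SG_\epsilon}) + v_\infty^\epsilon(0)\big) \cdot (\text{Girsanov exponential})$ has expectation one, and the law of $\Phi_0^{\SG_\epsilon}$ is obtained by tilting the law of $\Phi_0^{\GFF_\epsilon}$ by this martingale at $t=0$, i.e.\ by $\frac{1}{Z_\epsilon}e^{-v_0^\epsilon(\cdot)}$.

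Then I would combine this with the defining identity \eqref{e:vdef-conv}: sending $t\to 0^+$, one has $v_t^\epsilon \to v_0^\epsilon$ and the tilting density becomes $e^{-v_0^\epsilon(\varphi)}/Z_\epsilon$ with $Z_\epsilon = \EE_{(-\Delta^\epsilon+m^2)^{-1}}[e^{-v_0^\epsilon}]$, which is exactly the normalisation in \eqref{eq:DefinitionSGMeasure} (after expanding $v_0^\epsilon$ and comparing with the $\cos$-term, noting that the quadratic and Laplacian parts are absorbed into $\nu^{\GFF_\epsilon}$). Matching the density $\frac{1}{Z_\epsilon}e^{-v_0^\epsilon(\varphi)}\nu^{\GFF_\epsilon}(d\varphi)$ against \eqref{eq:DefinitionSGMeasure} term by term finishes the proof. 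Alternatively, and perhaps more cleanly, one can avoid the $t\to 0$ limit entirely: show directly by It\^o that for every $t\ge 0$ the process $s\mapsto e^{-v_s^\epsilon(\Phi_s^{\SG_\epsilon})}\mathcal{E}_s$ (with $\mathcal{E}_s$ the stochastic exponential generated by the drift) is a martingale on $[t,\infty)$, deduce that $\E[G(\Phi_t^{\SG_\epsilon})] = \E[G(\Phi_t^{\GFF_\epsilon})\,e^{-v_t^\epsilon(\Phi_t^{\GFF_\epsilon})}]/Z_\epsilon$ for all bounded $G$, and then use the convolution identity \eqref{e:vdef-conv} together with the independence in Theorem~\ref{thm:coupling} to integrate over the remaining scales $[0,t]$ and conclude the case $t=0$.

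The main obstacle is the It\^o/Girsanov bookkeeping: one must check that the stochastic integrals are genuine martingales (not merely local martingales) — this uses boundedness of $v_t^\epsilon$ and its derivatives for fixed $\epsilon$, available from the finite-dimensionality of $\R^{\Omega_\epsilon}$ or from the bounds of \cite{1907.12308} — and one must carefully track the normalising constants so that the drift from Girsanov's theorem exactly matches $-\int_t^\infty \dot c_u^\epsilon \nabla v_u^\epsilon(\Phi_u^{\SG_\epsilon})\,du$ in \eqref{e:Phi-SG-eps-coupling} and the second-order correction produced by It\^o matches the $-\frac12(\nabla v_t^\epsilon)^2_{\dot c_t^\epsilon}$ term in \eqref{eq:PolchinskiSDE-v}. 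Once this algebraic matching is in place, the identification of the law is immediate.
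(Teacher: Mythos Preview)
Your proposal is correct and follows the approach the paper indicates: It\^o's formula together with the Polchinski equation \eqref{eq:PolchinskiSDE-v} makes $e^{-v_t^\epsilon(\Phi_t^{\SG_\epsilon})}$ (up to normalisation) a Girsanov martingale, which identifies the law of $\Phi_0^{\SG_\epsilon}$ as the tilt of $\nu^{\GFF_\epsilon}$ by $e^{-v_0^\epsilon}$, i.e.\ as $\nu^{\SG_\epsilon}$. The paper itself only sketches this argument in the sentence preceding the statement and defers the details to \cite{BH2020Max}, so your write-up is more explicit than what appears here but not different in substance.
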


These two results establish a coupling between the sine-Gordon field and the Gaussian free field  simultaneously for all scales $t\geq 0$ and all regularisations $\epsilon > 0$.
This can even be extended to $\epsilon=0$ thanks to the convergence of the gradient of the renormalised potential $\dot c_t^\epsilon \nabla v_t^\epsilon$ as $\epsilon \to 0$;
see \cite[Section 2]{MR4399156} for more details.
Thus, when it is clear from the context, we drop from now on the regularisation parameter $\epsilon$ from the notation.

Note that unlike as for the decomposed GFF, there is no exact independence for the decomposed sine-Gordon field.
Instead, we have independence between the large scales of the sine-Gordon field $\Phi_t^\SG$ and the small scales of the GFF $\Phi_0^\GFF - \Phi_t^\GFF$.  

For $\beta < 6\pi$ the bounds on the gradient of the renormalised potential in \cite{MR4303014} allow to establish remarkable results for the difference field $\Phi_t^{\Delta} \coloneqq \Phi_t^\SG-\Phi_t^\GFF$ for $t\geq 0$, which are collected in the following statements. 

\begin{theorem} \label{thm:Phi-limit}
Under the coupling described above of Theorem~ \ref{thm:coupling}, we have for any $t_0>0$
\begin{equation}
\label{e:Phi-Delta-limit}
\E \sup_{t\geq t_0}\norm{\Phi_t^{\Delta_\epsilon}- \Phi^{\Delta_0}_t}_{L^\infty(\Omega_\epsilon)} \to 0
    \quad \text{as $\epsilon \downarrow 0$.}
\end{equation}
In particular, for any $t\geq 0$, the lattice field $\Phi^{\SG_\epsilon}_t$ converges weakly to $\Phi^{\SG_0}_t$ in $H^{-\kappa}(\Omega)$ as $\epsilon \downarrow 0$ when $\kappa>0$,
where we have identified $\Phi^{\SG_\epsilon}_t$ with the element of $C^\infty(\Omega)$ with the same Fourier coefficients for $k\in \Omega_\epsilon^*$
and vanishing Fourier coefficients for $k\in\Omega^*\setminus\Omega_\epsilon^*$.
\end{theorem}

\begin{theorem}
\label{thm:coupling-bd}
The following estimates hold for every field configuration $\Phi^{\Delta_\epsilon} \equiv \Phi^{\Delta}$ for $\epsilon>0$ or $\epsilon=0$ with all constants deterministic and independent of $\epsilon$.
For any $t\geq 0$, the difference field $\Phi^\Delta$ satisfies the bound
\begin{equation}
\label{e:PhiDelta-bd0t}
\max_x |\Phi^{\Delta}_0(x)-\Phi^{\Delta}_t(x)|
\leq
O_\beta(|z| L_t^{2-\beta/4\pi}), \qquad L_t= \sqrt{t} \wedge \frac{1}{m},
\end{equation}
as well as the following H\"older continuity estimates:
\begin{align}
\max_{x} |\Phi_t^{\Delta}(x)|+ \max_{x} |\partial \Phi_t^{\Delta}(x)| +        
\max_{x,y}\frac{|\partial \Phi_t^\Delta(x)-\partial \Phi_t^\Delta(y)|}{|x-y|^{1-\beta/4\pi}}
&\leq O_\beta(|z|),
&& (0<\beta<4\pi),
\nnb
\label{e:PhiDelta-bd}
\max_{x} |\Phi_t^{\Delta}(x)|+ \max_{x,y}
\frac{|\Phi_t^\Delta(x)-\Phi_t^\Delta(y)|}{|x-y| (1+|\log|x-y||)}
&\leq O_\beta(|z|) ,
&& (\beta=4\pi),
 \\
\max_{x} |\Phi_t^{\Delta}(x)|+ \max_{x,y}
\frac{|\Phi_t^\Delta(x)-\Phi_t^\Delta(y)|}{|x-y|^{2-\beta/4\pi}}
&\leq O_\beta(|z|) ,
&& (4\pi\leq \beta<6\pi).
\nonumber
\end{align}
In addition, for any $t>0$ and $k \in \N$, the following cruder bounds on all derivatives hold:
\begin{equation}
\label{e:delPhiDelta-bd}
L_t^k\|\partial^k\Phi^\Delta_t\|_{L^\infty(\Omega)}
\leq O_{\beta,k}(|z|),
\end{equation}
and in particular $\Phi^\Delta_t \in C^\infty(\Omega)$ for any $t>0$.
\end{theorem}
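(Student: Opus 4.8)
The plan is to derive everything from the closed form of the difference field obtained from Theorem~\ref{thm:coupling}: subtracting $\Phi_t^{\GFF}$ from both sides of \eqref{e:Phi-SG-eps-coupling} and of \eqref{e:Phi-SG-0-coupling} (both driven by the same Brownian motion) gives, for $\epsilon>0$ or $\epsilon=0$,
\begin{equation}
\label{e:PhiDelta-rep-plan}
\Phi_t^{\Delta} = \Phi_t^{\SG} - \Phi_t^{\GFF} = -\int_t^\infty \dot c_u^\epsilon \,\nabla v_u^\epsilon(\Phi_u^{\SG})\, du ,
\end{equation}
with the continuum heat kernel $\dot c_u^0$ and potential $v_u^0$ understood when $\epsilon=0$. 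The whole theorem thus becomes a statement about the ``force'' $\dot c_u^\epsilon\nabla v_u^\epsilon(\varphi)$ and its spatial derivatives. The essential structural feature to exploit is that the renormalised-potential bounds of \cite{1907.12308} are \emph{uniform in the field argument $\varphi$} (and in $\epsilon$); hence, although $\Phi_u^{\SG}$ inside \eqref{e:PhiDelta-rep-plan} is random, every bound these produce for $\Phi^\Delta$ is deterministic and $\epsilon$-independent, as asserted.

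The input I would invoke, valid for $0<\beta<6\pi$, is the following: a bound $\|\dot c_u^\epsilon\nabla v_u^\epsilon(\varphi)\|_{L^\infty(\Omega_\epsilon)}\leq O_\beta(|z|)\,\omega_0(u)$ with $\omega_0$ integrable on $(0,\infty)$; when $\beta<4\pi$, matching bounds on the first spatial derivative and on the H\"older-$(1-\beta/4\pi)$ seminorm of the force, whose $L_u$-weighted versions are again integrable in $u$; for all $\beta<6\pi$, a bound on the H\"older-$(2-\beta/4\pi)$ seminorm of the force (with the borderline logarithmic modulus at $\beta=4\pi$) that is integrable in $u$; and, for $u>0$, the cruder estimates $L_u^k\|\partial^k \dot c_u^\epsilon\nabla v_u^\epsilon(\varphi)\|_{L^\infty(\Omega_\epsilon)}\leq O_{\beta,k}(|z|)\,\omega_k(u)$ with each $\omega_k$ integrable near $\infty$ and locally integrable on $(0,\infty)$. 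Granting these, the displayed estimates follow by substitution into \eqref{e:PhiDelta-rep-plan}. For \eqref{e:PhiDelta-bd0t} write $\Phi_0^\Delta(x)-\Phi_t^\Delta(x) = -\int_0^t (\dot c_u^\epsilon\nabla v_u^\epsilon(\Phi_u^{\SG}))(x)\,du$, take $\max_x$, bound the integrand by the $L^\infty$ norm of the force, and integrate, recognising the resulting quantity as $O_\beta(|\lZ_t|)$ by the definition of $\lZ_t$. Setting $t=0$ in \eqref{e:PhiDelta-rep-plan} gives \eqref{e:PhiDelta-bd}: the $L^\infty$ bound on $\Phi^\Delta$ is $O_\beta(|z|)\int_0^\infty\omega_0(u)\,du$; for $\beta<4\pi$ one differentiates under the integral and integrates the force bounds to obtain the gradient bound and the H\"older bound on $\partial\Phi^\Delta$; for $\beta\geq 4\pi$ the gradient of the force is no longer integrable in $u$, but integrating the H\"older-seminorm bound on the force itself produces the stated modulus of continuity of $\Phi^\Delta$. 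Finally, for $t>0$ the integral in \eqref{e:PhiDelta-rep-plan} runs over $u\geq t$, where $\dot c_u^\epsilon$ is smooth with all derivatives controlled by negative powers of $L_u$, which is at least $L_t$; one may therefore differentiate under the integral arbitrarily often, and integrating $L_u^k\|\partial^k\dot c_u^\epsilon\nabla v_u^\epsilon\|_{L^\infty(\Omega_\epsilon)}\leq O_{\beta,k}(|z|)\,\omega_k(u)$ over $u\geq t$ yields \eqref{e:delPhiDelta-bd} and, in particular, $\Phi_t^\Delta\in C^\infty(\Omega)$. The case $\epsilon=0$ is obtained identically with the continuum objects, or by passing to the $\epsilon\to 0$ limit in the lattice bounds using the convergence of $\dot c_u^\epsilon\nabla v_u^\epsilon$ from \cite[Section~2]{BH2020Max} and the $\epsilon$-uniformity of all constants.

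The only genuine obstacle is the input estimates of the previous paragraph --- the uniform-in-$\varphi$, $\epsilon$-independent control of the force and its spatial derivatives across all scales $u$, together with the right integrability in $u$; in particular it is the behaviour of these bounds at small scales $u\downarrow 0$ that restricts us to $\beta<6\pi$. This is the substance of the Bauerschmidt--Bodineau analysis in \cite{1907.12308}; once it is in hand, the derivation above is routine Fubini and differentiation under the integral sign.
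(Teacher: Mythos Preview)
The paper does not give a proof of this theorem: it is quoted in Section~\ref{sec:coupling} as a result from \cite{BH2020Max}, which in turn rests on the renormalised-potential estimates of \cite{1907.12308}. There is therefore no ``paper's own proof'' to compare against here.

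That said, your sketch is exactly the argument carried out in \cite{BH2020Max}: one writes $\Phi_t^\Delta$ via the integral representation \eqref{e:PhiDelta-rep-plan}, feeds in the uniform-in-$\varphi$ and $\epsilon$-independent bounds on $\dot c_u\nabla v_u$ and its spatial derivatives from \cite{1907.12308}, and integrates in $u$. The identification of the small-$u$ integrability as the source of the $\beta<6\pi$ restriction is also correct. One small slip: the H\"older estimates \eqref{e:PhiDelta-bd} are stated for $\Phi_t^\Delta$ at every $t\geq 0$, not only $t=0$; your argument covers this just as well by integrating over $[t,\infty)$ rather than $[0,\infty)$, so nothing changes.
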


As an immediate consequence of the coupling between the sine-Gordon field and the Gaussian free field and the uniform $L^\infty$ bound on their difference,
we obtain that Theorem \ref{thm:near-maxima} also applies to the sine-Gordon field,
i.e.\ when $\Gamma_\epsilon^\GFF(\lambda)$ in \eqref{eq:near-maxima} is replaced by $\Gamma_\epsilon^\SG(\lambda)= \{x\in \Omega_\epsilon\colon \Phi^{\SG_\epsilon} \geq m_\epsilon - \lambda\}$.
This further implies that the correct conditions on the sequence $(r_\epsilon)_{\epsilon}$ in \eqref{eq:defn-extremal-process-sg} are as in Theorem \ref{thm:convergence-to-ppp}.

\subsection{Maximum of sine-Gordon field}
\label{sec:SG-max}
Finally, we comment on a recent result that has been obtained by the this author and Bauerschmidt; see \cite[Theorem 1.1]{MR4399156}.
Using the coupling from Section \ref{sec:coupling} and existing results for the GFF maximum it was proved that convergence in law to a randomly shifted Gumbel distribution also holds for the centred sine-Gordon maximum. 

\begin{theorem} 
\label{thm:convergence-to-Gumbel}
Let $0<\beta<6\pi$ and $z\in \R$.
Then the centred maximum of the $\epsilon$-regularised sine-Gordon field $\Phi^{\SG_\epsilon} \sim \nu^{\SG_\epsilon}$ converges in law to a randomly shifted Gumbel distribution:
\begin{equation}
\label{e:thm-max-convergence}
\max_{\Omega_\epsilon} \Phi^{\SG_\epsilon}
    - \frac{1}{\sqrt{2\pi}} \pa {2\log \frac{1}{\epsilon} - \frac34 \log \log \frac{1}{\epsilon}+b}
\to \frac{1}{\sqrt{8\pi}}X + \frac{1}{\sqrt{8\pi}}\log \ZDM^{\SG},
\end{equation}
where $\ZDM^\SG$ is a non-trivial positive random variable (depending on $\beta$ and $z$),
$X$ is an independent standard Gumbel random variable, and $b$ is a deterministic constant.
\end{theorem}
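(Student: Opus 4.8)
The plan is to use the scale-$s$ decomposition provided by the coupling of Theorem~\ref{thm:coupling} to reduce the statement to the Gaussian free field analysed by Biskup and Louidor, and then to let $s\downarrow 0$. Recall first that $\Phi^{\SG_\epsilon}_0$ is distributed as the sine-Gordon field by Theorem~\ref{thm:coupling-sg}, so we may build $\extprocSG$ from $\Phi^{\SG_\epsilon}_0$. Fix $s>0$ and write $\Phi^{\SG_\epsilon}_0=\Phiaux+\Xsc$, where $\Phiaux\coloneqq\Phi^{\GFF_\epsilon}_0-\Phi^{\GFF_\epsilon}_s$ is a centred Gaussian field with covariance $c_s^\epsilon$ — hence log-correlated, carrying essentially all of the roughness of the GFF — and the coarse field $\Xsc\coloneqq\Phi^{\SG_\epsilon}_s=\Phi^{\GFF_\epsilon}_s+\Phi^{\Delta_\epsilon}_s$ is \emph{independent} of $\Phiaux$. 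The point of this decomposition is that $\Xsc$ is smooth on scale $\sqrt s$ uniformly in $\epsilon$ — for $\Phi^{\GFF_\epsilon}_s$ by smoothness of the heat kernel at positive time, for $\Phi^{\Delta_\epsilon}_s$ by \eqref{e:delPhiDelta-bd} — and, together with its derivatives, converges as $\epsilon\downarrow 0$ to a smooth continuum field $\limXsc$ by Theorem~\ref{thm:Phi-limit}.

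The first main step is to establish the one-scale convergence: for each fixed $s>0$,
\begin{equation}\label{e:one-scale}
  \extprocSG\;\longrightarrow\;\PPP\!\bigl(e^{\scaling\,\limXsc(x)}\,\ZGFFs(dx)\otimes e^{-\scaling h}\,dh\bigr)\qquad(\epsilon\downarrow 0),
\end{equation}
with $\ZGFFs$ as in Theorem~\ref{thm:ext-process-massive-GFF}, up to deterministic constants absorbed into $\ZGFFs$. Since $\Phiaux\perp\Xsc$, I would first treat $\Xsc$ as a frozen smooth function $f$ and show that the extremal process of $\Phiaux+f$ converges to $\PPP\bigl(e^{\scaling f(x)}\ZDM^{\Phiaux}(dx)\otimes e^{-\scaling h}\,dh\bigr)$: shifting every height by the bounded function $f(x)$ turns an intensity $\ZDM(dx)\otimes e^{-\scaling h}\,dh$ into $\ZDM(dx)\otimes e^{-\scaling(h-f(x))}\,dh=\bigl(e^{\scaling f(x)}\ZDM(dx)\bigr)\otimes e^{-\scaling h}\,dh$. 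Here one must verify that the Biskup--Louidor identification of limiting point processes through convergence of Laplace functionals, and the Ding--Zeitouni control on the geometry and number of near-maxima (Theorems~\ref{thm:near-maxima} and~\ref{thm:size-level-sets}, in their general log-correlated form), survive the addition of a bounded smooth field, and that this convergence holds uniformly enough over $f$ in a class with bounded $C^k$-norms to allow integrating over the law of $\Xsc$, whose $C^k$-norms are bounded uniformly in $\epsilon$ by Theorem~\ref{thm:coupling-bd} and Gaussian regularity of $\Phi^{\GFF_\epsilon}_s$; by independence this produces the Cox process in~\eqref{e:one-scale}. Finally, since $\Phiaux$ is not literally the mass-$(m^2+1/s)$ GFF $\XGFFms$ but only a log-correlated Gaussian field with the same leading covariance and $\sqrt s$-scale correlation length, one either runs the Biskup--Louidor and Ding--Roy--Zeitouni arguments directly for $\Phiaux$, or couples $\Phiaux$ with $\XGFFms$ and compares the two extremal processes, in either case identifying $\ZDM^{\Phiaux}$ with $\ZGFFs$.

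Given \eqref{e:one-scale}, the rest is bookkeeping. Its left-hand side does not depend on $s$, so all the Cox processes on the right share the same law; setting $\ZDM^{\SG}(dx)$ to be the spatial intensity of this common limit gives $\ZDM^{\SG}(dx)\stackrel{d}{=}e^{\scaling\,\limXsc(x)}\ZGFFs(dx)$ for every $s>0$ (alternatively, one shows directly from the coupling bounds of Theorem~\ref{thm:coupling-bd}, and from the fact that the $\ZGFFs$ are built from the same Brownian motions, that $e^{\scaling\,\limXsc(x)}\ZGFFs(dx)$ converges as $s\downarrow 0$ and matches the limit). The properties $\ZDM^{\SG}(\Omega)<\infty$ a.s.\ and $\ZDM^{\SG}(A)>0$ a.s.\ for non-empty open $A$ then follow from the corresponding properties of $\ZGFFs$ in Theorem~\ref{thm:ext-process-massive-GFF} and the a.s.\ boundedness of $\limXsc$. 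The tightness of $(\extprocSG)_\epsilon$ needed to speak of subsequential limits also comes from the coupling: since $\Phi^{\Delta_\epsilon}$ is uniformly bounded by Theorem~\ref{thm:coupling-bd}, the number of points of $\extprocSG$ above any fixed height is dominated, up to a deterministic shift, by $|\Gamma_\epsilon^{\GFF}(\lambda)|$, which is tight by~\eqref{eq:level-set-size-large}, while Theorem~\ref{thm:convergence-to-Gumbel} controls the top of the process.

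The hard part will be the one-scale convergence \eqref{e:one-scale}, i.e.\ the assertion anticipated in the introduction that the key results of \cite{MR3509015} persist when a continuous independent field is added. The Biskup--Louidor argument rests on Gaussian comparison inputs and on the Dysonization/Brownian-evolution invariance that forces the deterministic Gumbel density $e^{-\scaling h}\,dh$ for the heights, so one must pinpoint exactly where the perturbation $f$ enters, carry the H\"older and smoothness estimates of Theorem~\ref{thm:coupling-bd} through the cluster analysis, and — the genuinely delicate point — contend with the fact that at $s=0$ the fields $\Phi^{\GFF_\epsilon}_0$ and $\Phi^{\Delta_\epsilon}_0$ are \emph{not} independent. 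This last obstruction is precisely why the argument is routed through the independent decomposition at scale $s>0$, the non-independent coupling at $s=0$ being recovered only in the limit $s\downarrow 0$.
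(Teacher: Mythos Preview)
Two issues need flagging.

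\textbf{First, the target.} The statement you were given is Theorem~\ref{thm:convergence-to-Gumbel}, which the paper does not prove at all: it is quoted from \cite{BH2020Max} as background. Your write-up is instead a sketch of Theorem~\ref{thm:convergence-to-ppp}, the main result on the extremal process. That theorem does imply the Gumbel statement (as the paper notes in the introduction, via the avoidance probability of $\Omega\times(x,\infty)$), but you never make that reduction explicit, and you even invoke Theorem~\ref{thm:convergence-to-Gumbel} itself as an input for tightness, which would be circular if your goal were to prove it.

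\textbf{Second, and more substantively, your decomposition is wrong.} You write $\Phi_0^{\SG_\epsilon}=(\Phi_0^{\GFF_\epsilon}-\Phi_s^{\GFF_\epsilon})+\Phi_s^{\SG_\epsilon}$, but this identity is false: by \eqref{e:Phi-SG-eps-coupling} the right-hand side equals $\Phi_0^{\GFF_\epsilon}+\Phi_s^{\Delta_\epsilon}$, whereas $\Phi_0^{\SG_\epsilon}=\Phi_0^{\GFF_\epsilon}+\Phi_0^{\Delta_\epsilon}$. The discrepancy is exactly the remainder $R_s=\Phi_0^{\Delta_\epsilon}-\Phi_s^{\Delta_\epsilon}=-\int_0^s\dot c_t\nabla v_t\,dt$, which is \emph{not} independent of the small-scale Gaussian part. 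The paper calls your sum $\tilde\Phi_s^{\SG}$ and devotes Lemma~\ref{lem:reduction-to-phis} and Proposition~\ref{prop:changing-sets-Rs} to showing that the extremal process of $\tilde\Phi_s^{\SG}$ approximates that of $\Phi_0^{\SG}$ as $s\downarrow 0$; this is genuinely delicate because adding even a uniformly small $R_s$ can in principle create or destroy local maxima, and one must route through the $r\epsilon$-versus-$r_\epsilon$ argument and Theorem~\ref{thm:near-maxima}. Your claim that ``the left-hand side of \eqref{e:one-scale} does not depend on $s$'' is therefore unfounded: what you have on the left is $\eta_{r_\epsilon,\epsilon}^s$, not $\extprocSG$, and the passage between them is a separate $s\downarrow 0$ step, not a tautology. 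Once this is repaired, the remainder of your outline (condition on the independent smooth field, apply the massive-GFF extremal process result, read off the tilted intensity) matches the paper's Theorem~\ref{thm:convergence-laplace-functionals-phis}; but the paper also replaces $\Phi_0^{\GFF}-\Phi_s^{\GFF}$ by a genuine massive GFF $X_s^{\GFF}$ plus a H\"older field $X_s^h$ (see \eqref{e:decomp-gaussian-part}), rather than appealing to log-correlated universality as you do.
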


This result could only be established for $\beta < 6\pi$ for the lack of uniform estimates on the remainder term $\Phi_0^\Delta$ beyond that threshold; see Theorem \ref{thm:Phi-limit}.
For the same reason the main result of this article is stated under the same assumption on $\beta$.
It would be interesting to see, if similar bounds could be established for $\beta\geq 6\pi$ in order to extend Theorem \ref{thm:convergence-to-Gumbel} to the full range of $\beta$.

\section{Proof of the main theorem}
\label{sec:proof-main-theorem}
In this section we present the proof of Theorem \ref{thm:convergence-to-ppp} in full detail. 
The strategy is to use the coupling between the sine-Gordon field and the GFF and the existing results on the extremal process associated with the GFF in \cite{MR3509015}.
Since the difference term $\Phi_0^\Delta$ is not independent from  $\Phi_0^\GFF$,
it is a priori not clear if the extremal process associated with the GFF conditional on $\Phi_0^\Delta$ also converges to a Poisson point process.
In order to resolve this inconvenience, we use similar arguments as in \cite[Section 4]{MR4399156} and pass to a new field $\tilde \Phi_s^\SG$ by removing the scales of $\Phi^\Delta$ up to $s>0$.
This new field approximates the original sine-Gordon field sufficiently well and moreover admits an independent decomposition into a (massive) GFF and a well behaved continuous field.
To identify the limit of the extremal process $\eta_{r_\epsilon,\epsilon}^s$ associated with $\tilde \Phi_s^\SG$ as a Poisson point process we calculate the limit of the Laplace functionals
\begin{equation}
\cL_s(f)=\E[e^{-\avg{\eta_{r_\epsilon,\epsilon}^s, f}}], \qquad \avg{\eta_{r_\epsilon,\epsilon}^s, f} = \sum_{x\in \Theta_{r_\epsilon}^s} f(x, \tilde \Phi_s^\SG(x) - m_\epsilon)
\end{equation}
as $\epsilon\to 0$ for $f\colon \Omega\times \R\to [0,\infty)$ continuous and  with compact support, which characterise the limiting point process uniquely; see Proposition \ref{prop:laplace-functional-ppp}.
To this end, we condition on the independent continuous part of $\tilde \Phi_s^\SG$ and use the known convergence of the Laplace functional of the extremal process associated with the (massive) GFF. 
Finally, sending the scale cut-off $s$ to $0$, we recover the extremal process associated with the sine-Gordon field as shown in Lemma \ref{lem:reduction-to-phis}.

\subsection{Independent scale decomposition}
As highlighted below Theorem \ref{thm:coupling-sg}, there is no independence between the large and small scales of the sine-Gordon field. In order to circumvent this, we closely follow the exposition in \cite[Sections 4.1--4.2]{MR4399156} and pass to an auxiliary field $\Phiaux$ for which we can find an independent scale decomposition. Note that by Theorem \ref{thm:coupling} the sine-Gordon field can be written as
\begin{equation}
\Phi_0^\SG = \Phi_0^\GFF - \Phi_s^\GFF + \Phi_s^\SG + R_s, \qquad R_s= -\int_0^s \dot c_t \nabla v_t (\Phi_t^\SG) \, dt.
\end{equation}
Using the independence between the short scales of the GFF and the large scales of the sine-Gordon field, see Theorem \ref{thm:coupling}, it follows that
\begin{equation}
\label{eq:tilde-Phi_SG}
\tilde \Phi_s^\SG \equiv (\Phi_0^\GFF- \Phi_s^\GFF) + \Phi_s^\SG
\end{equation}
admits an independent decomposition for every $s>0$. Moreover, the remainder field $R_s$ satisfies $\max_{\Omega_\epsilon} |R_s| \to 0$ as $s\to 0$ uniformly in $\epsilon$ by Theorem \ref{thm:coupling-bd}.
Therefore, instead of working with the sine-Gordon field directly, we first investigate the extremal process associated with $\tilde \Phi_s^\SG$ and then take the limit $s\to 0$.

In order to connect to the results for the extremal process associated with the Gaussian free field in \cite{MR3509015}, it is useful to replace the short scale field $\Phi_0^\GFF-\Phi_s^\GFF$ in \eqref{eq:tilde-Phi_SG} by a massive GFF.
To this end, let $g_s$ be defined as
\begin{equation}
\label{e:function-g}
g_s(\lambda) = \frac{1}{\lambda}(1-e^{-\lambda s}) - \frac{1}{\lambda + 1/s}
\end{equation}
for $\lambda>0$. Since $g_s>0$, we have in distribution 
\begin{equation}
\Phi^{\GFF}_0-\Phi^{\GFF}_s  \stackrel{d}{=}  \XGFFms + X^h_s,
\label{e:decomp-gaussian-part}
\end{equation}
where the two fields on the right-hand side are independent Gaussian fields with covariances
\begin{align}
\label{eq:GFF-m-s}
\cov(\XGFFms) &= (-\Delta + m^2 + 1/s)^{-1}  \\
\label{eq:Xh}
\cov(X^h_s) &= g_s(-\Delta+m^2).
\end{align}
Note that $\XGFFms$ is a Gaussian free field with mass $m^2 + 1/s$. Moreover, the field $X_s^h$ is a.s.\ H\"older continuous as stated in Lemma \ref{lem:hoelder-expectation} below.
For a proof of this result we refer the reader to \cite[Section 4.1]{MR4399156}.

\begin{lemma} 
\label{lem:hoelder-expectation}
Let $X_s^h$ be a Gaussian field with covariance \eqref{eq:Xh}.
There is $\gamma>0$ such that
\begin{equation}
\label{e:hoelder-expectation}
\sup_{\epsilon \geq 0}\E \Big[  \sup_{x\in\Omega_\epsilon} |X_s^h(x)| + \sup_{x \neq y\in\Omega_\epsilon} \frac{|X_s^h(x)-X_s^h(y)|}{|x-y|^\gamma}  \Big] \leq O_{s,m}(1).
\end{equation}
\end{lemma}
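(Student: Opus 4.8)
The plan is to verify the two Kolmogorov-type moment conditions for a centred Gaussian field, together with a covariance estimate that makes them uniform in the regularisation parameter $\epsilon$. The key is that the function $g_s$ from \eqref{e:function-g} is smooth and well-behaved: a Taylor expansion gives $g_s(\lambda) = \frac{1}{\lambda}(1-e^{-\lambda s}) - \frac{1}{\lambda+1/s} = O_s(1)$ for $\lambda$ bounded and, crucially, $g_s(\lambda) = O_s(\lambda^{-1-\delta})$ for large $\lambda$ and some $\delta>0$ — in fact one checks $g_s(\lambda)\sim 1/\lambda - 1/\lambda = O(1/\lambda^2)$ is too optimistic, but $g_s(\lambda)\to 0$ at rate at least $1/\lambda$ suffices once combined with the spectral decay of $-\Delta+m^2$ on $\Omega_\epsilon$. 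First I would record that $X_s^h$ is a centred Gaussian field with covariance $g_s(-\Delta^\epsilon + m^2)$ (on the lattice) or $g_s(-\Delta+m^2)$ (in the continuum), and express its pointwise variance and increment variance via the spectral/heat-kernel representation $g_s(\lambda) = \int_0^\infty (\ldots)$; the representation $g_s(\lambda)=\int_s^\infty (e^{-\lambda u} - e^{-(\lambda+1/s)(u-0)}\cdot(\ldots))\,du$-type identity coming from $\frac1\lambda(1-e^{-\lambda s}) = \int_0^s e^{-\lambda u}du$ and $\frac{1}{\lambda+1/s} = \int_0^\infty e^{-(\lambda+1/s)u}du$ is the cleanest: it writes the covariance of $X_s^h$ as a (signed but absolutely convergent) superposition of heat kernels $\dot c_u$, which are uniformly controlled in $\epsilon$.

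The three steps are then: (i) show $\sup_{\epsilon\ge 0}\sup_x \E[X_s^h(x)^2] = \var X_s^h(0) = g_s(-\Delta+m^2)(0,0) \le O_{s,m}(1)$, using the heat-kernel superposition and the uniform-in-$\epsilon$ bounds on $\dot c_u^\epsilon(0,0)$ for $u$ bounded away from $0$ (the $u\downarrow 0$ singularity is killed because the weight near $u=0$ behaves like the difference of two nearly equal heat kernels, i.e.\ the short-distance log-divergences cancel between the two terms of $g_s$); (ii) show the increment bound $\E[(X_s^h(x)-X_s^h(y))^2] \le O_{s,m}(1)\,|x-y|^{2\alpha}$ for some $\alpha\in(0,1)$ uniformly in $\epsilon$, again via the heat-kernel superposition and the standard gradient/Hölder estimates $|\dot c_u^\epsilon(x,x)-\dot c_u^\epsilon(x,y)| \le O(|x-y|/\sqrt u\,\wedge\,1)\cdot u^{-1}$-type bounds summed against the $du$ weight; and (iii) upgrade these second-moment bounds to the expected-supremum and expected-Hölder-seminorm bounds in \eqref{e:hoelder-expectation} by the Dudley/Fernique entropy bound (or a direct Kolmogorov continuity argument with a chaining estimate for Gaussian fields), which for a Gaussian field with the increment bound from (ii) on the compact torus $\Omega$ yields exactly $\E[\sup_x |X_s^h(x)| + \sup_{x\ne y}|X_s^h(x)-X_s^h(y)|/|x-y|^{\alpha'}] < \infty$ for any $\alpha' < \alpha$, with a constant depending only on the constants in (i)--(ii), hence uniform in $\epsilon$.

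The main obstacle is step (ii): extracting a genuine Hölder modulus — i.e.\ a power $|x-y|^{2\alpha}$ rather than merely $|x-y|^{2}(1+|\log|x-y||)^2$ or similar — from the heat-kernel superposition, uniformly in $\epsilon$, and making sure the lattice and continuum estimates are controlled by the same constant. This requires care near $u=0$, where the individual heat kernels are log-singular; the point is that $g_s$ is chosen precisely so that its symbol decays and is regular, so the short-scale contributions are damped, but one has to see this quantitatively at the level of the kernel, presumably by writing $g_s(-\Delta+m^2) = (-\Delta+m^2)^{-1} - (-\Delta+m^2+1/s)^{-1} - [(-\Delta+m^2)^{-1} - \frac1\lambda(1-e^{-\lambda s})\big|_{\lambda=-\Delta+m^2}]$ and recognising the bracket as $\int_s^\infty \dot c_u\,du$ plus a remainder — i.e.\ exactly the ``large-scale'' part of the GFF, which is smooth — so that $X_s^h$ is morally $\Phi_s^{\GFF}$ minus $\XGFFms$, both of which are smooth/Hölder on scale $\sqrt s$ with uniform-in-$\epsilon$ estimates available from the standard heat-kernel bounds recalled in \cite[Appendix]{BH2020Max}. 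Since the lemma is quoted from \cite[Lemma 4.3]{BH2020Max}, I would cite that reference for the technical heat-kernel estimates rather than reproving them, and present the above as the structure of the argument.
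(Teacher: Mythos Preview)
The paper does not actually prove this lemma: immediately above the statement it writes ``For a proof of this result we refer the reader to \cite[Section~4.1]{BH2020Max}.'' So there is no in-paper argument to compare against; your outline (uniform-in-$\epsilon$ variance bound, uniform-in-$\epsilon$ increment bound $\E[(X_s^h(x)-X_s^h(y))^2]\le C_{s,m}|x-y|^{2\alpha}$, then Kolmogorov/Dudley chaining for Gaussian fields) is the standard route and is presumably what the cited proof in \cite{BH2020Max} does as well, with the heat-kernel estimates from the appendix of that reference supplying the uniformity in $\epsilon$.

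One correction worth making: your hesitation ``$g_s(\lambda)\sim 1/\lambda-1/\lambda=O(1/\lambda^2)$ is too optimistic'' is unwarranted. Expanding $\frac{1}{\lambda+1/s}=\frac{1}{\lambda}-\frac{1}{s\lambda^2}+O(\lambda^{-3})$ shows $g_s(\lambda)=\frac{1}{s\lambda^2}+O(\lambda^{-3})$, so $g_s(\lambda)=O_s((1+\lambda)^{-2})$ genuinely holds; the present paper uses exactly this decay in the proof of Lemma~\ref{lem:convergence-Xsh}. This quadratic decay of the symbol is what makes the increment bound in step~(ii) straightforward --- the Fourier series $\sum_{k}g_s(|k|^2+m^2)|e^{ikx}-e^{iky}|^2$ converges with room to spare and gives a H\"older exponent, so the ``main obstacle'' you flag is less severe than you suggest. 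Also, your heuristic ``$X_s^h$ is morally $\Phi_s^{\GFF}$ minus $\XGFFms$'' is not quite the right identification (by construction $X_s^h \stackrel{d}{=} (\Phi_0^\GFF-\Phi_s^\GFF)-\XGFFms$, a difference of two \emph{rough} fields whose short-scale singularities cancel), but this does not affect the correctness of your scheme.
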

Hence, we may focus on the extremal process $\extprocPhis$ associated with the auxiliary field $\Phiaux$  defined by
\begin{equation}
\label{eq:def-phiaux}
\Phiaux \equiv \XGFFms + \Xsc,
\end{equation}
where the field $\XGFFms$ is as above and
\begin{equation}
\Xsc=X_s^h+\Phi_s^\SG
\end{equation}
is independent of $\XGFFms$ and 
converges to a continuous field as $\epsilon \to 0$. The fields $\Phiaux$ and $\tilde \Phi_s^\SG$ are only equal in distribution for which we choose to use a different notation. 
Note that while the sine-Gordon field $\Phi_0^\SG$ is a non-Gaussian field,
the non-Gaussian part of the auxiliary field $\Phiaux$ is only contained in the continuous field $\Xsc$.
We also emphasise that the field $\Xsc$ is different from the so-called coarse field in \cite{MR3433630} and \cite{MR4399156} in that its regularity does not depend on the box decomposition used in these references.
We therefore refrain from using the notion coarse field, but instead refer to it as \textit{continuous field}.

The following result justifies that with regards to the convergence of the extremal process,
it suffices to consider $\tilde \Phi_s^\SG$ for a fixed $s>0$ instead of $\Phi_0^\SG$ and then take the limit $s\to 0$. 
Its proof is postponed to Section \ref{sec:proof-of-reduction-to-phi-s}.  

\begin{lemma}
\label{lem:reduction-to-phis}
Let $s>0$ and let $\extprocPhis$ be the extremal process associated with $\tilde \Phi_s^\SG$ and assume that for every $s>0$, there is
a random measure $\ZDM_s(dx)$ on $\Omega$ with $\ZDM_s(\Omega)< \infty$, $\ZDM_s(A)>0$ a.s.\ for every non-empty open $A\subseteq \Omega$, such that as $\epsilon\to 0$,
for every continuous function $f\colon \Omega\times \R\to [0,\infty)$ with compact support
\begin{equation}
\label{eq:convergence-ext-proc-Phis-PPP}
\lim_{\epsilon \to 0} \E[e^{-\avg{\extprocPhis,f}}]
= \E\left[\exp\left(-\int(1-e^{-f(x,h)}) \ZDM_s(dx) e^{-\alpha h}dh\right)\right],
\end{equation}
where $\alpha>0$ is the same for all $s$.
Then there is a random measure $\ZDM^\SG(dx)$ on $\Omega$ with $\ZDM^\SG(\Omega)< \infty$, $\ZDM^\SG(A)>0$ a.s.\ for every non-empty open $A\subseteq \Omega$, such that as $s\to 0$
\begin{equation}
\label{eq:weak-convergence-Zs}
\ZDM_s(dx)\to \ZDM^\SG(dx)
\end{equation} 
in distribution and moreover, as $\epsilon\to 0$
\begin{equation}
\label{eq:convergence-extproc-SG}
\extprocSG \to \PPP(\ZDM^\SG(dx) \otimes e^{-\alpha h}dh).
\end{equation}
\end{lemma}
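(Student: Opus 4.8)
The plan is to prove both the convergence of the intensity measures $\ZDM_s(dx) \to \ZDM^\SG(dx)$ and the convergence of the extremal process of the sine-Gordon field by an approximation argument, using the hypothesis \eqref{eq:convergence-ext-proc-Phis-PPP} as the input at fixed $s>0$ and then sending $s\to 0$. The key quantitative tool is that $\tilde\Phi_s^\SG$ and $\Phi_0^\SG$ differ only by the remainder $R_s = \int_0^s \dot c_t \nabla v_t\,dt$, which by Theorem~\ref{thm:coupling-bd} (the bound \eqref{e:PhiDelta-bd0t}, since $R_s = \Phi_0^\Delta - \Phi_s^\Delta$) satisfies $\max_{\Omega_\epsilon}|R_s| \le O_\beta(|\lZ_s|)$ with $\lZ_s \to 0$ deterministically as $s\to 0$, \emph{uniformly in $\epsilon$}. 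First I would record that, because $R_s$ is a uniformly small deterministic-order perturbation, the extremal processes of $\tilde\Phi_s^\SG$ and $\Phi_0^\SG$ are close: for any $\delta>0$, on the event $\max|R_s|<\delta$ one has, for $r$-local maxima, an inclusion up to a shift of heights by at most $\delta$ and a change of localisation radius that is negligible in the limit. Concretely, for $f$ continuous nonnegative with compact support, $|\langle \extprocSG, f\rangle - \langle \extprocPhis, f\rangle|$ can be bounded by the modulus of continuity of $f$ in the height variable plus a boundary term controlled by the number of points of $\extprocPhis$ near $\mathrm{supp}\,f$ shifted by $\pm\delta$; the latter is tight by \eqref{eq:convergence-ext-proc-Phis-PPP} and the finiteness of the limiting intensity. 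This yields $\lim_{s\to 0}\limsup_{\epsilon\to 0}\big|\E[e^{-\langle\extprocSG,f\rangle}] - \E[e^{-\langle\extprocPhis,f\rangle}]\big| = 0$.

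Next I would establish the convergence \eqref{eq:weak-convergence-Zs} of the random measures $\ZDM_s$. The natural route is to use that, by hypothesis, $\extprocPhis$ converges to $\PPP(\ZDM_s(dx)\otimes e^{-\alpha h}dh)$, and then exploit the compatibility of these limiting Cox processes as $s$ varies: since $\langle\extprocPhis,f\rangle$ is Cauchy as $s\to 0$ (uniformly in $\epsilon$ in the sense above, combined with the $\epsilon\to 0$ limit), the Laplace functionals $\E[\exp(-\int(1-e^{-f})\,\ZDM_s(dx)\,e^{-\alpha h}dh)]$ form a Cauchy sequence in $s$, hence converge. Specialising to $f(x,h) = g(x)\mathbf 1_{h\ge a}$ and letting $a$ vary shows that the Laplace transforms $\E[\exp(-\theta\int g\,d\ZDM_s)]$ converge for all $\theta>0$ and all continuous $g\ge 0$, which by a standard argument (Laplace transforms determine laws of random measures, together with tightness of $\ZDM_s(\Omega)$ obtained from the finiteness of the limiting total intensity uniformly in $s$) gives a random measure $\ZDM^\SG$ with $\ZDM_s \to \ZDM^\SG$ in distribution. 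The positivity $\ZDM^\SG(A)>0$ a.s.\ on nonempty open $A$ passes to the limit since $\{\ZDM^\SG(A)=0\}$ has probability $\le \liminf_s \P(\ZDM_s(A)\le\kappa) $ which can be made arbitrarily small using the uniform lower bounds implicit in the hypotheses (alternatively, this is re-derived directly in the main proof and can be cited there), and $\ZDM^\SG(\Omega)<\infty$ a.s.\ from the aforementioned tightness.

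Finally, combining the two steps: for fixed $f$,
\begin{equation}
\E[e^{-\langle\extprocSG,f\rangle}] = \lim_{s\to 0}\lim_{\epsilon\to 0}\E[e^{-\langle\extprocPhis,f\rangle}] = \lim_{s\to 0}\E\Big[\exp\Big(-\int(1-e^{-f(x,h)})\,\ZDM_s(dx)\,e^{-\alpha h}dh\Big)\Big],
\end{equation}
and the right-hand side converges by \eqref{eq:weak-convergence-Zs} together with continuity of the functional $\mathcal Z \mapsto \E[\exp(-\int(1-e^{-f})\,d\mathcal Z\otimes e^{-\alpha h}dh)]$ under convergence in distribution of random measures (the integrand $(x,h)\mapsto (1-e^{-f(x,h)})e^{-\alpha h}$ is bounded continuous with the $h$-decay making the $h$-integral against a finite $x$-measure well-behaved; a truncation in $h$ plus the uniform tail bound handles the non-compactness in $h$). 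This identifies $\lim_{\epsilon\to 0}\E[e^{-\langle\extprocSG,f\rangle}]$ with the Laplace functional of $\PPP(\ZDM^\SG(dx)\otimes e^{-\alpha h}dh)$, and by Proposition~\ref{prop:laplace-functional-ppp} this gives \eqref{eq:convergence-extproc-SG}.

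The main obstacle I anticipate is the first step: controlling $\langle\extprocSG,f\rangle - \langle\extprocPhis,f\rangle$ uniformly in $\epsilon$ for small $s$. The subtlety is that shifting a field by a small but nonzero amount $R_s$ can in principle create or destroy $r_\epsilon$-local maxima near the boundary of level sets, so one cannot argue purely pointwise; one must show that the number of points of $\extprocPhis$ in a thin height-slab $[a-\delta,a+\delta]$ near $\mathrm{supp}\,f$ is negligible in the limit, which requires the finiteness and non-atomicity (in $h$) of the limiting intensity $\ZDM_s(dx)\otimes e^{-\alpha h}dh$ together with uniformity in $s$. This is exactly the kind of estimate handled in \cite[Section~4]{BH2020Max} via the bound \eqref{e:PhiDelta-bd0t} and tightness of level-set sizes \eqref{eq:level-set-size-large}, and I would follow that argument, checking that the localisation-radius discrepancy (an $r_\epsilon$ versus $r_\epsilon$ comparison for two fields within $\delta$ in sup-norm) does not affect the limit since $r_\epsilon\to 0$ and $r_\epsilon/\epsilon\to\infty$ are the only constraints used.
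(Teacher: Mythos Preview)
Your overall architecture is the same as the paper's: establish
\[
\lim_{s\to 0}\limsup_{\epsilon\to 0}\bigl|\E[e^{-\langle\extprocSG,f\rangle}] - \E[e^{-\langle\extprocPhis,f\rangle}]\bigr| = 0,
\]
deduce convergence of the Laplace transforms of $\ZDM_s(g)$ and hence \eqref{eq:weak-convergence-Zs} via Prohorov for random measures, then combine. Steps~2 and~3 of your sketch are essentially what the paper does.

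The gap is in Step~1, and you correctly flag it as the obstacle, but your proposed resolution does not work. The difficulty is not a height-slab issue: it is that the \emph{sets} $\Theta_{r_\epsilon}^s$ and $\Theta_{r_\epsilon}^\SG$ may differ even though $\max_{\Omega_\epsilon}|R_s|$ is uniformly small. A perturbation bounded by $\delta$ in sup-norm can create or destroy $r_\epsilon$-local maxima arbitrarily, because the local-maximum condition compares nearby values and says nothing about gaps. Controlling the number of points of $\extprocPhis$ in a thin height slab only handles the shift $\Phi_0^\SG\mapsto\Phi_0^\SG-R_s$ in the \emph{argument} of $f$; it does not give a one-to-one correspondence between the index sets of the two sums. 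The paper states explicitly (below \eqref{eq:sums-phis-phi-sg}) that the argument in \cite[Section~4]{BH2020Max}, which you propose to follow, is not strong enough here, precisely because that section treats only the global maximum and never needs to match sets of local maxima.

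The missing idea is to first replace $r_\epsilon$ by $r\epsilon$ for fixed $r$ (introducing an extra limit $r\to\infty$), using the analogue of Lemma~\ref{lem:r-local-maxima} for both $\extprocSG$ and $\extprocPhis$. At scale $r\epsilon$, the Ding--Zeitouni near-maxima geometry (Theorem~\ref{thm:near-maxima}, or its extension \cite[Lemma~3.3]{MR3729618}) applies: with high probability, high points of either field are either within $r\epsilon/2$ of each other or at distance $\geq 2/r$. This clustering, together with the H\"older continuity of $R_s$ (so that $\osc_{\Lambda_{2r\epsilon}(x)}R_s\to 0$), yields a genuine bijection between $\Theta_{r\epsilon}^s\cap\Gamma_\epsilon^s(\lambda)$ and $\Theta_{r\epsilon}^\SG\cap\Gamma_\epsilon^\SG(\lambda)$ via the maps $\Pi^\SG$, $\Pi^s$, exactly as in Lemma~\ref{lem:correspondence-maxima}. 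Without this $r\epsilon$-replacement and the Ding--Zeitouni input, there is no mechanism in your sketch to rule out that $\Theta_{r_\epsilon}^s$ and $\Theta_{r_\epsilon}^\SG$ have different cardinalities on the relevant level set.
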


We shall see below invoking standard results on the Laplace functionals of Poisson point processes
that \eqref{eq:convergence-ext-proc-Phis-PPP} actually implies that $\extprocPhis$ converges to a point process $\eta^s\sim \PPP(\ZDM_s(dx) \otimes e^{-\alpha h}dh)$.
Note that this result plays the same role as \cite[Lemma 4.1]{MR4399156} in the proof of convergence in distribution for the centred global maximum of the sine-Gordon field.
The analogy between the random measures $\ZDM_s(dx), \, \ZDM^\SG(dx)$ in Lemma \ref{lem:reduction-to-phis} and positive the random variables $\ZDM_s, \, \ZDM^\SG$ in \cite[Lemma 4.1]{MR4399156} is evident, but the argument for the existence of $\ZDM^\SG(dx)$ is somewhat different.

While the result \cite[Lemma 4.1]{MR4399156} follows rather straightforwardly from the uniform convergence of $R_s$,
the proof here is more delicate due to the lack of control of the event that new local extrema are generated by adding $R_s$.
More precisely, having the definition of $\avg{\extprocPhis,f}$ in mind, one would have to prove that with high probability every local maximum of $\tilde \Phi_s^\SG$ can be mapped to precisely one local maximum of $\Phi_0^\SG$, so that both sums in
\begin{equation}
\label{eq:sums-phis-phi-sg}
|\sum_{\Theta_{r_\epsilon}^s} f(x, \tilde \Phi_s^\SG-m_\epsilon ) - \sum_{\Theta_{r_\epsilon}^\SG} f(x, \Phi_0^\SG-m_\epsilon) \big | >\delta
\end{equation}
contain the same number of points with high probability.
In \eqref{eq:sums-phis-phi-sg} and henceforth, $\Theta_{r_\epsilon}^s$ denotes the set of $r_\epsilon$-local maxima of the field $\tilde \Phi_s^\SG$ or $\Phiaux$.
However, it seems that even the uniform convergence is not strong enough to prove that the event in \eqref{eq:sums-phis-phi-sg} has vanishing probability.
To circumvent this issue, we introduce an additional limit $r\to \infty$ and replace the $r_\epsilon$-local maxima by $r\epsilon$-local maxima.
This intermediate step puts us in the situation to apply Theorem \ref{thm:near-maxima} on the non-occurrence of high points on intermediate scales and then find a one-to-one correspondence between the local maxima of $\tilde \Phi_s^\SG$ and $\Phi_0^\SG$ in a probabilistic sense.
A similar procedure was used in \cite[Section 4.2]{MR3509015} and also appears in Section \ref{sec:proof-of-main-theorem} when these results are generalised to our case.

\subsection{Convergence of the continuous field}
In this section we discuss the convergence of the field $\Xsc=X_s^h+ \Phi_s^\SG$ as $\epsilon \to 0$.
To emphasise the difference between the lattice and the continuum field, we shall use for now $X_s^{c,\epsilon}$ and $X_s^{c,0}$ as well as $X_s^{h,\epsilon}$ and $X_s^{h,0}$.
For the field $\Phi_s^\SG$ we write  $\Phi_s^{\SG_\epsilon}$ and $\Phi_s^{\SG_0}$ in accordance with Theorem \ref{thm:coupling} and \cite[Section 3.2]{MR4399156}, where this convergence is discussed.

To address the convergence we use the following Fourier representation which allows to couple all fields $(X_s^{h,\epsilon})_{\epsilon>0}$ simultaneously.
Let $\Omega^*= 2\pi \Z^2$ and $\Omega_\epsilon^*= \{k\in 2\pi \Z^2\colon -\pi/\epsilon < k_i\leq \pi/\epsilon\}$ be the Fourier duals of $\Omega$ and $\Omega_\epsilon$. 
Let $(X(k))_{k\in 2\pi \Z^2}$ be a collection of independent complex standard Gaussian random variables (all defined on a common probability space)
subject to $X(k) =\overline{ X(-k)}$ for $k \neq 0$ and $X(0)\sim \mathcal{N}(0,1)$.

Recall that $\cov(X_s^{h,\epsilon})=g_s(-\Delta^\epsilon + m^2)\equiv c^\epsilon$, where $g_s$ is as in \eqref{e:function-g}. Since $c^\epsilon\colon \Omega_\epsilon \times \Omega_\epsilon\to \R$ is positive and symmetric,
there is a function $q^\epsilon \colon \Omega_\epsilon \to \R$ such that $[q^\epsilon * q^\epsilon](x-y)=c^\epsilon (x,y)$,
where $*$ denotes the discrete convolution on the lattice $\Omega_\epsilon$.  Let $\hat q^\epsilon(k)$ denote the $k$-th Fourier coefficient of $q^\epsilon$, so that 
\begin{equation}
q^\epsilon(x) = \sum_{k \in \Omega_\epsilon} \hat q^\epsilon (k) e^{ikx}, \qquad x\in \Omega_\epsilon.
\end{equation}
Note that $\hat q^\epsilon(k)= \sqrt{\hat c^\epsilon(k)}$, where $\hat c^\epsilon(k)$ is the Fourier coefficient of $c^\epsilon$ when seen as a function $\Omega_\epsilon \to \R$ via translation invariance. 
Then the random variable $\Phi^\epsilon$ with values in $\R^\Omega$ defined by
\begin{equation}
\label{e:Xsh-eps-fourier}
\Phi^\epsilon(x) = \sum_{k\in \Omega_\epsilon^*} \hat q^\epsilon(k) e^{ikx} X(k)
\end{equation}
and restricted to $x\in \Omega_\epsilon$ is multivariate Gaussian with mean $0$ and covariance $c^\epsilon$. 

In what follows we compare lattice fields with continuum fields by extending functions on $\Omega_\epsilon$ to $\Omega$ through the isometric embedding $I_\epsilon \colon L^2(\Omega_\epsilon) \to L^2(\Omega)$,
where $I_\epsilon f$ has the same Fourier coefficients as $f$ for $k\in \Omega_\epsilon^*$ and vanishing Fourier coefficients for $k\in \Omega^*\setminus \Omega_\epsilon^*$.
When it is clear from the context, we omit $I_\epsilon$ from the notation.

To define the limit $X_s^{h,0}$,
we let $\hat c^0(k)=g_s(|k|^2 + m^2)$ be the $k$-th Fourier coefficient of $c^0$ and set $\hat q^0(k)=\sqrt{\hat c^0(k)}$.
Then the random variable $\Phi^0$ with values in $\R^{\Omega}$ defined by
\begin{equation}
\label{e:Xsh-0-fourier}
\Phi^0(x) = \sum_{k\in \Omega^*} \hat q^0(k) e^{ikx} X(k), \qquad x\in \Omega
\end{equation} 
takes a.s.\ values in the space of continuous functions $C^0(\Omega)$, since $g_s(\lambda) = O(\frac{1}{1+\lambda^2})$.
Moreover, it is centred Gaussian with covariance $c^0=g_s(-\Delta + m^2)$ in the sense that
\begin{equation}
\E[\avg{f,\Phi^0} \avg{g,\Phi^0}]= \avg{c^0 f,g}
\end{equation}
holds for all $f,g\in L^2(\Omega)$.
Hence, we assume from now on that $X_s^{h,\epsilon}$ is constructed by \eqref{e:Xsh-eps-fourier} and \eqref{e:Xsh-0-fourier} for $\epsilon \geq 0$.
Note that all fields $(X_s^{h,\epsilon})_{\epsilon\geq 0}$ are now realised on the same probability space.
The following lemma establishes convergence in $H^\alpha(\Omega)$ for $\alpha<1$ under this coupling as $\epsilon \to 0$.

\begin{lemma}
\label{lem:convergence-Xsh}
Under the coupling described above, we have for every $\alpha<1$ as $\epsilon \to 0$
\begin{align}
\label{eq:convergence-Xsh-l2}
\sup_{x\in  \Omega_\epsilon} \E | X_s^{h,\epsilon}(x) - X_s^{h,0}(x) |^2 &\to 0, \\
\label{eq:convergence-Xsh-sobolev}
\E \|I_\epsilon X_s^{h,\epsilon} - X_s^{h,0} \|_{H^\alpha(\Omega)}^2 &\to 0.
\end{align}
\end{lemma}

\begin{proof}
Let $- \hat \Delta^0(k) = |k|^2$ and $- \hat \Delta^\epsilon(k)= \epsilon^{-2}\sum_{i=1}^2 (2-2\cos(\epsilon k_i))$
be the Fourier multipliers of the continuum and lattice Laplacian, respectively.
For $k \in \Omega_\epsilon^*$, we then have
\begin{equation}
\label{e:fourier-multipliers-difference}
  0 \leq -    \hat \Delta^0(k) + \hat \Delta^\epsilon(k)
  = \sum_{i=1}^2 (k_i^2-\epsilon^{-2}(2-2\cos(\epsilon k_i)))
  \leq |k|^2 h(\epsilon k),
\end{equation}
where $h(x) = \max_{i=1,2} (1 -x_i^{-2}(2-2\cos(x_i)))$ satisfies $h(x) \in [0,1-\kappa]$ with $\kappa = 4/\pi^2$ for $|x|\leq \pi$
and $h(x) =O(|x|^2)$.
In particular, we have for $ k \in \Omega_\epsilon^*$
\begin{equation}
\label{eq:bounds-fourier-multipliers-lattice-laplacian}
\kappa |k|^2 \leq -\hat \Delta^\epsilon (k) \leq |k|^2.
\end{equation}
Moreover, $\hat q^\epsilon(k) = \sqrt{g_s(-\hat \Delta^\epsilon(k) +m^2)}$ for $\epsilon \geq 0$,
and hence, using \eqref{eq:bounds-fourier-multipliers-lattice-laplacian} and the fact that $g_s$ is decreasing
\begin{equation}
\label{eq:bounds-fourier-coefficients-Xsh}
\hat q^0(k) = \sqrt{g_s(|k|^2+m^2)} \leq \hat q^\epsilon(k) \leq \sqrt{g_s(\kappa |k|^2+m^2)}.
\end{equation}

From the Fourier representations \eqref{e:Xsh-eps-fourier} and \eqref{e:Xsh-0-fourier} we have for $x\in \Omega_\epsilon$
\begin{equation}
X_s^{h,\epsilon}(x) - X_s^{h,0}(x) = \sum_{k \in\Omega_\epsilon^*} (\hat q^\epsilon(k) - \hat q^0 (k))e^{ikx}X(k) - \sum_{k \in\Omega^*\setminus \Omega_\epsilon^*} \hat q^0(k) e^{ikx} X(k).
\end{equation}
Squaring this equation, taking expectation and using orthogonality of the complex standard Gaussian random variables $(X(k))_{k\in 2\pi \Z^2}$, we have for $x\in \Omega_\epsilon$
\begin{align}
\label{eq:xsh-variance}
\E |X_s^{h,\epsilon}(x) - X_s^{h,0}(x)|^2  &= \sum_{k \in\Omega_\epsilon^*} |\hat q^\epsilon(k) - \hat q^0 (k)|^2 + \sum_{k \in\Omega^*\setminus \Omega_\epsilon^*} |\hat q^0(k)|^2\nnb
&= \sum_{k\in \Omega^*} |\hat q^\epsilon(k)  \mathbf{1}_{k\in \Omega_\epsilon^*} - \hat q^0 (k) |^2,
\end{align}
where the right hand side is uniform in $x\in \Omega_\epsilon$.
Using \eqref{eq:bounds-fourier-coefficients-Xsh} and the fact that $g_s(\lambda) = O(\frac{1}{1+\lambda^2})$ as $\lambda \to \infty$
we see that the sequence in the summation can be bounded by a summable sequence, and hence,
by the convergence $q^\epsilon(k) \to q^0(k)$ as $\epsilon \to 0$ and the dominated convergence theorem (applied to integration with respect to the counting measure on $\Omega^*$)
we have that the right hand side in \eqref{eq:xsh-variance} vanishes when $\epsilon \to 0$.
This proves \eqref{eq:convergence-Xsh-l2}.

To prove \eqref{eq:convergence-Xsh-sobolev}, we first observe that by the definition of the Sobolev norm, we have
\begin{align}
\|I_\epsilon X_s^{h,\epsilon} - X_s^{h,0}\|_{H^{\alpha}(\Omega)}^2 
= \sum_{k\in \Omega_\epsilon^*} (1+ |k|^2)^{\alpha} \big| ( \hat q^\epsilon(k) - \hat q^0(k) )X(k)\big|^2 
\nnb
+ 
\sum_{k \in \Omega^*\setminus \Omega_\epsilon^*}  (1+ |k|^2)^{\alpha} \big| \hat q^0(k) X(k)\big|^2.
\end{align}
Taking expectation we obtain
\begin{equation}
\label{eq:expectation-sobolev-norm-xsh}
\E\|I_\epsilon X_s^{h,\epsilon} - X_s^{h,0}\|_{H^{\alpha}(\Omega)}^2 
= \sum_{k\in \Omega^*} (1+ |k|^2)^{\alpha} |  \hat q^\epsilon(k) \mathbf{1}_{k\in \Omega_\epsilon^*}  - \hat q^0(k)|^2. 
\end{equation}
Since $|\hat q^0(k)|^2 \leq |\hat q^\epsilon(k)|^2 \leq g_s(\kappa|k|^2+m^2) = O(\frac{1}{1+|k|^4})$,
the sum on the right hand side of \eqref{eq:expectation-sobolev-norm-xsh} is finite for $\alpha<1$ and therefore vanishes as $\epsilon \to 0$ by the dominated convergence theorem similarly as above. 
\end{proof}

A consequence of the convergence in Lemma \ref{lem:convergence-Xsh} is the following result which is needed in the course of the proof of Theorem \ref{thm:convergence-to-ppp}.
It states that at the local extrema of $\XGFFmseps$, we may replace the independent continuous field $X_s^{c,\epsilon}$ by its limit $X_s^{c,0}$.
In what follows $\Theta_{r_\epsilon}^{\GFF}$ denotes the set of $r_\epsilon$-local maxima of the field $\XGFFmseps$.

\begin{proposition}
\label{prop:replace-continuous-with-limit}
Let $s>0$ and let $f\colon \Omega\times \R \to [0,\infty)$ be continuous and with compact support.
Then we have for any $\delta >0$
\begin{equation}
\label{eq:replace-continuous-with-limit}
\lim_{\epsilon \to 0} \P\Big(\big |\sum_{ \Theta_{r_\epsilon}^{\GFF}} f(x, \XGFFmseps(x)  + X_s^{c,\epsilon}(x) - m_\epsilon) - f(x, \XGFFmseps(x) + X_s^{c,0}(x) -m_\epsilon) \big |>\delta \Big) = 0.
\end{equation}
\end{proposition}

\begin{proof}
For $C>0$ and $\epsilon\geq 0$, let $B_{\epsilon,C} = \{ \max_{\Omega_\epsilon} X_s^{c,\epsilon} \leq C\}$. 
Since $f$ has compact support, there is $\lambda_0 >0$ such that $\supp f \subseteq \Omega\times [-\lambda_0, \infty)$. 
Hence, for $\lambda > \lambda_0 + C$,
we can replace on the event $B_{\epsilon, C} \cap B_{0,C}$ the summation over $\Theta_{r_\epsilon}^\GFF$ in \eqref{eq:replace-continuous-with-limit} by a summation over $\Theta_{r_\epsilon}^\GFF \cap \Gamma_\epsilon^\GFF(\lambda)$,
where $\Gamma_\epsilon^\GFF(\lambda) = \{ x\in \Omega_\epsilon\colon \XGFFmseps(x) \geq m_\epsilon -\lambda \}$.
Indeed, for $x \notin \Gamma_\epsilon^\GFF(\lambda)$, we have on $B_{\epsilon, C} \cap B_{0,C}$
\begin{equation}
\XGFFmseps(x) + X_s^{c,\epsilon}(x) - m_\epsilon \leq -\lambda + C < -\lambda_0
\end{equation}
and similarly with $X_s^{c,\epsilon}(x)$ replaced by $X_s^{c,0}(x)$. Thus, we have for $x\notin \Gamma_\epsilon^\GFF(\lambda) \text{ on } B_{\epsilon, C} \cap B_{0,C}$
\begin{equation}
 f(x, \XGFFmseps(x)  + X_s^{c,\epsilon}(x) - m_\epsilon) = f(x, \XGFFmseps(x) + X_s^{c,0}(x) -m_\epsilon) =0.
\end{equation}

Next, define $A_M= \{ |\Gamma_\epsilon^\GFF(\lambda) | \leq M \}$ and 
$A_\kappa = \{ \max_{x\in \Gamma_\epsilon^\GFF(\lambda)} | X_s^{c,\epsilon}(x) - X_s^{c,0}(x) | < \kappa \}$. 
Since $f$ is uniformly continuous, we can choose $\kappa$ small enough such that
\begin{equation}
|h-h'| < \kappa \implies \sup_{x\in \Omega_\epsilon}|f(x,h)-f(x,h')| \leq \delta/M.
\end{equation}
Thus, for such a $\kappa$ we have on the event $A_M \cap A_\kappa$
\begin{equation}
\Big| \sum_{\Theta_{r_\epsilon}^\GFF \cap \Gamma_\epsilon^\GFF(\lambda)} \big( f(x, \XGFFmseps  +X_s^{c,\epsilon} - m_\epsilon) - f(x, \XGFFmseps + X_s^{c,0} -m_\epsilon )\big) \Big| \leq |\Gamma_\epsilon^\GFF(\lambda)|\delta/M \leq \delta 
\end{equation}
which shows that the event in \eqref{eq:replace-continuous-with-limit} is contained in $A_M^c \cup A_\kappa^c \cup B_{\epsilon, C}^c \cup B_{0,C}^c$.

To conclude the proof we show that the probability of $A_M^c \cup A_\kappa^c \cup B_{\epsilon, C}^c \cup B_{0,C}^c$ is arbitrarily close to $0$ in the limit $\epsilon \to 0$.  
To this end, we fix $\tilde \kappa$ and, using Lemma \ref{lem:hoelder-expectation}, \cite[Lemma 4.5]{MR4399156} and Markov's inequality, choose $C$ large enough such that
\begin{equation}
\P(B_{\epsilon, C}^c \cup B_{0,C}^c) < \tilde \kappa.
\end{equation}
Moreover, using \eqref{eq:level-set-size-large}, we choose $M$ large enough such that $\P(A_M^c) \leq \tilde \kappa$ holds uniformly in $\epsilon$. Then we have
\begin{equation}
\label{eq:probability-complements}
\P(A_M^c\cup A_\kappa^c) \leq  \tilde \kappa + \P(A_\kappa^c \cap A_M),
\end{equation}
and hence, it suffices to prove that the probability on the right hand side in \eqref{eq:probability-complements} converges to $0$ as $\epsilon \to 0$. 
To this end, we first observe that
\begin{align}
&\P\big(\max_{x\in \Gamma_\epsilon^\GFF(\lambda)} | X_s^{c,\epsilon}(x) - X_s^{c,0}(x) | > \kappa, \,A_M \big) \leq  \nnb
& \leq \P\big(\max_{x\in \Gamma_\epsilon^\GFF(\lambda)} | X_s^{h,\epsilon}(x) - X_s^{h,0}(x) | > \kappa/2,\, A_M \big) + \P\big(\max_{x\in \Gamma_\epsilon^\GFF(\lambda)} |\Phi_s^{\SG_\epsilon}(x)  - \Phi_s^{\SG_0}(x)| >\kappa/2\big).
\end{align}
In the last display the second term vanishes when $\epsilon \to 0$ by the convergence of the decomposed sine-Gordon field shown in \cite[Theorem 3.1 and Lemma 3.5]{MR4399156}.
For the first term, conditioning on $\Gamma_\epsilon^\GFF(\lambda)$, a union bound and Chebychev's inequality yield
\begin{align}
\P\big(&\max_{x\in \Gamma_\epsilon^\GFF(\lambda)} | X_s^{h,\epsilon}(x) - X_s^{h,0}(x) | > \kappa/2, A_M\big) \nnb
& = \E \Big[ \big( \P(\max_{x\in A} |X_s^{h,\epsilon}(x) - X_s^{h,0}(x)| > \kappa/2) \mathbf{1}_{|A|\leq M}\big)_{A=\Gamma_\epsilon^\GFF(\lambda)} \Big] \nnb
& \leq \E \Big[ \big( |A| \max_{x\in A}\P( |X_s^{h,\epsilon}(x) - X_s^{h,0}(x)| > \kappa/2) \mathbf{1}_{|A|\leq M}\big)_{A=\Gamma_\epsilon^\GFF(\lambda)} \Big] \nnb
& \leq \frac{4}{\kappa^2} \E \Big[ \big( |A| \max_{x\in \Omega_\epsilon} \E |X_s^{h,\epsilon}(x) - X_s^{h,0}(x)|^2  \mathbf{1}_{|A|\leq M}\big)_{A=\Gamma_\epsilon^\GFF(\lambda)}\Big] \nnb
& \leq \frac{4}{\kappa^2} M \max_{x\in\Omega_\epsilon} \E |X_s^{h,\epsilon}(x) - X_s^{h,0}(x)|^2.
\end{align}
Now the expectation converges to $0$ as $\epsilon \to 0$ by Lemma \ref{lem:convergence-Xsh}.
In total, we have proved that the limit of the probability in \eqref{eq:replace-continuous-with-limit} is less than $\tilde \kappa$. Since $\tilde \kappa$ was arbitrary, the statement follows. 
\end{proof}

\subsection{Convergence to a Poisson point process: proof of Theorem \ref{thm:convergence-to-ppp}}
\label{sec:proof-of-main-theorem}
By Lemma \ref{lem:reduction-to-phis} the proof of Theorem \ref{thm:convergence-to-ppp} has been reduced to proving that the extremal process $\extprocPhis$ associated with $\tilde \Phi_s^\SG$ converges in distribution to a Poisson point process for a fixed $s>0$,
which we fix throughout the following sections.
Since $\tilde \Phi_s^\SG \stackrel{d}{=} \Phiaux$,
we may from now on assume that the extremal process $\extprocPhis$ corresponds to the field $\Phiaux$.
To this end, we consider the Laplace functionals 
\begin{equation}
\cL_s(f)=\E[e^{-\avg{\extprocPhis,f}}]
\end{equation}
for $f\colon \Omega\times \R \to \R$ non-negative, continuous and with compact support and prove convergence to the Laplace functional of a Poisson point process.
Once this convergence is established we may invoke the following characterisation for Poisson point processes via their Laplace functionals;
see for instance \cite[Proposition 13.2]{MR3791470}. 

\begin{proposition}
\label{prop:laplace-functional-ppp}
Let $(X,\mathcal{X})$ be a measurable space, let $\ZDM(dx)$ be an a.s\ $\sigma$-finite random measure on $X$, and let $\eta$ be a point process on $X$.
Then $\eta$ is a Poisson point process with intensity measure $\ZDM(dx)$,
if and only if
\begin{equation}
\E[e^{-\avg{\eta,f}}]= \E\Big[\exp\big(-\int_X(1-e^{-f(x)})\ZDM(dx)\big)\Big]
\end{equation}
holds for all measurable functions $f\colon X \to [0,\infty)$.
\end{proposition}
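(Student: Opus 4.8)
The plan is to prove the two implications separately: the ``only if'' direction by conditioning on $\ZDM$ so as to reduce to the classical exponential formula for a Poisson point process with a fixed intensity, and the ``if'' direction from the fact that the Laplace functional pins down the law of a point process.

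For ``only if'', suppose $\eta$ is directed by $\ZDM$, i.e.\ conditionally on $\ZDM$ it is a Poisson point process with the deterministic (a.s.\ $\sigma$-finite) intensity $\ZDM$. Then it suffices to establish, for a Poisson point process $\Pi$ with an arbitrary fixed $\sigma$-finite intensity $\mu$ on $(X,\mathcal{X})$ and any measurable $f\geq 0$, the deterministic identity
\begin{equation}
\E\big[e^{-\avg{\Pi,f}}\big]=\exp\Big(-\int_X(1-e^{-f(x)})\,\mu(dx)\Big),
\end{equation}
and then to take $\E[\,\cdot\,]$ over $\ZDM$ via the tower property (valid since $\ZDM$ is a.s.\ $\sigma$-finite). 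For this deterministic identity I would first check the case of a non-negative simple function $f=\sum_{i=1}^n c_i\mathbf{1}_{A_i}$ with the $A_i\in\mathcal{X}$ pairwise disjoint and of finite $\mu$-measure: then $\avg{\Pi,f}=\sum_i c_i\Pi(A_i)$, the $\Pi(A_i)$ are independent with $\Pi(A_i)\sim\mathrm{Poisson}(\mu(A_i))$, and the identity follows by multiplying the one-variable formula $\E[e^{-cN}]=e^{-\lambda(1-e^{-c})}$ valid for $N\sim\mathrm{Poisson}(\lambda)$. The general case then follows by approximating $f$ from below by simple functions of this type, each supported on a set of finite $\mu$-measure (using $\sigma$-finiteness), and applying monotone convergence on the right-hand side and dominated convergence on the left (since $e^{-\avg{\Pi,f_k}}\downarrow e^{-\avg{\Pi,f}}$, bounded by $1$); in the remaining case $\int(1-e^{-f})\,d\mu=\infty$ both sides equal $0$, because then $\mu(\{f>\delta\})=\infty$ for some $\delta>0$ and hence $\avg{\Pi,f}=\infty$ almost surely.

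For ``if'', I would use that the restriction of $f\mapsto\E[e^{-\avg{\eta,f}}]$ to non-negative simple functions $f=\sum_i t_i\mathbf{1}_{A_i}$ is exactly the collection of joint Laplace transforms of the vectors $(\eta(A_1),\dots,\eta(A_n))$ with $A_i\in\mathcal{X}$ and $n\in\N$; these determine the joint laws of those vectors and, by a $\pi$-system argument, the law of $\eta$ as a random element of the space of point measures on $(X,\mathcal{X})$ with its canonical $\sigma$-algebra. Since, by the ``only if'' direction, a Poisson point process directed by $\ZDM$ has precisely the Laplace functional appearing on the right-hand side of the asserted identity, the hypothesis forces $\eta$ to have the law of such a process. (Here ``$\eta$ is a Poisson point process with intensity $\ZDM(dx)$'' is read at the level of laws, which is exactly the notion required by the convergence-in-distribution statements that invoke this proposition; the stronger assertion that $\P(\eta\in\cdot\mid\ZDM)$ is a.s.\ Poisson with intensity $\ZDM$ would require the conditional form of the identity, after which the same reasoning applies verbatim to the regular conditional probability given $\ZDM$.)

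There is no genuine obstacle here --- the statement is classical, see \cite[Chapter~13]{last2017lectures} --- and essentially the only points that need care are the passage from simple to general $f$ under mere $\sigma$-finiteness of $\ZDM$ (truncation plus monotone/dominated convergence, with the divergent-integral case handled separately) and the soft measure-theoretic fact that the Laplace functional determines the law of a point process on a measurable space. For the use made of the proposition in this paper, only the forward formula together with this determinism fact are actually invoked.
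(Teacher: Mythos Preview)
Your sketch is correct and is the standard argument for this classical result. Note, however, that the paper does not actually prove this proposition: it simply quotes it as a known characterisation and refers to \cite[Proposition~13.2]{last2017lectures}. So there is no ``paper's own proof'' to compare against --- you have supplied precisely the textbook argument that the citation points to, including the careful handling of the $\sigma$-finite case and the observation about the distinction between the distributional and conditional readings of ``Poisson with random intensity $\ZDM$'', which is indeed the relevant one for how the proposition is applied downstream.
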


Hence, in light of Lemma \ref{lem:reduction-to-phis}, the main result Theorem \ref{thm:convergence-to-ppp} follows from Theorem \ref{thm:convergence-laplace-functionals-phis}  below,
which is the main task for the remainder of this section. 

\begin{theorem}
\label{thm:convergence-laplace-functionals-phis}
Let $s>0$ and let $f\colon \Omega \times \R\to [0,\infty)$ be continuous and with compact support. Then, as $\epsilon\to 0$,
\begin{equation}
\label{eq:laplace-functional-conv-for-eta-s}
\E[e^{-\avg{\extprocPhis,f}}] \to \E\left[\exp\left(-\int (1-e^{-f(x,h)})\ZDM_s(dx) e^{-\alpha h}dh\right)\right],
\end{equation}
where $\ZDM_s(dx) = e^{\alpha\Xsc} \ZGFFs(dx)$ and $\ZGFFs$ is as in Theorem \ref{thm:ext-process-massive-GFF}.
In particular,
there is a Poisson point process $\eta^s\sim \PPP(\ZDM^s(dx) \otimes e^{-\alpha h}dh)$, such that as $\epsilon \to 0$, 
\begin{equation}
\extprocPhis \to \eta^s.
\end{equation}
\end{theorem}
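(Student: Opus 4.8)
The plan is to condition on the continuous field $\Xsc$, apply the known convergence for the massive GFF (Theorem \ref{thm:ext-process-massive-GFF}), and then undo the conditioning. First I would fix $f$ non-negative, continuous with bounded support, and write $\Phiaux = \XGFFms + \Xsc$ with $\XGFFms \perp \Xsc$. The first genuine step is Proposition \ref{prop:probability-joint convergence}: it lets me replace the lattice continuous field $X_s^{c,\epsilon}$ in the extremal sum by its continuum limit $X_s^{c,0}$ at the cost of an error that is $o(1)$ in probability, uniformly in the behaviour of $\XGFFms$; since $\avg{\extprocPhis,f}$ is bounded by a constant times $|\Gamma_\epsilon^\GFF(\lambda_0)|$ on the support of $f$ and the Laplace functional is a bounded functional, this probabilistic closeness upgrades to closeness of the expectations $\E[e^{-\avg{\eta^s_{r_\epsilon,\epsilon},f}}]$. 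So it suffices to compute the limit of $\E[e^{-\avg{\tilde\eta^s_{r_\epsilon,\epsilon},f}}]$, where $\tilde\eta^s$ is built from $\XGFFms + X_s^{c,0}$ with $X_s^{c,0}$ the (continuous, independent) continuum field.

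Next I would condition on $X_s^{c,0}$. Writing $\Psi = X_s^{c,0}$ for a fixed continuous realisation, the points of $\tilde\eta^s$ sit at $(x, \XGFFms(x)+\Psi(x)-m_\epsilon)$ over $r_\epsilon$-local maxima of $\XGFFms+\Psi$. Here is where I expect the main obstacle: the set of $r_\epsilon$-local maxima of $\XGFFms+\Psi$ is not the same as that of $\XGFFms$, because adding a continuous (but not constant) function perturbs which vertex wins a local neighbourhood. However, $\Psi$ varies only on macroscopic scales (it is H\"older continuous with modulus independent of $\epsilon$, by Lemma \ref{lem:hoelder-expectation} and Theorem \ref{thm:coupling-bd}), whereas the competition among near-maxima of the GFF happens at scales comparable to the whole box only through well-separated clusters (Theorem \ref{thm:near-maxima}): high points of $\XGFFms$ are either within distance $\epsilon r$ or at distance $\geq 1/r$. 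Within a single microscopic cluster $\Psi$ is essentially constant up to $o(1)$, so the argmax is unchanged; between clusters the clusters are macroscopically separated and each contributes its own local maximum regardless of $\Psi$. Making this precise — that with probability tending to one the $r_\epsilon$-local maxima of $\XGFFms+\Psi$ that matter for $f$ are in bijection with those of $\XGFFms$, with heights shifted by $\Psi(x)+o(1)$ — is the technical heart and would be carried out by the intermediate $r\to\infty$ device described in the text: replace $r_\epsilon$-local maxima by $r\epsilon$-local maxima, use Theorem \ref{thm:near-maxima} to rule out intermediate-scale high points, establish the bijection at level $r$, then let $r\to\infty$ at the end. This parallels \cite[Section 4.2]{MR3509015}.

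Granting the bijection, I would then have, for fixed continuous $\Psi$,
\begin{equation}
\E\big[e^{-\avg{\tilde\eta^s_{r_\epsilon,\epsilon},f}} \,\big|\, X_s^{c,0}=\Psi\big] = \E\big[e^{-\avg{\eta_{r_\epsilon,\epsilon}^{\GFF,s}, f_\Psi}}\big] + o(1),
\end{equation}
where $f_\Psi(x,h) \equiv f(x, h+\Psi(x))$ is again non-negative, continuous with bounded support. By Theorem \ref{thm:ext-process-massive-GFF} the right-hand side converges to
\begin{equation}
\E\Big[\exp\Big(-\int (1-e^{-f(x,h+\Psi(x))})\,\ZGFFs(dx)\, e^{-\alpha h}\,dh\Big)\Big],
\end{equation}
and the substitution $h \mapsto h - \Psi(x)$ in the inner integral turns $e^{-\alpha h}\,dh$ into $e^{\alpha\Psi(x)} e^{-\alpha h}\,dh$, giving $\int (1-e^{-f(x,h)})\, e^{\alpha\Psi(x)}\ZGFFs(dx)\, e^{-\alpha h}\,dh$. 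Here I must be slightly careful that $\ZGFFs$ and $X_s^{c,0}$ are independent so that this conditioning is legitimate; this holds because $\XGFFms \perp \Xsc$ and $\ZGFFs$ is a functional of $\XGFFms$ alone, as in Theorem \ref{thm:ext-process-massive-GFF}. Finally I would integrate over the law of $\Psi = X_s^{c,0}$, justify passing the limit inside the expectation by dominated convergence (the integrand is bounded by $1$), and identify $\ZDM_s(dx) = e^{\alpha\Xsc}\ZGFFs(dx)$ — noting $\Xsc$ here means its continuum limit $X_s^{c,0}$, consistent with Lemma \ref{lem:convergence-Xsh} — which is a.s.\ finite and a.s.\ positive on non-empty open sets because $\ZGFFs$ is and $e^{\alpha\Xsc}$ is bounded above and below on the compact $\Omega$. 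This yields \eqref{eq:laplace-functional-conv-for-eta-s}. The "in particular" clause follows from Proposition \ref{prop:laplace-functional-ppp}: the limiting functional is exactly the Laplace functional of $\PPP(\ZDM_s(dx)\otimes e^{-\alpha h}\,dh)$, and convergence of Laplace functionals on non-negative compactly supported $f$ implies vague convergence of the point processes.
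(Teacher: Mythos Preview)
Your proposal is correct in spirit and uses exactly the same ingredients as the paper: the independence $\XGFFms \perp \Xsc$, the $r\to\infty$ device together with Theorem~\ref{thm:near-maxima} to swap the sets of local maxima (this is precisely Lemmas~\ref{lem:r-local-maxima} and~\ref{lem:r-limit-laplace-functional}), conditioning on the continuous field, the substitution $h\mapsto h-\Psi(x)$, and dominated convergence. The identification of $\ZDM_s(dx)=e^{\alpha\Xsc}\ZGFFs(dx)$ and the invocation of Proposition~\ref{prop:laplace-functional-ppp} at the end are exactly as in the paper.

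The only point where your ordering differs from the paper's in a way that matters is your \emph{first} step. You invoke Proposition~\ref{prop:probability-joint convergence} at the outset to replace $X_s^{c,\epsilon}$ by $X_s^{c,0}$ \emph{inside} $\avg{\extprocPhis,f}$. But $\extprocPhis$ sums over $\Theta_{r_\epsilon}^s$, the local maxima of $\XGFFms+X_s^{c,\epsilon}$, whereas Proposition~\ref{prop:probability-joint convergence} is stated (and proved) for sums over $\Theta_{r_\epsilon}^\GFF$; its proof uses that $\Gamma_\epsilon^\GFF(\lambda)$ is independent of $X_s^{c,\epsilon}-X_s^{c,0}$, which fails for $\Theta_{r_\epsilon}^s$. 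Moreover, your $\tilde\eta^s$ has a \emph{different} index set (the local maxima of $\XGFFms+X_s^{c,0}$), so you are changing both the heights and the set of summation points at once, which is strictly more than what the proposition delivers. The paper avoids this by reversing the order: it first passes from $\Theta_{r_\epsilon}^s$ to $\Theta_{r\epsilon}^\GFF$ via Lemmas~\ref{lem:r-local-maxima} and~\ref{lem:r-limit-laplace-functional}, then conditions on $\Xsc$, and only \emph{after} the inner expectation is over $\Theta_{r_\epsilon}^\GFF$ does it apply Proposition~\ref{prop:probability-joint convergence} to trade $X_s^{c,\epsilon}$ for $X_s^{c,0}$. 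If you simply reorder your steps this way, the argument closes with no further changes.
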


In the course of the proof of Theorem \ref{thm:convergence-laplace-functionals-phis} we also make use of the following two lemmas
that allow us to replace the sequence $(r_\epsilon)_\epsilon$ by $(r\epsilon)_\epsilon$ by introducing an additional limit $r\to \infty$.
This approach seems to be more complicated at first glance,
but it constitutes a setting in which Theorem \ref{thm:near-maxima} applies.
A similar argument is used in the proof of Lemma \ref{lem:reduction-to-phis} as explained in the comment below \eqref{eq:sums-phis-phi-sg}.
These results are vastly analogous to the ones in \cite[Section 4]{MR3509015}, with the main difference in the continuum field $X_s^c$,
which is non-Gaussian in our case.
We present their proofs in Section \ref{sec:cont-field-at-local-maxima} for completeness.
Recall that $\Phiaux=X_s^\GFF + X_s^c$ and that $\extprocPhis$ is the extremal process associated with $\Phiaux$.
\begin{lemma}[Analogue to {\cite[Lemma 4.4]{MR3509015}}]
\label{lem:r-local-maxima}
Let $f\colon \Omega \times \R \to [0,\infty)$ be measurable and with compact support.
Then, for any sequence $(r_\epsilon)_\epsilon$ satisfying $r_\epsilon/\epsilon\to \infty$ and $r_\epsilon \to 0$, 
\begin{align}
\label{eq:r-local-maxima-1}
\lim_{r\to \infty} \limsup_{\epsilon \to 0}  \P \big(\avg{\eta_{r\epsilon, \epsilon}^s, f} &\neq \avg{\extprocPhis, f} \big) = 0, \\
\label{eq:r-local-maxima-2}
\lim_{r\to \infty} \limsup_{\epsilon \to 0} \P \big(\sum_{\Theta_{r_\epsilon}^\GFF} f(x, \Phiaux - m_\epsilon)  &\neq \sum_{\Theta_{r\epsilon}^\GFF} f(x, \Phiaux - m_\epsilon)  \big) = 0. 
\end{align}
\end{lemma}

The reduction to local maxima in neighbourhoods of linearly shrinking size allows to interchange the sets of local maxima of $\Phiaux$ and $\XGFFms$ in the Laplace functionals of $\extprocPhis$.
Recalling the notation for the set of local maxima, we set $\Theta_{r\epsilon}^s = \{x\in \Omega_\epsilon\colon \Phiaux(x) = \max_{\Lambda_{r\epsilon}(x)} \Phiaux\}$
and similarly $\Theta_{r\epsilon}^\GFF= \{x\in \Omega_\epsilon\colon \XGFFms(x) = \max_{\Lambda_{r\epsilon}(x)} \XGFFms\}$. 

\begin{lemma}
\label{lem:r-limit-laplace-functional}
Let $f\colon \Omega \times \R\to [0,\infty)$ be continuous and with compact support. Then
\begin{equation}
\label{eq:r-limit-laplace-functional}
 \lim_{r\to \infty}\limsup_{\epsilon\to 0} \left | \E[e^{-\sum_{x \in \Theta_{r\epsilon}^s} f(x, \Phiaux-m_\epsilon)}] - \E[e^{-\sum_{x \in \Theta_{r\epsilon}^\GFF} f(x, \Phiaux-m_\epsilon)}] \right | =0.
\end{equation}
\end{lemma}

Using these two results, we are ready to give the proof of Theorem \ref{thm:convergence-laplace-functionals-phis}. 
\begin{proof}[Proof of Theorem \ref{thm:convergence-laplace-functionals-phis}]
For an element $\varphi\in \R^{\Omega}$, let $\tau_\varphi$ denote the translation with respect to $\varphi$, i.e.\
\begin{equation}
\tau_\varphi\colon \R^{\Omega}\to\R^{\Omega}, \Phi \mapsto \Phi + \varphi. 
\end{equation}
To simplify notation we also set
\begin{equation}
\label{eq:notation-shifted-function}
f\circ \tau_\varphi (x,h) = f(x, h+\varphi_x)
\end{equation}
for a function $f\colon\Omega \times \R \to \R$.
Note that if $f\colon \Omega \times \R \to [0,\infty)$ is continuous and with compact support and $\varphi$ is continuous,
then $f\circ \tau_{\varphi} \colon \Omega\times \R \to [0,\infty)$ is continuous and with compact support.

To prove \eqref{eq:laplace-functional-conv-for-eta-s}, we first observe that that by \eqref{eq:r-local-maxima-1} and the fact that $f$ is non-negative we have that
\begin{equation}
\lim_{\epsilon \to 0} \E[e^{-\avg{\extprocPhis, f}} ]  
= \lim_{r\to \infty}\lim_{\epsilon\to 0} \E[e^{-\avg{\eta_{r\epsilon, \epsilon}^s, f}}] = \lim_{r\to \infty}\lim_{\epsilon\to 0} \E[e^{-\sum_{x\in \Theta_{r\epsilon}^s} f(x, \XGFFmseps + X_s^{c,\epsilon}-m_\epsilon)}].
\end{equation}
By Lemma \ref{lem:r-limit-laplace-functional}, the summation in the last display can be switched from $x\in \Theta_{r\epsilon}^s$ to $x\in \Theta_{r\epsilon}^\GFF$, i.e.\
\begin{equation}
\lim_{\epsilon \to 0} \E[e^{-\avg{\extprocPhis, f}} ]   = 
\lim_{r\to \infty}\lim_{\epsilon\to 0} \E[e^{-\sum_{x\in \Theta_{r\epsilon}^\GFF} f(x, \XGFFmseps + X_s^{c,\epsilon}-m_\epsilon)}].
\end{equation}
Using now \eqref{eq:r-local-maxima-2}, we have that
\begin{equation}
\lim_{\epsilon \to 0} \E[e^{-\avg{\extprocPhis, f}} ]   =
\lim_{\epsilon \to 0} \E[e^{-\sum_{x\in \Theta_{r_\epsilon}^\GFF} f(x, \XGFFmseps + X_s^{c,\epsilon}-m_\epsilon)}].
\end{equation}
By Proposition \ref{prop:replace-continuous-with-limit}, we can now replace the field $X_s^{c,\epsilon}$ by its limit $X_s^{c,0}$, which yields
\begin{equation}
\lim_{\epsilon \to 0} \E[e^{-\avg{\extprocPhis, f}} ]   
= \lim_{\epsilon \to 0} \E[e^{-\sum_{x\in \Theta_{r_\epsilon}^\GFF} f(x, \XGFFmseps + X_s^{c,0}-m_\epsilon)}].
\end{equation}
Conditioning on the field $X_s^{c,0}$ and using the notation \eqref{eq:notation-shifted-function}, this leads to 
\begin{align}
\label{eq:calculation-laplace-functionals-phis}
\lim_{\epsilon \to 0} \E[e^{-\avg{\extprocPhis, f}} ]  
&= \lim_{\epsilon \to 0} \E[e^{-\sum_{x \in \Theta_{r_\epsilon}^\GFF} f\circ \tau_{X_s^{c,0}} (x, \XGFFmseps-m_\epsilon)}] = \nnb
& =\lim_{\epsilon \to 0} \E \Big[ \E[e^{-\sum_{x \in \Theta_{r_\epsilon}^\GFF} f\circ \tau_{\varphi} (x, \XGFFmseps-m_\epsilon)}] \bigm|_{\varphi=X_s^{c,0}} \Big] \nnb
&=\lim_{\epsilon \to 0} \E \Big [ \E[  e^{-\avg{\extprocGFFms, f\circ \tau_{\varphi}}} ] \bigm|_{\varphi=X_s^{c,0}} \Big ].
\end{align}
Note that the field $X_s^{c,0}$ is bounded and continuous a.s.\ by Lemma \ref{lem:hoelder-expectation} and \cite[Lemma 4.5]{MR4399156}, so that in the last display $f\circ \tau_{\varphi}$ has a.s.\ compact support. Thus, using Theorem \ref{thm:ext-process-massive-GFF} we have a.s.\ 
\begin{equation}
\lim_{\epsilon \to 0} \E[  e^{-\avg{\extprocGFFms, f\circ \tau_{\varphi}}} ] \bigm|_{\varphi=X_s^{c,0}} = \E[e^{-\avg{\eta^{\GFF,s},f\circ \tau_{\varphi}}}] \bigm|_{\varphi= X_s^{c,0}}.
\end{equation}
Therefore, we get by the dominated convergence theorem and \eqref{eq:calculation-laplace-functionals-phis}
\begin{equation}
\lim_{\epsilon \to 0} \E[e^{-\avg{\extprocPhis, f}} ] 
= \E \Big [  \E[e^{-\avg{\eta^{\GFF,s},f\circ \tau_{\varphi}}} ]\bigm|_{\varphi= X_s^{c,0}} \Big ].
\end{equation}

On the other hand, using the independence between $\eta^{\GFF,s}$ and  $X_s^{c,0}$,
Theorem \ref{thm:ext-process-massive-GFF} and a change of variables in the $h$-coordinate, we have
\begin{align}
\E\Big[ \E[ e^{-\avg{\eta^{\GFF,s},f\circ \tau_{\varphi}}} ] \bigm|_{\varphi=X_s^{c,0}} \Big] 
&=\E\Big[ \E \Big [ \exp\Big(-\int(1-e^{-f(x,h)}) e^{\alpha \varphi(x)} \ZDM_s^{\GFF}(dx) e^{-\alpha h}dh\Big) \Big]_{\varphi= X_s^{c,0} }\Big]=\nnb
&=\E\left[\exp\left(-\int(1-e^{-f(x,h)}) e^{\alpha X_s^{c,0}(x)} \ZDM_s^{\GFF}(dx) e^{-\alpha h}dh\right)\right],
\end{align}
which proves \eqref{eq:laplace-functional-conv-for-eta-s} and that the limit $\eta^s$ is a Poisson point process with the stated intensity measure. 
It follows that $\extprocPhis$ converges as $\epsilon \to 0$, since the Laplace functionals $\cL_s(f)$ converge for every suitable $f\colon \Omega\times \R \to [0,\infty)$.
\end{proof}

\subsection{Continuous field at local extrema: proof of Lemma \ref{lem:r-local-maxima} and Lemma \ref{lem:r-limit-laplace-functional}}
\label{sec:cont-field-at-local-maxima}
In this section we follow closely the exposition in \cite[Section 4]{MR3509015}.
Examining the proofs of Lemma 4.4 and Proposition 4.5 therein we see that  the main ingredients are the fact that the field $\hat h'$ is log-correlated and that $\hat h''$ is continuous and has variance of order unity.  
For a fixed $s>0$ these assumption clearly hold in our case when the roles of $\hat h'$ and $\hat h''$ are played by $\XGFFms$ and $\Xsc$.
Thus, the main purpose of this section is to show that these results continue to hold with our non-Gaussian field $\Xsc$. 

\begin{proof}[Proof of Lemma \ref{lem:r-local-maxima}]
We first prove \eqref{eq:r-local-maxima-1}. For $\lambda >0$ let $\Gamma_\epsilon^s(\lambda)$ be the random set that consists of all points where the field $\Phiaux$ is at least $m_\epsilon - \lambda$ high
and $\Theta_{r\epsilon}^s$ and $\Theta_{r_\epsilon}^s$ the sets of $r\epsilon$-local extrema and $r_\epsilon$-local extrema of $\Phiaux$.
We first prove that 
\begin{equation}
\label{eq:prob-set-not-empty}
\lim_{r\to 0} \lim_{\epsilon \to 0} \P(\Gamma_\epsilon^s(\lambda) \cap (\Theta_{r\epsilon}^s\Delta \Theta_{ r_\epsilon}^s) \neq \emptyset)=0.
\end{equation}
To this end, we first observe that by the properties of the sequence $(r_\epsilon)_\epsilon$ we have $r_\epsilon > r\epsilon$ eventually,
and hence, $\Theta_{r_\epsilon}^s \subseteq \Theta_{r\epsilon}^s$.
Thus, $\Theta_{r\epsilon}^s \Delta \Theta_{r_\epsilon}^s = \Theta_{r\epsilon}^s \setminus \Theta_{r_\epsilon}^s$ for $\epsilon>0$ small enough,
which means that the random set in the probability in \eqref{eq:prob-set-not-empty} contains all points that are $r\epsilon$-local maxima,
but not $r_\epsilon$-local maxima, and where the field is at least $m_\epsilon-\lambda$ high.

If that set is non-empty, then there is a random point $x$ that is an $r\epsilon$-local maximum with $\Phiaux(x) \geq m_\epsilon -\lambda$.
Since $x$ is not an $r_\epsilon$-local maximum, there must be a point $y$, for which we have $\Phiaux(y) \geq \Phiaux(x) \geq m_\epsilon-\lambda$ and 
\begin{equation}
r \epsilon \leq |x-y| \leq r_\epsilon \leq 1/r,
\end{equation}
where the last inequality holds for $\epsilon$ small enough.
Now recall that $\Phiaux \stackrel{d}{=} (\Phi_0^\GFF-\Phi_s^\GFF) + \Phi_s^\SG= \Phi_0^\GFF+\Phi_s^\Delta$, where in the last sum the fields are not independent.
Since $\max_{\Omega_\epsilon} |\Phi_s^\Delta| \leq \tilde C< \infty$ uniformly in $\epsilon$,
the non-Gaussian field $\Phiaux$ is equal to a log-correlated Gaussian field up to a constant.
This means that Theorem \ref{thm:near-maxima} (or rather its generalisation \cite[Lemma 3.3]{MR3729618}) holds also for $\Phiaux$ and thus, \eqref{eq:prob-set-not-empty} follows.

To conclude the proof of \eqref{eq:r-local-maxima-1} we take $\lambda$ large enough such that $f$ vanishes on $\Omega \times (-\infty,-\lambda)$. Then, for $\epsilon$ small enough, the event
\begin{equation}
\{\avg{\extprocPhis,f} - \avg{\eta_{r\epsilon, \epsilon}^s,f} \neq 0\}
\end{equation}
is contained in the event \eqref{eq:prob-set-not-empty}.

The proof of \eqref{eq:r-local-maxima-2} is identical when the sets $\Theta_{r_\epsilon}^s$ and $\Theta_{r\epsilon}^s$ are replaced by $\Theta_{r_\epsilon}^\GFF$ and $\Theta_{r_\epsilon}^\GFF$.
\end{proof}

Throughout the rest of this subsection we gradually work towards the proof of Proposition \ref{prop:changing-sets-of-extrema}
which readily implies Lemma \ref{lem:r-limit-laplace-functional}.

\begin{proposition}[Analogue to {\cite[Proposition 4.5]{MR3509015}}]
\label{prop:changing-sets-of-extrema}
Let $s>0$ and $f\colon \Omega \times \R\to [0,\infty)$ be continuous and with compact support. Then, for any $\delta >0$,
\begin{equation}
\label{eq:changing-set-of-maxima}
\lim_{r\to \infty} \limsup_{\epsilon\to 0} \P\Big( \big| \sum_{ \Theta_{r\epsilon}^s} f(x, \Phiaux(x)-m_\epsilon) - \sum_{\Theta_{r\epsilon}^\GFF} f(x, \Phiaux(x) - m_\epsilon) \big| >\delta\Big)=0.
\end{equation}
\end{proposition}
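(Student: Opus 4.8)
The plan is to mimic the structure of the proof of \cite[Proposition 4.5]{MR3509015}, using the fact that $\Phiaux = \XGFFms + \Xsc$ where $\XGFFms$ is a (massive) log-correlated Gaussian field and $\Xsc$ is continuous with modulus of continuity controlled uniformly in $\epsilon$ (by Lemma \ref{lem:hoelder-expectation} together with Theorem \ref{thm:coupling-bd}). First I would fix $\lambda>0$ large enough that $f$ vanishes on $\Omega \times (-\infty,-\lambda)$, so that in both sums only points in $\Gamma_\epsilon^s(\lambda)$ contribute; here $\Gamma_\epsilon^s(\lambda)=\{x : \Phiaux(x)\geq m_\epsilon-\lambda\}$. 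Since $\Phiaux$ equals a log-correlated Gaussian field up to the bounded field $\Phi_s^\Delta$, the near-maxima results (Theorem \ref{thm:near-maxima} via \cite[Lemma 3.3]{MR3729618}) and the level-set size bound \eqref{eq:level-set-size-large} apply to $\Phiaux$ as well.

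The core step is to set up a one-to-one correspondence between $\Theta_{r\epsilon}^s \cap \Gamma_\epsilon^s(\lambda)$ and $\Theta_{r\epsilon}^\GFF \cap \Gamma_\epsilon^s(\lambda)$. I would argue that on a high-probability event, every $r\epsilon$-local maximum of $\Phiaux$ near the maximum is also an $r\epsilon$-local maximum of $\XGFFms$ (and vice versa), up to a small shift of location that does not affect the value of $f$ in the limit. Concretely: suppose $x \in \Theta_{r\epsilon}^s$ but $x \notin \Theta_{r\epsilon}^\GFF$; then there is $y$ with $|x-y|\leq r\epsilon$ and $\XGFFms(y) > \XGFFms(x)$, so $\Phiaux(y) > \Phiaux(x) - (\Xsc(x) - \Xsc(y))$. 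Because $\Xsc$ has a (random, a.s.\ finite) modulus of continuity $\omega_{\Xsc}$, on distances $\leq r\epsilon$ we have $|\Xsc(x)-\Xsc(y)| \leq \omega_{\Xsc}(r\epsilon) \to 0$ as $\epsilon \to 0$; so $\Phiaux(y)$ is within $o(1)$ of $\Phiaux(x)$, hence $y \in \Gamma_\epsilon^s(\lambda+1)$ and $|x-y| \leq r\epsilon$. This forces two near-maxima of $\Phiaux$ at distance in $[\epsilon, r\epsilon] \subseteq (\epsilon r', 1/r')$ for suitable $r'$, contradicting Theorem \ref{thm:near-maxima} with high probability once $r$ is large; symmetrically for $x\in\Theta_{r\epsilon}^\GFF\setminus\Theta_{r\epsilon}^s$. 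Combined with the fact that $|\Gamma_\epsilon^s(\lambda+1)|$ is bounded by $M$ with probability $\geq 1-\tilde\kappa$ (uniformly in $\epsilon$, by \eqref{eq:level-set-size-large}), and that $f$ is uniformly continuous, the two sums differ by at most $M \cdot \sup_{|h-h'|\leq\omega_{\Xsc}(r\epsilon)}\sup_x|f(x,h)-f(x,h')|$ on this event, which is $\leq\delta$ for $\epsilon$ small and $r$ large.

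More precisely I would introduce events: $A_M=\{|\Gamma_\epsilon^s(\lambda+1)|\leq M\}$; $B_r=\{\nexists\, u,v\in\Gamma_\epsilon^s(\lambda+1) : \epsilon r'<|u-v|<1/r'\}$ for an appropriate $r'=r'(r)\to\infty$; and $C_{\epsilon,r}=\{\omega_{\Xsc}(r\epsilon) < 1\}$, say, which holds with probability tending to $1$ since $\omega_{\Xsc}(t)\to 0$ a.s.\ as $t\to 0$ by the Hölder bounds. On $A_M\cap B_r\cap C_{\epsilon,r}$ the two index sets $\Theta_{r\epsilon}^s\cap\Gamma_\epsilon^s(\lambda)$ and $\Theta_{r\epsilon}^\GFF\cap\Gamma_\epsilon^s(\lambda)$ coincide as \emph{sets of points} (not just as cardinalities) — this is the cleanest route, and it follows from the dichotomy argument above since any point in the symmetric difference produces the forbidden pair of near-maxima. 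Given equality of the index sets, the summands $f(x,\Phiaux(x)-m_\epsilon)$ are literally identical, so the difference in \eqref{eq:changing-set-of-maxima} vanishes on this event. Taking $\limsup_{\epsilon\to 0}$ then $r\to\infty$, the probabilities of $A_M^c$, $B_r^c$, $C_{\epsilon,r}^c$ can each be made arbitrarily small (the first uniformly in $\epsilon$ by choosing $M$; the second by Theorem \ref{thm:near-maxima} after $r\to\infty$; the third by $\epsilon\to 0$), which yields the claim.

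The main obstacle I anticipate is handling the randomness and lack of independence of $\Xsc$ carefully when invoking Theorem \ref{thm:near-maxima} and \eqref{eq:level-set-size-large}: those are stated for the GFF, and I am applying them to $\Phiaux = \Phi_0^\GFF + \Phi_s^\Delta$ (in distribution), which is a log-correlated Gaussian field plus a \emph{bounded but dependent} perturbation. The justification is that $\max_{\Omega_\epsilon}|\Phi_s^\Delta| \leq \tilde C$ deterministically (Theorem \ref{thm:coupling-bd}), so $\Gamma_\epsilon^s(\lambda) \subseteq \Gamma_\epsilon^{\GFF}(\lambda+\tilde C)$ pathwise and the generalised near-maxima statement \cite[Lemma 3.3]{MR3729618} (which is uniform over log-correlated fields) transfers; one must be slightly careful that this pathwise containment is what is needed, rather than any distributional identity, and that the modulus-of-continuity bound for $\Xsc$ — which really is the new ingredient compared to \cite{MR3509015}, since there the analog of $\Xsc$ is Gaussian — is invoked only through the a.s.\ statement $\omega_{\Xsc}(t)\to0$, avoiding any need for quantitative control. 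Finally, deducing Lemma \ref{lem:r-limit-laplace-functional} from Proposition \ref{prop:changing-sets-of-extrema} is routine: apply it with this $f$, note $|e^{-a}-e^{-b}|\leq|a-b|$ for the Laplace functionals on the bounded-difference event, and use $\|f\|_\infty$ together with the tightness of the number of contributing points on the complementary small-probability event.
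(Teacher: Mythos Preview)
Your argument contains a genuine gap at the ``cleanest route'' step, where you claim that on the good event the index sets $\Theta_{r\epsilon}^s\cap\Gamma_\epsilon^s(\lambda)$ and $\Theta_{r\epsilon}^\GFF\cap\Gamma_\epsilon^s(\lambda)$ coincide \emph{as sets of points}. This is generally false, and your dichotomy argument does not prove it. Given $x\in\Theta_{r\epsilon}^s\setminus\Theta_{r\epsilon}^\GFF$, you produce $y$ with $|x-y|\leq r\epsilon$ and both $x,y\in\Gamma_\epsilon^s(\lambda+1)$, and then try to embed $[\epsilon,r\epsilon]\subseteq(\epsilon r',1/r')$ to invoke Theorem~\ref{thm:near-maxima}. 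But this inclusion forces $r'<1$, whereas Theorem~\ref{thm:near-maxima} gives information only in the limit $r'\to\infty$: distances $\leq r\epsilon$ lie precisely in the \emph{allowed} ``close'' regime of the near-maxima dichotomy, not the forbidden intermediate one. Indeed, adding the continuous field $\Xsc$ will typically shift the location of a local maximum by a few lattice spacings within its cluster, so the $r\epsilon$-local maxima of $\Phiaux$ and of $\XGFFms$ sit at \emph{nearby but distinct} points. Once the sets differ, your conclusion that ``the summands are literally identical'' collapses.

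The paper does not attempt to identify the index sets. Instead it builds explicit relocation maps $\Pi^\GFF(x)=\argmax_{\Lambda_{2r\epsilon}(x)}\XGFFms$ and $\Pi^s(x)=\argmax_{\Lambda_{2r\epsilon}(x)}\Phiaux$, and shows (Lemma~\ref{lem:correspondence-maxima}) that on the good event $\Pi^\GFF$ sends $\Theta_{r\epsilon}^s\cap\Gamma_\epsilon^s(\lambda)\cap\Gamma_\epsilon^\GFF(\lambda)$ into $\Theta_{r\epsilon}^\GFF$, with $|\Pi^\GFF(x)-x|\leq r\epsilon/2$ and $0\leq\Phiaux(x)-\Phiaux(\Pi^\GFF(x))\leq\kappa$; here the near-maxima theorem is used in the correct direction, namely to force $\Pi^\GFF(x)$ back into $\Lambda_{r\epsilon/2}(x)$ by excluding the range $(r\epsilon/2,2/r)$. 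One then uses uniform continuity of $f$ in \emph{both} variables (the spatial shift matters) together with $|\Gamma_\epsilon^\GFF(\lambda)|\leq M$ to get a one-sided inequality; the reverse inequality comes from the symmetric map $\Pi^s$, and positivity of $f$ is essential because the relocation maps need not be surjective. Your preparatory ingredients (level-set inclusions, oscillation control of $\Xsc$, applicability of Theorem~\ref{thm:near-maxima} to $\Phiaux$ via the bounded perturbation $\Phi_s^\Delta$) are all correct and match Lemmas~\ref{lem:inclusions-level-sets}--\ref{lem:oscillation-cont-field}; what is missing is this relocation mechanism in place of the incorrect set-equality claim.
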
 

To this end, we proceed along similar steps as in the proof of \cite[Proposition 4.5]{MR3509015} with the roles of $\hat h'$ and $\hat h''$ being played by $\XGFFms$ and $\Xsc$.
We first show that the level sets of $\Phiaux$ can be compared to the ones of $\XGFFms$.

\begin{lemma}[Analogue to {\cite[Lemma 4.6]{MR3509015}}]
\label{lem:inclusions-level-sets}
The following limits hold for the level sets $\Gamma_\epsilon^s(\lambda)$ and $\Gamma_\epsilon^\GFF(\lambda)$:
\begin{align}
\label{eq:prob-inclusion-1}
\lim_{\lambda \to \infty} \liminf_{\epsilon \to 0} \, &\P(\Gamma_\epsilon^\GFF(\lambda) \subseteq \Gamma_\epsilon^s(2\lambda) ) =1, \\
\label{eq:prob-inclusion-2}
\lim_{\lambda \to \infty} \liminf_{\epsilon \to 0} \, &\P(\Gamma_\epsilon^s(\lambda) \subseteq \Gamma_\epsilon^\GFF(2\lambda) ) =1.
\end{align}
\end{lemma}

\begin{proof}
We show that in both cases the probability of the complement of the event of interest converges to $0$ when taking the said limits.
For the limit \eqref{eq:prob-inclusion-1} we first observe that 
\begin{align}
\Gamma_\epsilon^\GFF(\lambda) \setminus \Gamma_\epsilon^s(2\lambda) &= \{x\in \Omega_\epsilon \colon \XGFFms (x) \geq m_\epsilon - \lambda, \, \Phiaux(x) < m_\epsilon - 2 \lambda\}  \\
&\subseteq \{x\in \Omega_\epsilon \colon \XGFFms(x) \geq m_\epsilon-\lambda, \, \Xsc(x) < -\lambda \}.
\end{align}
Decomposing the latter event by the size of the set $\Gamma_\epsilon^\GFF(\lambda)$, we obtain
\begin{align}
\P&\big(\Gamma_\epsilon^\GFF(\lambda) \setminus \Gamma_\epsilon^s(2\lambda) \neq \emptyset\big) \leq 
\P\big( \exists x\in \Gamma_\epsilon^\GFF(\lambda) \colon \Xsc(x) < - \lambda\big) \nnb
&\leq  \P\big(|\Gamma_\epsilon^\GFF| > e^{C\lambda}\big) + \P\big( \exists x\in \Gamma_\epsilon^\GFF(\lambda) \colon \Xsc(x) <-\lambda, \, |\Gamma_\epsilon^\GFF(\lambda)| \leq e^{C\lambda}\big).
\end{align}
The first probability converges to $0$ by the lower bound in Theorem \ref{thm:size-level-sets}, see also Proposition \ref{prop:exponential-lower-bound-level-sets}, when applied to $\XGFFms$.
Using independence of $\XGFFms$ and $\Xsc$ together with a union bound, we have for the second term
\begin{align}
&\P\big( \exists x\in \Gamma_\epsilon^\GFF(\lambda) \colon \Xsc(x) <-\lambda, \, |\Gamma_\epsilon^\GFF(\lambda)| \leq e^{c\lambda}\big)\nnb
& =  \E\Big[\P\big( x\in A \colon \Xsc(x) < -\lambda, \, |A|\leq e^{c\lambda} \big)|_{A=\Gamma_\epsilon^\GFF(\lambda)}\Big] \nnb
&\leq\E\Big[ \Big(\sum_{x\in A} \P\big(\Xsc(x) <-\lambda \big) \mathbf{1}_{|A|\leq e^{C\lambda}}\Big)|_{A=\Gamma_\epsilon^\GFF(\lambda)} \Big]\nnb
&\leq e^{c\lambda}  \max_{x\in \Omega_\epsilon}\P(\Xsc(x) < -\lambda).
\end{align}
For the term in last display to converge to $0$, it suffices to prove that $\Xsc = X_s^h + \Phi_s^\SG$ has Gaussian tails uniformly in $x\in \Omega_\epsilon$ and $\epsilon>0$.
This is true for the field $X_s^h$, since it is a Gaussian field and the variances are bounded uniformly in $\epsilon>0$.
Indeed, using \eqref{eq:bounds-fourier-coefficients-Xsh} and the fact that $g_s(\lambda)=O(\frac{1}{1+\lambda^2})$, we have
\begin{equation}
\E | X_s^h(x) |^2 = \sum_{k\in \Omega_\epsilon^*} |\hat q_\epsilon(k)|^2
\leq \sum_{k\in \Omega_\epsilon^*} g_s(\kappa |k|^2+m^2) = \sum_{k\in \Omega^*}O\big(\frac{1}{|k|^4 + 1}\big)< \infty,
\end{equation}
which is  uniform in $x\in \Omega_\epsilon$ and $\epsilon>0$.
The field $\Phi_s^\SG$ is non-Gaussian, but is coupled to the Gaussian field $\Phi_s^\GFF$ up to a constant by Theorem \ref{thm:coupling} and Theorem \ref{thm:coupling-bd}. 
Hence, it suffices to prove that $\Phi_s^\GFF$ has Gaussian tails uniformly in $x\in \Omega_\epsilon$ and $\epsilon>0$. Note that
\begin{equation}
\cov(\Phi_s^\GFF) = \int_s^\infty \dot c_u^\epsilon du = (-\Delta^\epsilon + m^2)^{-1} e^{-s(-\Delta^\epsilon + m^2)}. 
\end{equation}
Hence, we have for any $x\in \Omega_\epsilon$
\begin{equation}
\E|\Phi_s^\GFF(x)|^2 
= \sum_{k\in \Omega_\epsilon^*} \frac{1}{-\hat \Delta^\epsilon(k) + m^2} e^{-s(-\hat \Delta^\epsilon(k) + m^2)} 
\leq \sum_{k\in \Omega^*} \frac{1}{\kappa|k|^2 + m^2} e^{-s(\kappa|k|^2 + m^2)},
\end{equation}
where the bound on the right hand side is finite for $s>0$ uniformly in $x\in \Omega_\epsilon$ and $\epsilon>0$.
Hence, there is a constant $C>0$, such that 
\begin{equation}
\max_{\Omega_\epsilon} \P(\Xsc(x) <-\lambda) \leq e^{-\lambda^2/2C},
\end{equation}
from which the claimed convergence follows when taking $\epsilon \to 0$ and then $\lambda \to \infty$.

To prove \eqref{eq:prob-inclusion-2} we first observe that 
\begin{align}
\Gamma_\epsilon^s(\lambda) &\setminus \Gamma_\epsilon^\GFF(2\lambda) = \{x\in \Omega_\epsilon \colon \Phiaux(x) \geq m_\epsilon - \lambda, \, \XGFFms < m_\epsilon - 2\lambda\} \nnb
& =\bigcup_{n\geq 2} \big\{ x\in \Omega_\epsilon \colon \Phiaux(x) \geq m_\epsilon - \lambda, \, m_\epsilon - (n+1)\lambda \leq \XGFFms < m_\epsilon - n\lambda \big\} \nnb
\label{eq:union-random-sets}
&\subseteq \bigcup_{n\geq 2} \big\{x\in \Omega_\epsilon \colon \XGFFms(x) \geq m_\epsilon - (n+1)\lambda, \, \Xsc(x) > (n-1)\lambda \big\}.
\end{align}
Denote by $A_n$ the event that the random set indexed by $n$ in the union \eqref{eq:union-random-sets} is empty, i.e.\ 
\begin{equation}
A_n= \Big \{ \big\{x\in \Omega_\epsilon \colon \XGFFms(x) \geq m_\epsilon - (n+1)\lambda, \, \Xsc(x) > (n-1)\lambda \big\} = \emptyset \Big \}.
\end{equation}
We also set $B_n=\{ |\Gamma_\epsilon^\GFF(n\lambda)| > e^{\kappa n \lambda} \}$ for some $\kappa >0$. Then 
by \cite[Proposition 4.1]{MR3509015}
 there is $\beta>0$, such that
\begin{equation}
\P(B_n) \leq e^{-\beta\kappa n \lambda}.
\end{equation}
Using a union bound and a decomposition of the event $A_n$ by $B_{n+1}$, we obtain from \eqref{eq:union-random-sets}
\begin{align}
\label{eq:union-bound-A-n}
\P\big(\Gamma_\epsilon^s(\lambda)\setminus \Gamma_\epsilon^\GFF(2\lambda) \neq \emptyset \big)\leq \sum_{n\geq 2} \P(A_n)\leq \sum_{n\geq 2} \big( \P(B_{n+1}) + \P(A_n \cap B_{n+1}^c) \big).
\end{align}
To see that also the second term in the sum in \eqref{eq:union-bound-A-n} is exponentially small we write
\begin{align}
A_n\setminus B_{n+1} = \big\{ \exists x \in \Gamma_\epsilon^\GFF\big((n+1)\lambda \big) \colon \Xsc(x) > (n-1)\lambda, \, |\Gamma_\epsilon^\GFF\big((n+1)\lambda\big) |\leq e^{\kappa (n+1) \lambda} \big \}
\end{align}
and, using again a union bound and conditioning on $\Gamma_\epsilon^\GFF\big((n+1)\lambda\big)$, bound its probability by
\begin{align}
\P(A_n\setminus B_{n+1}) 
&=\E \Big[\P\big(\exists x\in C \colon \Xsc(x) >(n-1)\lambda, \, |C|\leq e^{\kappa(n+1)\lambda} \big) \bigm|_{C=\Gamma_\epsilon^\GFF\big((n+1)\lambda\big)} \Big]=\nnb
& \leq \E\Big[ \big( \sum_{x\in C} \P\big( \Xsc(x)> (n-1)\lambda\big) \mathbf{1}_{|C|\leq e^{\kappa(n+1)\lambda}} \big)\bigm|_{C=\Gamma_\epsilon^\GFF\big( (n+1)\lambda \big)} \Big] \nnb
&\leq  e^{\kappa (n+1)\lambda} \max_{x\in \Omega_\epsilon} \P\big(\Xsc(x)>(n+1)\lambda\big) \leq e^{\kappa (n+1)\lambda} e^{-\big((n-1)\lambda \big)^2/2C}.
\end{align}
Hence, the sum in \eqref{eq:union-bound-A-n} converges uniformly in $n$, and thus, it converges to $0$ when taking the limits $\epsilon \to 0$ and then $\lambda \to \infty$.
\end{proof}

The next result states that the continuous field $\Xsc$ does not fluctuate too much at points where the field $\XGFFms$ is large.
Recall that the oscillation of a function $g$ on a set $A$ is defined by
\begin{equation}
\osc_A g = \max_{x\in A}g(x) - \min_{x\in A} g(x).
\end{equation}

\begin{lemma}[Analogue to {\cite[Lemma 4.7]{MR3509015}}]
\label{lem:oscillation-cont-field}
For any $\lambda>0$, any $\kappa >0$ and any $r\geq 1$,
\begin{equation}
\label{eq:limit-oscillation-cont-field}
\limsup_{\epsilon \to 0} \P\big (\max_{x\in \Gamma_\epsilon^\GFF(\lambda)} \osc_{\Lambda_{2r\epsilon}(x)} \Xsc > \kappa \big)=0
\end{equation}
\end{lemma}

\begin{proof}
Denote $F(x) = \{ \max_{z \in \Lambda_{2r\epsilon}(x)} |\Xsc(z) - \Xsc(x)| > \kappa/2\}$ and note that the event in \eqref{eq:limit-oscillation-cont-field}  is contained in $\bigcup_{x\in \Gamma_\epsilon^\GFF(\lambda)} F(x)$.
For $\lambda >0$ and $M>0$ we define $A_{M}=\{ |\Gamma_\epsilon^\GFF(\lambda)| \leq M\}$. 
Then, by \eqref{eq:level-set-size-large}, we have for its complement 
\begin{equation}
\lim_{M\to \infty} \limsup_{\epsilon \to 0} \P(A_{M}^{c} ) = 0.
\end{equation}
On the other hand, conditioning on $\Gamma_\epsilon^\GFF(\lambda)$, a union bound, Lemma \ref{lem:hoelder-expectation}, \cite[Lemma 4.5]{MR4399156} and Markov's inequality yield
\begin{align}
\P\Big( &\bigcup_{x\in \Gamma_\epsilon^\GFF(\lambda)} F(x) \cap A_{M} \Big) 
=  \E \Big[ \Big( \P\big( \bigcup_{x\in A} F(x) \mathbf{1}_{|A|\leq M}     \big)  \Big) \bigm|_{A=\Gamma_\epsilon^\GFF(\lambda)}  \Big] \nnb
&\leq \E \Big[ \Big(\sum_{x\in A}\P\big( F(x) \mathbf{1}_{|A|\leq M}     \Big) \bigm|_{A=\Gamma_\epsilon^\GFF(\lambda)}  \Big] 
\leq M \max_{x\in \Omega_\epsilon} \P\big(F(x)\big ) \nnb
&\leq M \max_{x\in \Omega_\epsilon} \sum_{z\in \Lambda_{r\epsilon}(x)} \P(|\Xsc(x) - \Xsc(z)| >\kappa/2 )
\leq \frac{2M}{\kappa} \max_{x \in \Omega_\epsilon} \sum_{z\in \Lambda_{r\epsilon}(x) } \E[|\Xsc(x) - \Xsc(z)|] \nnb
&\leq \frac{2M}{\kappa} \max_{\Omega_\epsilon} |\Lambda_{r\epsilon}(x)| C |r\epsilon|^\gamma 
\leq \frac{2M}{\kappa}  r^2 C |r\epsilon|^\gamma,
\end{align}
which converges to $0$ when sending $\epsilon \to 0$.
\end{proof}

To state the next result we introduce the following notation to address the locations of maxima of $\Phiaux$ and $\XGFFms$ around a point $x\in \Omega_\epsilon$:
\begin{equation}
 \Pi^s(x) \coloneqq \argmax_{\Lambda_{2r\epsilon}(x)} \Phiaux \qquad \text{and} \qquad \Pi^\GFF(x) \coloneqq \argmax_{\Lambda_{2r\epsilon}(x)} \XGFFms.
\end{equation}
Note that since the maxima of the fields $\Phiaux$ and $\XGFFms$ are a.s.\ unique, these are a.s.\ well-defined maps $\Omega_\epsilon \to \Omega_\epsilon$.
Our goal is to prove that when restricting to appropriate level sets with high probability every $\Phiaux$ local maximum can be mapped to a $\XGFFms$ local maximum nearby and vice versa.
At this point we use the fundamental Theorem \ref{thm:near-maxima} which is now accessible thanks to the replacement of $r_\epsilon$ by $r\epsilon$ and the additional limit $r\to \infty$.

\begin{lemma}[Analogue to {\cite[Lemma 4.8]{MR3509015}}]
\label{lem:correspondence-maxima}
The following holds with probability tending to one in the limits $\epsilon \to 0$, $\kappa \to 0$ and then $r\to \infty$:
\begin{align}
\label{eq:phi-s-gff-local-max-1}
x\in \Theta_{r\epsilon}^s\cap \Gamma_\epsilon^s(\lambda)\cap \Gamma_\epsilon^\GFF(\lambda) \qquad &\implies \qquad 
\begin{cases}
\Pi^\GFF(x) \in \Theta_{r\epsilon}^\GFF, \, |\Pi^\GFF(x)-x| \leq r\epsilon/2,\\
\text{and~} 0\leq \Phiaux(x) - \Phiaux(\Pi^\GFF(x)) \leq \kappa
\end{cases}
\\
\label{eq:phi-s-gff-local-max-2}
x\in \Theta_{r\epsilon}^\GFF \cap \Gamma_\epsilon^s(\lambda) \cap \Gamma_\epsilon^\GFF(\lambda) \qquad &\implies \qquad 
\begin{cases}
\Pi^s(x) \in \Theta_{r\epsilon}^s, \, |\Pi^s(x)-x| \leq r\epsilon/2,\\
\text{and~} 0\leq \XGFFms(x) - \XGFFms(\Pi^s(x)) \leq \kappa
\end{cases}
\end{align}
\end{lemma}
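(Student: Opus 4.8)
The plan is to establish the two implications symmetrically, treating \eqref{eq:phi-s-gff-local-max-1} first. Suppose $x \in \Theta_{r\epsilon}^s \cap \Gamma_\epsilon^s(\lambda) \cap \Gamma_\epsilon^\GFF(\lambda)$, and set $y = \Pi^\GFF(x)$, the unique argmax of $\XGFFms$ on $\Lambda_{2r\epsilon}(x)$. The first step is to control the height gap $\Phiaux(x) - \Phiaux(y)$. Writing $\Phiaux = \XGFFms + \Xsc$ and using that $x$ is an $r\epsilon$-local maximum of $\Phiaux$ together with $\XGFFms(y) \geq \XGFFms(x)$, one gets $0 \leq \Phiaux(x) - \Phiaux(y) \leq \Xsc(x) - \Xsc(y) \leq \osc_{\Lambda_{2r\epsilon}(x)} \Xsc$, and also $\XGFFms(y) - \XGFFms(x) \leq \Xsc(x) - \Xsc(y) \leq \osc_{\Lambda_{2r\epsilon}(x)} \Xsc$. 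By Lemma \ref{lem:oscillation-cont-field}, applied with the level $\lambda$, this oscillation is at most $\kappa$ on all of $\Gamma_\epsilon^\GFF(\lambda)$ with probability tending to one as $\epsilon \to 0$; since $x \in \Gamma_\epsilon^\GFF(\lambda)$ this gives both the height inequality $0 \leq \Phiaux(x) - \Phiaux(y) \leq \kappa$ and the companion bound $\XGFFms(y) - \XGFFms(x) \leq \kappa$. Note $y$ is automatically in $\Gamma_\epsilon^\GFF(\lambda)$ because $\XGFFms(y) \geq \XGFFms(x) \geq m_\epsilon - \lambda$.

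The second and harder step is to show $y \in \Theta_{r\epsilon}^\GFF$ and $|y - x| \leq r\epsilon/2$, both with probability tending to one \emph{after} the limits $\epsilon \to 0$, $\kappa \to 0$, $r \to \infty$. For the distance bound: if $|y - x| > r\epsilon/2$ held, then $x$ and $y$ would be two points of $\Gamma_\epsilon^\GFF(2\lambda)$ — here using $\XGFFms(x) \geq m_\epsilon - \lambda \geq m_\epsilon - 2\lambda$ and the same for $y$ — at mutual distance in the window $[r\epsilon/2, 2r\epsilon]$, which for large $r$ sits inside $(\epsilon r', 1/r')$ for $r' = r/4$, say. Theorem \ref{thm:near-maxima} (or its log-correlated generalisation \cite[Lemma 3.3]{MR3729618}, which applies to $\XGFFms$ since it is a log-correlated Gaussian field) then says the probability of such a configuration in $\Gamma_\epsilon^\GFF(c \log\log r')$ vanishes as $r' \to \infty$; one only needs $\lambda$ bounded, so for fixed $\lambda$ this forces $|y - x| \leq r\epsilon/2$ with the claimed probability. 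For $y \in \Theta_{r\epsilon}^\GFF$: by definition $y$ maximises $\XGFFms$ over $\Lambda_{2r\epsilon}(x)$, and since $|y - x| \leq r\epsilon/2$ we have $\Lambda_{r\epsilon}(y) \subseteq \Lambda_{(3/2)r\epsilon}(x) \subseteq \Lambda_{2r\epsilon}(x)$, so $y$ maximises $\XGFFms$ over $\Lambda_{r\epsilon}(y)$, i.e.\ $y \in \Theta_{r\epsilon}^\GFF$. The implication \eqref{eq:phi-s-gff-local-max-2} is proved by the mirror-image argument: for $x \in \Theta_{r\epsilon}^\GFF \cap \Gamma_\epsilon^s(\lambda) \cap \Gamma_\epsilon^\GFF(\lambda)$ and $y = \Pi^s(x)$, the same oscillation estimate from Lemma \ref{lem:oscillation-cont-field} gives $0 \leq \XGFFms(x) - \XGFFms(y) \leq \kappa$ (now exchanging the roles of the two fields: $x$ being an $r\epsilon$-local max of $\XGFFms$ and $\Phiaux(y) \geq \Phiaux(x)$ yield $\XGFFms(x) - \XGFFms(y) \leq \Xsc(y) - \Xsc(x) \leq \osc_{\Lambda_{2r\epsilon}(x)} \Xsc$), hence $y \in \Gamma_\epsilon^\GFF(2\lambda)$ for $\epsilon$ small, and the near-maxima theorem again rules out $|y - x| \in [r\epsilon/2, 2r\epsilon]$, after which $\Lambda_{r\epsilon}(y) \subseteq \Lambda_{2r\epsilon}(x)$ gives $y \in \Theta_{r\epsilon}^s$.

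Finally, one takes a union bound over the (at most three) bad events — failure of the oscillation bound, and failure of the near-maxima estimate for each of the two implications — and sends $\epsilon \to 0$, then $\kappa \to 0$, then $r \to \infty$ in that order, which is exactly the order in which the estimates decay. The main obstacle is bookkeeping the order of limits: the oscillation bound in Lemma \ref{lem:oscillation-cont-field} holds for \emph{fixed} $r$ and $\kappa$ as $\epsilon \to 0$, while the near-maxima input needs $r \to \infty$; one must be careful that the distance threshold $r\epsilon/2$ chosen here is compatible with the window in which \cite[Lemma 3.3]{MR3729618} applies, and that the level $\lambda$ stays fixed throughout so that the constant $c$ in Theorem \ref{thm:near-maxima} does not interfere. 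A secondary subtlety is that $\Xsc$ is non-Gaussian, but this is already handled: the oscillation bound rests only on the Hölder estimate of Lemma \ref{lem:hoelder-expectation} for $X_s^h$ and the coupling bound of Theorem \ref{thm:coupling-bd} for $\Phi_s^\SG$, and the near-maxima input concerns only $\XGFFms$, which is genuinely Gaussian.
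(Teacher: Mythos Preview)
Your proposal is correct and follows essentially the same approach as the paper: the near-maxima separation result (Theorem~\ref{thm:near-maxima}) pins $\Pi^\GFF(x)$ or $\Pi^s(x)$ to within $r\epsilon/2$ of $x$, and the oscillation bound (Lemma~\ref{lem:oscillation-cont-field}) controls the height gap. Two small remarks. First, a point of ordering: the lower bound $0 \leq \Phiaux(x) - \Phiaux(\Pi^\GFF(x))$ relies on $x$ being an $r\epsilon$-local maximum of $\Phiaux$, but $\Pi^\GFF(x)$ lies a priori only in $\Lambda_{2r\epsilon}(x)$, so this inequality is only justified \emph{after} the distance estimate $|\Pi^\GFF(x)-x|\leq r\epsilon/2$ has been established; the paper orders the steps accordingly. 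Second, a minor variation: for the second implication the paper works on the near-maxima event $A_{\epsilon,\lambda,r}^s$ for the non-Gaussian field $\Phiaux$ (justified, as in the proof of Lemma~\ref{lem:r-local-maxima}, via $\Phiaux = \Phi_0^\GFF + \Phi_s^\Delta$ with $\Phi_s^\Delta$ uniformly bounded), whereas you route everything through $\Gamma_\epsilon^\GFF(2\lambda)$ using the oscillation bound and then apply near-maxima only for $\XGFFms$. Your variant is slightly more economical in that it never invokes the near-maxima result for the non-Gaussian field.
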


\begin{proof}
For $\lambda >0$ and $r>0$ denote
\begin{equation}
\label{eq:event-near-maxima-r-2}
A_{\epsilon, \lambda, r}^s \coloneqq \big\{ \forall u,v \in \Gamma_\epsilon^s(\lambda) \colon |u-v| \leq r\epsilon/2 \, \text{~or~} \, |u-v| \geq 2/r \big \}
\end{equation}
and similarly $A_{\epsilon, \lambda, r}^\GFF$ with $\XGFFms$ in place of $\Phiaux$.
Then by Theorem \ref{thm:near-maxima} the probabilities of the complements of these events tend to $0$ when taking the said limits,
so we may assume that the event $A_{\epsilon, \lambda, r}^s\cap A_{\epsilon, \lambda, r}^\GFF$ occurs.

Let $x\in \Theta_{r\epsilon}^s \cap \Gamma_\epsilon^s(\lambda)\cap \Gamma_\epsilon^\GFF(\lambda)$.
Then $\Phiaux(x), \XGFFms(x) \geq m_\epsilon -\lambda$ and hence, since $A_{\epsilon, \lambda, r}^s\cap A_{\epsilon, \lambda, r}^\GFF$ occurs, we have 
\begin{equation}
\label{eq:intermediate-scale}
r\epsilon/2 < |z-x| < 2/r \, \implies \, \Phiaux(z), \XGFFms(z) < m_\epsilon - \lambda.
\end{equation}
Since $\Pi^\GFF(x)$ is the maximum of $\XGFFms$ on $\Lambda_{2r\epsilon}(x)$,
we have $\XGFFms(\Pi^\GFF(x)) \geq m_\epsilon -\lambda$ and hence, $\Pi^\GFF(x) \in \Lambda_{r\epsilon/2}(x)$ by \eqref{eq:intermediate-scale}.
Hence, $\Pi^\GFF(x)=\max_{\Lambda_{r\epsilon}(x)}\XGFFms$, so $\Pi^\GFF(x) \in \Theta_{r\epsilon}^\GFF$ as claimed.
Moreover, we note that by the maximality of $\Pi^\GFF(x)$ and the definition of the oscillation, we have
\begin{equation}
0 \leq \Phiaux(x) - \Phiaux(\Pi^\GFF(x)) \leq \Xsc(x) - \Xsc(\Pi^\GFF(x)) \leq \osc_{\Lambda_{2r\epsilon(x)}} \Xsc.
\end{equation}
By Lemma \ref{lem:oscillation-cont-field} we have with probability one that $\osc_{\Lambda_{2r\epsilon}(x) } \Xsc < \kappa$ when $\epsilon \to 0$. The proof of \eqref{eq:phi-s-gff-local-max-2} is identical when exchanging the roles of $\Phiaux$ and $\XGFFms$.
\end{proof}

Now we have collected all results to give the proof of Proposition \ref{prop:changing-sets-of-extrema}.
\begin{proof}[Proof of Proposition \ref{prop:changing-sets-of-extrema}]
Fix $\delta>0$ and let $f$ be as in the statement.
Since $f$ has compact support, we can choose $\lambda_0>0$, so that $f$ vanishes outside of $\Omega \times [-\lambda_0/2,\infty)$.
Since $f$ is uniformly continuous, we can find for each $M>0$ and $r>0$ some $\kappa>0$ such that for all $\epsilon>0$ small enough and all $x,x' \in \Omega_\epsilon$ and all $h,h'\in \R$,
\begin{equation}
\label{eq:uniform-continuity-f}
|x-x'|\leq \frac{r\epsilon}{2} \text{~and~} |h-h'| \leq \kappa \qquad \implies \qquad
|f(x,h)-f(x',h')| \leq \delta/M.
\end{equation}
Now, for $\kappa>0$ as in \eqref{eq:uniform-continuity-f}, let $A_{\epsilon, M, r, \lambda,\kappa}^s$ be the intersection of the event in \eqref{eq:event-near-maxima-r-2} with the complement of the event in \eqref{eq:limit-oscillation-cont-field}
and $\{ \Gamma_\epsilon^s(\lambda/2)\subseteq \Gamma_\epsilon^\GFF(\lambda)\}$ and $\{|\Gamma_\epsilon^\GFF(\lambda)| \leq M\}$.
Note that on $A_{\epsilon, M, r, \lambda,\kappa}^s$ the implication \eqref{eq:phi-s-gff-local-max-1} holds as shown in Lemma \ref{lem:correspondence-maxima}.
Thus, for $\lambda \geq \lambda_0$ we have on this event
\begin{align}
&\sum_{\Theta_{r\epsilon}^s} f(x, \Phiaux(x)-m_\epsilon) = \sum_{\Theta_{r\epsilon}^s\cap \Gamma_\epsilon^s(\lambda)\cap \Gamma_\epsilon^\GFF(\lambda)} f(x, \Phiaux(x)-m_\epsilon) \nnb
&\leq |\Gamma_\epsilon^\GFF(\lambda)| \frac{\delta}{M} + \sum_{\Theta_{r\epsilon}^s\cap \Gamma_\epsilon^s(\lambda)\cap \Gamma_\epsilon^\GFF(\lambda)} f(\Pi^\GFF(x), \Phiaux(\Pi^\GFF(x))-m_\epsilon) \nnb
& \leq \delta + \sum_{\Theta_{r\epsilon}^\GFF \cap \Gamma_\epsilon^s(\lambda)\cap \Gamma_\epsilon^\GFF(\lambda)} f(x, \Phiaux(x)-m_\epsilon) \nnb
& \leq \delta + \sum_{\Theta_{r\epsilon}^\GFF} f(x,\Phiaux(x)-m_\epsilon).
\end{align}
On the equality we introduced the intersection with $\Gamma_\epsilon^s(\lambda)$ and $\Gamma_\epsilon^\GFF(\lambda)$ under the summation,
since $f$ has compact support and since the event $\{\Gamma_\epsilon^s(\lambda/2) \subseteq \Gamma_\epsilon^\GFF(\lambda)\}$ occurs.
The inequality in the second line follows from \eqref{eq:uniform-continuity-f} together with \eqref{eq:phi-s-gff-local-max-1}.
Finally, we use the estimate $|\Gamma_\epsilon^\GFF(\lambda)|\leq M$ and observe that on the event $A_{\epsilon, M, r, \lambda,\kappa}^s$ we have $\Pi^\GFF(x)\in \Theta_{r\epsilon}^\GFF$ by \eqref{eq:phi-s-gff-local-max-1}.
Hence, we may replace $\Theta_{r\epsilon}^s$ by $\Theta_{r\epsilon}^\GFF$ under the summation.
Note that we also rely on the positivity of $f$, since not every $\XGFFms$ local maximum might arise from the map $\Pi^\GFF$.
The last inequality is immediate using again positivity of $f$.

The reverse inequality follows from exchanging the roles of $\Phiaux$ and $\XGFFms$.
Using identical arguments we have
\begin{equation}
\sum_{\Theta_{r\epsilon}^\GFF} f(x, \Phiaux(x)-m_\epsilon) \leq \delta + \sum_{\Theta_{r\epsilon}^s} f(x, \Phiaux(x)-m_\epsilon) \qquad \text{on~} A_{\epsilon, M, r, \lambda, \kappa}^\GFF,
\end{equation}
where the event $A_{\epsilon, M, r, \lambda,\kappa}^\GFF$ is defined as the intersection of $A_{\epsilon, \lambda,r}^\GFF$,
the complement of the event in \eqref{eq:limit-oscillation-cont-field}, $\{\Gamma_\epsilon^\GFF(\lambda/2) \subseteq \Gamma_\epsilon^s(\lambda) \}$ and $\{\Gamma_\epsilon^s(\lambda)\leq M\}$.

To finish the proof we note that the event in \eqref{eq:changing-set-of-maxima} is contained in $A_{\epsilon, M, r, \lambda,\kappa}^s \cap A_{\epsilon, M, r, \lambda,\kappa}^\GFF$.
Hence, the limit \eqref{eq:changing-set-of-maxima} follows,
since Theorem \ref{thm:near-maxima}, Lemma \ref{lem:oscillation-cont-field}, Theorem \ref{thm:size-level-sets}  and Lemma \ref{lem:inclusions-level-sets} imply
\begin{equation}
\lim_{\lambda\to \infty} \limsup_{r\to \infty} \limsup_{M\to \infty} \limsup_{\kappa \to 0} \limsup_{\epsilon \to 0} \P((A_{\epsilon, M, r, \lambda,\kappa}^s)^c)=0
\end{equation}
and analogously for $A_{\epsilon, M, r, \lambda,\kappa}^\GFF$.
\end{proof}

\begin{proof}[Proof of Lemma \ref{lem:r-limit-laplace-functional}]
Let $A_{\delta, r, \epsilon}$ be the complement of the event in \eqref{eq:changing-set-of-maxima} and denote
\begin{equation}
\Sigma_1 =  \sum_{ \Theta_{r\epsilon}^s} f(x, \Phiaux(x)-m_\epsilon), \qquad
\Sigma_2 = \sum_{\Theta_{r\epsilon}^\GFF} f(x, \Phiaux(x) - m_\epsilon).
\end{equation}
We need to estimate the expectation of the difference $e^{-\Sigma_1}- e^{-\Sigma_2} = e^{-\Sigma_1}(1-e^{-(\Sigma_2-\Sigma_1)})$.
Using that $|\Sigma_1 - \Sigma_2|<\delta$ on $A_{\delta,r, \epsilon}$, we have
\begin{equation}
e^{-\Sigma_1}(1-e^{\delta})\mathbf{1}_{A_{\delta, r,\epsilon}} - e^{-\Sigma_2}\mathbf{1}_{A_{\delta, r,\epsilon}^c}
\leq e^{-\Sigma_1}- e^{-\Sigma_2} 
\leq e^{-\Sigma_1}(1-e^{-\delta}) \mathbf{1}_{A_{\delta, r, \epsilon}} + e^{-\Sigma_1}\mathbf{1}_{A_{\delta, r,\epsilon}^c}.
\end{equation}
Since $\Sigma_1, \Sigma_2 \geq 0$, we further have
\begin{equation}
-(e^{\delta}-1)- \mathbf{1}_{A_{\delta, r,\epsilon}^c}
\leq e^{-\Sigma_1}- e^{-\Sigma_2} 
\leq (1-e^{-\delta}) + \mathbf{1}_{A_{\delta, r,\epsilon}^c}.
\end{equation}
Taking expectation and the limits $\epsilon\to 0$ and then $r\to \infty$, we have by Proposition \ref{prop:changing-sets-of-extrema} 
\begin{equation}
\lim_{r\to\infty} \limsup_{\epsilon\to 0} \big | \E[e^{-\Sigma_1}]-\E[e^{-\Sigma_2}] \big | \leq (1- e^{-\delta} ) + (e^{\delta} -1) = e^{\delta} - e^{-\delta}.
\end{equation}
Since $\delta>0$ was arbitrary, \eqref{eq:r-limit-laplace-functional} follows.
\end{proof}

\subsection{Reduction to the extremal process of $\tilde \Phi_s^\SG$: proof of Lemma \ref{lem:reduction-to-phis}}
\label{sec:proof-of-reduction-to-phi-s}
In this section we comment on the proof of the reduction step from $\Phi_0^\SG$ to $\tilde \Phi_s^\SG$ (and hence to $\Phiaux$).
As indicated above this part shares the main ideas with Section \ref{sec:cont-field-at-local-maxima},
in particular the introduction of a limit $r\to \infty$ in order to apply Theorem \ref{thm:near-maxima}.
The main difference is that while the fields $\XGFFms$ and $\Xsc$ in Proposition \ref{prop:changing-sets-of-extrema} are independent,
the fields $\tilde \Phi_s^\SG$ and $R_s$ are not and hence,
the main task here is to show that the uniform convergence of $R_s$ allows to avoid the use of independence.
In what follows we do not present the proofs of the  intermediate results in full detail but rather comment on how to adjust the proofs of Section \ref{sec:cont-field-at-local-maxima}.
In light of the convergence of Laplace functionals, see Proposition \ref{prop:laplace-functional-ppp},
and the assumption \eqref{eq:convergence-ext-proc-Phis-PPP} we need to see that
\begin{equation}
\label{eq:changing-theta-and-Rs}
\lim_{s\to 0} \limsup_{\epsilon \to 0} \big| \E[ e^{-\sum_{\Theta_{r_\epsilon}^s} f(x,\Phi_0^\SG-R_s - m_\epsilon)}] - \E[ e^{-\avg{\extprocSG,f}}]  \big| = 0.
\end{equation}
Hence, the main work below is devoted to the proof of Proposition \ref{prop:changing-sets-Rs} below,
from which \eqref{eq:changing-theta-and-Rs} follows by similar arguments as in the proof of Lemma \ref{lem:r-limit-laplace-functional}.

\begin{proposition}
\label{prop:changing-sets-Rs}
Let $f\colon \Omega \times \R \to [0,\infty)$ be continuous and with compact support.
Then, for any $\delta >0$, we have
\begin{equation}
\label{eq:changin-sets-Rs}
\lim_{s\to 0}\limsup_{\epsilon\to 0} \P\Big(\big| \sum_{\Theta_{r_\epsilon}^s} f(x, \Phi_0^\SG-R_s-m_\epsilon ) - \sum_{\Theta_{r_\epsilon}^\SG} f(x, \Phi_0^\SG-m_\epsilon) \big | >\delta \Big) = 0.
\end{equation}
\end{proposition}

Similarly as in the proof of Proposition \ref{prop:changing-sets-of-extrema} we first introduce a limit $r\to \infty$ thanks to the Lemma \ref{lem:introducing-r-limit-phi-sg} below.

\begin{lemma}[Analogue to Lemma \ref{lem:r-local-maxima}]
\label{lem:introducing-r-limit-phi-sg}
Let $f\colon \Omega \times \R \to [0,\infty)$ be measurable and with compact support.
Then, for any sequence $(r_\epsilon)_\epsilon$ satisfying $r_\epsilon/\epsilon \to \infty$ and $r_\epsilon \to 0$, 
\begin{align}
\lim_{r\to \infty} \limsup_{\epsilon \to 0} \P\big(\avg{\eta_{r\epsilon, \epsilon}^\SG,f} &\neq \avg{\extprocSG,f} \big)=0, \\
\lim_{r\to \infty} \limsup_{\epsilon \to 0} \P\big( \sum_{\Theta_{r_\epsilon}^s} f(x, \Phi_0^\SG - m_\epsilon) &\neq \sum_{\Theta_{r\epsilon}^s} f(x, \Phi_0^\SG - m_\epsilon) \big)=0.
\end{align}
\end{lemma}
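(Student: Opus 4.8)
The plan is to repeat the proof of Lemma~\ref{lem:r-local-maxima} almost verbatim, replacing the auxiliary field $\tilde\Phi_s^\SG$ by the sine-Gordon field $\Phi_0^\SG$ and the difference field $\Phi_s^\Delta$ by $\Phi_0^\Delta$. Write $\Gamma_\epsilon^\SG(\lambda)=\{x\in\Omega_\epsilon\colon\Phi_0^\SG(x)\geq m_\epsilon-\lambda\}$ for the level sets of $\Phi_0^\SG$. First I would reduce the claim to showing that, for every fixed $\lambda>0$,
\[
\lim_{r\to\infty}\limsup_{\epsilon\to 0}\P\big(\Gamma_\epsilon^\SG(\lambda)\cap(\Theta_{r\epsilon}^\SG\setminus\Theta_{r_\epsilon}^\SG)\neq\emptyset\big)=0.
\]
This reduction is the same as in Lemma~\ref{lem:r-local-maxima}: since $r_\epsilon/\epsilon\to\infty$ we have $r\epsilon<r_\epsilon$ for $\epsilon$ small, so $\Theta_{r_\epsilon}^\SG\subseteq\Theta_{r\epsilon}^\SG$ and $\Theta_{r\epsilon}^\SG\,\Delta\,\Theta_{r_\epsilon}^\SG=\Theta_{r\epsilon}^\SG\setminus\Theta_{r_\epsilon}^\SG$; and choosing $\lambda$ large enough that $f$ vanishes outside $\Omega\times[-\lambda,\infty)$ and using $r_\epsilon\to 0$, the event $\{\avg{\eta_{r\epsilon,\epsilon}^\SG,f}\neq\avg{\extprocSG,f}\}$ is contained in the event above once $\epsilon$ is small enough.

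Next I would transfer this event to the Gaussian free field. On the event above there is a (random) point $x$ which is an $r\epsilon$-local maximum of $\Phi_0^\SG$ with $\Phi_0^\SG(x)\geq m_\epsilon-\lambda$ but not an $r_\epsilon$-local maximum, so there is $y\neq x$ with $\Phi_0^\SG(y)\geq\Phi_0^\SG(x)\geq m_\epsilon-\lambda$ and $r\epsilon<|x-y|\leq r_\epsilon\leq 1/r$ for $\epsilon$ small. By Theorem~\ref{thm:coupling}, $\Phi_0^\SG=\Phi_0^\GFF+\Phi_0^\Delta$, and by Theorem~\ref{thm:coupling-bd} there is a deterministic $\tilde C=O_\beta(|z|)$, independent of $\epsilon$, with $\max_{\Omega_\epsilon}|\Phi_0^\Delta|\leq\tilde C$. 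Hence $x,y\in\Gamma_\epsilon^\GFF(\lambda+\tilde C)$, so the event above is contained in $\{\exists u,v\in\Gamma_\epsilon^\GFF(\lambda+\tilde C)\colon\epsilon r<|u-v|<1/r\}$.

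Finally, since $\lambda+\tilde C$ is a fixed constant whereas $c\log\log r\to\infty$, one has $\Gamma_\epsilon^\GFF(\lambda+\tilde C)\subseteq\Gamma_\epsilon^\GFF(c\log\log r)$ for all $r$ large, so Theorem~\ref{thm:near-maxima} (in the form valid for all log-correlated Gaussian fields, \cite[Lemma~3.3]{MR3729618}, applied to the massive GFF $\Phi_0^\GFF$) gives
\[
\lim_{r\to\infty}\limsup_{\epsilon\to 0}\P\big(\exists u,v\in\Gamma_\epsilon^\GFF(\lambda+\tilde C)\colon\epsilon r<|u-v|<1/r\big)=0,
\]
which concludes the argument. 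I do not expect a genuine obstacle: the proof is essentially a transcription of that of Lemma~\ref{lem:r-local-maxima}, and the only additional ingredient — and the only subtle point — is the uniform-in-$\epsilon$ bound $\max_{\Omega_\epsilon}|\Phi_0^\Delta|\leq\tilde C$ of Theorem~\ref{thm:coupling-bd} (the reason $\beta<6\pi$ is imposed), which realises $\Phi_0^\SG$ as a uniformly bounded perturbation of the log-correlated Gaussian field $\Phi_0^\GFF$ and thereby makes Theorem~\ref{thm:near-maxima} applicable.
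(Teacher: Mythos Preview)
Your proposal is correct and follows precisely the approach indicated in the paper: the paper's proof simply says to repeat the argument of Lemma~\ref{lem:r-local-maxima} verbatim, noting that independence of the fields does not enter, and you have carried out exactly this transcription with $\Phi_0^\SG=\Phi_0^\GFF+\Phi_0^\Delta$ in place of $\Phiaux=\Phi_0^\GFF+\Phi_s^\Delta$. Your explicit reduction to the GFF level set $\Gamma_\epsilon^\GFF(\lambda+\tilde C)$ before invoking Theorem~\ref{thm:near-maxima} is a slight notational variant of the paper's phrasing (which says Theorem~\ref{thm:near-maxima} ``holds also for'' the perturbed field), but the content is identical.
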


\begin{proof}
The same arguments as in the proof of Lemma \ref{lem:r-local-maxima} can be used noting that independence of the fields involved does not enter.
\end{proof}

This result allows us again to focus on the $r\epsilon$-maxima of the fields involved.
The following Lemma is thus important in the proof of Proposition \ref{prop:changing-sets-Rs}.
\begin{lemma}
\label{lem:changin-sets-Rs-with-r-limit}
Let $f\colon \Omega\times \R\to [0,\infty)$ be continuous and with compact support.
Then we have for any $\delta >0$
\begin{equation}
\lim_{r\to \infty} \limsup_{\epsilon \to 0} \P\Big(\sum_{\Theta_{r\epsilon}^\SG} f(x,\Phi_0^\SG-m_\epsilon) - \sum_{\Theta_{r\epsilon}^s} f(x, \Phi_0^\SG - m_\epsilon) | > \delta\Big)= 0.
\end{equation}
\end{lemma}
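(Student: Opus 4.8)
The plan is to repeat the argument of Proposition~\ref{prop:changing-sets-of-extrema} almost verbatim, with the independent continuous field $X_s^c$ there replaced by the remainder $R_s$. The only properties of $X_s^c$ that entered that proof were: it is uniformly bounded; it is H\"older continuous, so that its oscillation over a ball of radius $2r\epsilon$ is small; and $\Psi_s$ is a log-correlated Gaussian field up to a uniformly bounded perturbation. Independence of $X_s^c$ and $X_s^\GFF$ was used only to promote pointwise H\"older and tail estimates to estimates that are uniform over the random level sets. The key observation is that, by Theorem~\ref{thm:coupling-bd}, the remainder $R_s$ enjoys all of these properties \emph{deterministically}, so its non-independence from the fields causes no difficulty. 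Indeed, using $\tilde\Phi^\SG_s=\Phi^\GFF_0+\Phi^\Delta_s$ and $\Phi^\SG_0=\Phi^\GFF_0+\Phi^\Delta_0$ one has $R_s=\Phi^\SG_0-\tilde\Phi^\SG_s=\Phi^\Delta_0-\Phi^\Delta_s$, so by \eqref{e:PhiDelta-bd0t}, \eqref{e:PhiDelta-bd} and \eqref{e:delPhiDelta-bd} there are a deterministic constant $\tilde C$ and, for each fixed $s>0$, a deterministic constant $C_s$ and exponent $\gamma=\gamma(\beta)\in(0,1]$ (with a logarithmic correction at $\beta=4\pi$) with $\max_{\Omega_\epsilon}|R_s|\le\tilde C$ and $|R_s(x)-R_s(y)|\le C_s|x-y|^\gamma$, uniformly in $\epsilon$.

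With this in hand I would carry out the three preparatory steps of Section~\ref{sec:cont-field-at-local-maxima} in deterministic form. First, the level-set comparison (the analogue of Lemma~\ref{lem:inclusions-level-sets}) becomes immediate: $\Gamma_\epsilon^\SG(\lambda)\subseteq\Gamma_\epsilon^s(\lambda+\tilde C)\subseteq\Gamma_\epsilon^s(2\lambda)$ for $\lambda\ge\tilde C$, and conversely, for every $\epsilon$; moreover $\Gamma_\epsilon^\SG(\lambda),\Gamma_\epsilon^s(\lambda)\subseteq\Gamma_\epsilon^\GFF(\lambda+\max_{\Omega_\epsilon}|\Phi^\Delta_0|)$, so Theorem~\ref{thm:size-level-sets} and \eqref{eq:level-set-size-large} transfer to these level sets with adjusted constants, in particular $\lim_{M\to\infty}\limsup_{\epsilon\to0}\P(|\Gamma_\epsilon^\SG(\lambda)|>M)=0$ and likewise for $\Gamma_\epsilon^s(\lambda)$. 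Second, the oscillation estimate (the analogue of Lemma~\ref{lem:oscillation-cont-field}) is simply the deterministic bound $\max_{x\in\Omega_\epsilon}\osc_{\Lambda_{2r\epsilon}(x)}R_s\le C_s(2r\epsilon)^\gamma$, which for fixed $s$ and $r$ tends to $0$ as $\epsilon\to0$; no conditioning is needed. Third, for the correspondence of maxima (the analogue of Lemma~\ref{lem:correspondence-maxima}) I would note that both $\Phi^\SG_0$ and $\tilde\Phi^\SG_s$ equal the log-correlated Gaussian field $\Phi^\GFF_0$ plus a uniformly bounded field, so Theorem~\ref{thm:near-maxima} (via \cite[Lemma~3.3]{MR3729618}) applies to each, exactly as in the proof of Lemma~\ref{lem:r-local-maxima}; writing $\Pi^s(x)=\argmax_{\Lambda_{2r\epsilon}(x)}\tilde\Phi^\SG_s$ and $\Pi^\SG(x)=\argmax_{\Lambda_{2r\epsilon}(x)}\Phi^\SG_0$, on the intersection of the two near-maxima events one obtains that every $x\in\Theta_{r\epsilon}^\SG\cap\Gamma_\epsilon^\SG(\lambda)\cap\Gamma_\epsilon^s(\lambda)$ has $\Pi^s(x)\in\Theta_{r\epsilon}^s$, $|\Pi^s(x)-x|\le r\epsilon/2$ and $0\le\Phi^\SG_0(x)-\Phi^\SG_0(\Pi^s(x))\le\osc_{\Lambda_{2r\epsilon}(x)}R_s$, and symmetrically with the roles of the two fields exchanged.

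Finally I would assemble these exactly as in the proof of Proposition~\ref{prop:changing-sets-of-extrema}: given $\delta>0$, choose $\lambda_0$ with $f$ supported in $\Omega\times[-\lambda_0/2,\infty)$, use uniform continuity of $f$ to fix, for each $M$ and $r$, a $\kappa>0$ as in \eqref{eq:uniform-continuity-f}, and on the good event --- the two near-maxima events, the event $\{\max_{x\in\Omega_\epsilon}\osc_{\Lambda_{2r\epsilon}(x)}R_s\le\kappa\}$, the inclusions $\Gamma_\epsilon^\SG(\lambda/2)\subseteq\Gamma_\epsilon^s(\lambda)$ and $\Gamma_\epsilon^s(\lambda/2)\subseteq\Gamma_\epsilon^\GFF(\lambda)$, and $\{|\Gamma_\epsilon^\SG(\lambda)|\vee|\Gamma_\epsilon^s(\lambda)|\le M\}$ --- the same chain of four inequalities (restrict both sums to the common level set $\Gamma_\epsilon^\SG(\lambda)\cap\Gamma_\epsilon^s(\lambda)$ using compact support of $f$; replace $x$ by $\Pi^s(x)$ using uniform continuity of $f$; reindex by $\Pi^s$ using $|\Gamma_\epsilon^s(\lambda)|\le M$ and positivity of $f$; drop the level-set restriction by positivity) yields, for $\lambda\ge\lambda_0$, $\sum_{\Theta_{r\epsilon}^\SG}f(x,\Phi^\SG_0-m_\epsilon)\le\delta+\sum_{\Theta_{r\epsilon}^s}f(x,\Phi^\SG_0-m_\epsilon)$, and the reverse inequality follows on the analogous good event with $\Pi^\SG$ in place of $\Pi^s$. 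The event in the lemma is then contained in the union of the complements of these good events, and for every fixed $\lambda\ge\lambda_0$ the $\limsup_{\epsilon\to0}$ of that union's probability tends to $0$ when one sends $\kappa\to0$, then $M\to\infty$, then $r\to\infty$, then $\lambda\to\infty$, by the three steps above and Theorem~\ref{thm:near-maxima}; since the quantity in the lemma does not involve $\lambda$, this proves the claim.

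The main difficulty is essentially one of recognition: once one sees that the deterministic estimates of Theorem~\ref{thm:coupling-bd} for $\Phi^\Delta_0-\Phi^\Delta_s$ play exactly the role that independence and Gaussian tails played for $X_s^c$ in Section~\ref{sec:cont-field-at-local-maxima}, the proof is a routine transcription. The one point deserving care is that for $4\pi\le\beta<6\pi$ the field $\Phi^\Delta_0$, and hence $R_s$, is only H\"older of exponent $2-\beta/4\pi<1$; but this exponent is still strictly positive, so $\osc_{\Lambda_{2r\epsilon}(x)}R_s\to0$ as $\epsilon\to0$, which is all that the oscillation step requires.
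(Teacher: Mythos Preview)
Your proposal is correct and follows essentially the same route as the paper: verify that the analogues of Lemma~\ref{lem:inclusions-level-sets} and Lemma~\ref{lem:oscillation-cont-field} hold with $(\Phi_0^\SG,\tilde\Phi_s^\SG,R_s)$ in place of $(\Psi_s,X_s^\GFF,X_s^c)$, note that Theorem~\ref{thm:near-maxima} applies to both fields since each is $\Phi_0^\GFF$ plus a uniformly bounded perturbation, and then rerun the proof of Proposition~\ref{prop:changing-sets-of-extrema}. Your write-up is in fact more careful than the paper's own proof, which simply asserts that these are ``the only results that enter'' Proposition~\ref{prop:changing-sets-of-extrema}; in particular you correctly observe that the deterministic H\"older bound from Theorem~\ref{thm:coupling-bd} makes the oscillation step trivial (no conditioning on level sets is needed), and that the level-set inclusions hold with probability one for $\lambda$ large.
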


\begin{proof}
We first observe that Lemma \ref{lem:inclusions-level-sets} and Lemma \ref{lem:oscillation-cont-field} also hold,
when the fields $(\Phiaux, \XGFFms,\Xsc)$ are replaced by $(\Phi_0^\SG,\tilde \Phi_s^\SG, R_s)$.
Indeed, using the coupling from Theorem \ref{thm:coupling}, we have for $C$ large enough and uniformly in $\epsilon$
\begin{equation}
\tilde \Phi_s^\SG - C \leq \Phi_0^\SG \leq \tilde \Phi_s^\SG + C.
\end{equation}
It follows that for $\lambda $ large enough, we have for all $\epsilon>0$
\begin{align}
\P(\Gamma_\epsilon^s(\lambda)  \subseteq \Gamma_\epsilon^\SG(2\lambda) )=1\\
\P(\Gamma_\epsilon^\SG(\lambda)  \subseteq \Gamma_\epsilon^s(2\lambda) )=1,
\end{align}
which means the limit $\lambda\to \infty$ is not even needed here.
Moreover, we have for any $\lambda >0$, any $\kappa >0$ and any $r\geq 1$
\begin{equation}
\limsup_{\epsilon \to 0} \P(\max_{x\in \Gamma_\epsilon^s(\lambda)} \osc_{\Lambda_{2r\epsilon}(x)} R_s >\kappa) = 0
\end{equation}
using the H\"older continuity of $R_s$ and noting that independence is not used in the proof of Lemma \ref{lem:oscillation-cont-field}.
Thus, Lemma \ref{lem:changin-sets-Rs-with-r-limit} follows,
since Lemma \ref{lem:inclusions-level-sets} and Lemma \ref{lem:oscillation-cont-field} are the only results that enter the proof of Proposition \ref{prop:changing-sets-of-extrema}.  
\end{proof}

\begin{proof}[Proof of Proposition \ref{prop:changing-sets-Rs}]
Let $f$ be as in the statement.
Using a triangle inequality we see that the event in \eqref{eq:changin-sets-Rs} is contained in the union of 
\begin{align}
\label{eq:event-1}
\Big\{\big| \sum_{\Theta_{r_\epsilon}^s} f(x, \Phi_0^\SG-R_s-m_\epsilon ) - \sum_{\Theta_{r_\epsilon}^s} f(x, \Phi_0^\SG-m_\epsilon) \big |>\delta/2 \Big\}
\\
\label{eq:event-2}
\Big\{ \big| \sum_{\Theta_{r_\epsilon}^s} f(x, \Phi_0^\SG-m_\epsilon ) - \sum_{\Theta_{r_\epsilon}^\SG} f(x, \Phi_0^\SG-m_\epsilon) \big |>\delta/2 \Big\}.
\end{align}
To see that the probability of \eqref{eq:event-1} converges to $0$ in the said limits, we first observe that since $f$ has compact support there is $\lambda>0$ such that
\begin{equation}
\supp f \subseteq \Omega \times [-\lambda, \infty),
\end{equation}
and thus, the sum in \eqref{eq:event-1} is in fact over $\Theta_{r_\epsilon}^s \cap \Gamma_\epsilon^\SG(\lambda-C)$ with $C> \max_{\Omega_\epsilon}|R_s|$. 
Using that $f$ is uniformly continuous and the fact that $\max_{\Omega_\epsilon} R_s\to 0$ as $s\to 0$ uniformly in $\epsilon>0$, the left side in \eqref{eq:event-1} can be estimated by
\begin{equation}
\label{eq:estimate-event-1}
\sum_{\Theta_{r_\epsilon}^s \cap \Gamma_\epsilon^\SG( \lambda-C)  }\big| f(x,\Phi_0^\SG - R_s -m_\epsilon) - f(x,\Phi_0^\SG-m_\epsilon)   \big| \leq |\Gamma_\epsilon^\SG( \lambda-C) | o(1)
\end{equation}
with $o(1) \to 0$ uniformly in $\epsilon$ as $s\to 0$.
But by \eqref{eq:level-set-size-large} applied to the field $\Phi_0^\SG$ the probability of the event on the right hand side of \eqref{eq:estimate-event-1} converges to $0$, since
\begin{equation}
\P(|\Gamma_\epsilon^\SG(\lambda)| o(1) > \delta/2  ) 
\leq o(1).
\end{equation}

To see that also the probability of the event in \eqref{eq:event-2} converges to $0$, we use Lemma \ref{lem:changin-sets-Rs-with-r-limit} and Lemma \ref{lem:introducing-r-limit-phi-sg}
which allow us to cater the different sets of vertices in the summations.
\end{proof}

Combining the results from this section, we can now give the proof of Lemma \ref{lem:reduction-to-phis}.
\begin{proof}[Proof of Lemma \ref{lem:reduction-to-phis}]
We first prove the existence of the random measure $\ZDM^\SG(dx)$ and the claimed convergence in distribution.
To this end, we prove that for any continuous and non-negative function $g\colon \Omega \to \R$,
the sequence $\big(\ZDM_s(g)\big)_s$ is tight, where we use the notation
\begin{equation}
\ZDM_s(g) \equiv \int_{\Omega} g(x) \ZDM_s(dx).
\end{equation}
By linearity we may assume that $0\leq g <1$ and define for $h_0 \in \R$
\begin{align}
\label{eq:non-cont-f}
f(x,h) = 
\begin{cases}
0  \qquad & h < h_0 - 1 \text{ and } h > 2h_0 + 1
,\\
-\log (1-g)  \qquad &h_0 \leq h  \leq 2 h_0
\end{cases}
\end{align}
with a linear interpolation on $h\in [h_0-1, h_0]$ and $h \in [2h_0, 2h_0+1]$.
Since $f$ as defined in \eqref{eq:non-cont-f} is non-negative, continuous and with compact support, \eqref{eq:convergence-ext-proc-Phis-PPP} holds for $f$.
Note that the expectation on the right hand side of \eqref{eq:convergence-ext-proc-Phis-PPP} can be bounded from above by
\begin{equation}
\E\Big[\exp\big( -\ZDM_s(g)  \alpha^{-1} (e^{-\alpha h_0} - e^{-\alpha 2 h_0} )  \big)\Big],
\end{equation}
while the expectation on the left hand side of \eqref{eq:convergence-ext-proc-Phis-PPP} can be bounded from below by
\begin{align}
\E\Big[\exp\Big({\sum_{x\in \Theta_{r_\epsilon}^s } \log (1-g(x)) \mathbf{1}_{\{2h_0 +1 > \Phi_0^\SG(x) - R_s(x)-m_\epsilon > h_0-1 \}}  }\Big)\Big] &\geq \P(\max_{\Omega_\epsilon}(\Phi_0^\SG - R_s) - m_\epsilon \leq h_0 - 1\ ) \nnb
&\geq \P(\max_{\Omega_\epsilon} \Phi_0^\GFF - m_\epsilon \leq h_0 - 1- C ),
\end{align}
where we used that on the event $\{\max_{\Omega_\epsilon}(\Phi_0^\SG - R_s) - m_\epsilon \leq h_0-1\}$ the random variable in the ex\-pect\-ation is equal to $1$ and  that $\max_{\Omega_\epsilon}|\Phi_s^\Delta| \leq C$ uniformly in $\epsilon>0$ and $s\geq 0$ by Theorem \ref{thm:coupling-bd}. Thus, we have
\begin{equation}
\limsup_{\epsilon\to 0} \P(\max_{\Omega_\epsilon} \Phi_0^\GFF - m_\epsilon \leq h_0 -1 - C ) \leq \E\Big[\exp\big( -\ZDM_s(g)  \alpha^{-1} (e^{-\alpha h_0} - e^{-\alpha 2 h_0} )  \big)\Big].
\end{equation}
From here on, the same arguments involving the tightness of $\big(\max_{\Omega_\epsilon} \Phi_0^\GFF - m_\epsilon\big)_\epsilon$ as in the proof of \cite[Lemma 4.1]{MR4399156} can be used to show tightness of $\big(\ZDM_s(g)\big)_s$.

Moreover, \eqref{eq:changing-theta-and-Rs} implies that the Laplace transforms of $\ZDM_s(g)$ converge as $s\to 0$
and hence, L\'evy's continuity theorem for Laplace transforms implies that $\big(\ZDM_s(g)\big)_s$ converges weakly.
Since $\Omega$ is a second countable locally compact Hausdorff space,
Prohorov's theorem for random measures as stated in \cite[Theorem 4]{MR571672} implies the existence of a random measure $\ZDM^\SG(dx)$ such that as $s\to 0$
\begin{equation}
\label{eq:convergence-Z-s-to-Z-SG}
\ZDM_s(dx) \to \ZDM^\SG(dx)
\end{equation}
in distribution.

We now prove that $\ZDM^\SG(A)>0$ a.s.\ for every non-empty open $A \subseteq \Omega$. To this end, we first observe that by Theorem \ref{thm:convergence-laplace-functionals-phis}
we have for any Borel set $A\subseteq \Omega$ and $x\in \R$
\begin{equation}
\label{eq:max-over-set-A}
\lim_{\epsilon\to 0} \P\big( \max_{A\cap \Omega_\epsilon}\Phiaux - m_\epsilon \leq x \big) = \E[e^{-\ZDM_s(A)\frac{1}{\alpha}e^{-\alpha x}}].
\end{equation}
If $A$ is non-empty and open, then we have by Theorem \ref{thm:convergence-laplace-functionals-phis} that $\ZDM_s(A)>0$ a.s.
Thus, for such $A$, the right hand side in \eqref{eq:max-over-set-A} is a distribution function,
since we have by the dominated convergence theorem as $x\to -\infty$ 
\begin{equation}
\E[e^{-\ZDM_s(A)\frac{1}{\alpha}e^{-\alpha x}}] \to \P\big(\ZDM_s(A)=0\big) = 0.
\end{equation}
In particular, the sequence $\big( \max_{A\cap \Omega_\epsilon} \Phiaux - m_\epsilon\big)_\epsilon$ is tight,
since the right hand side in \eqref{eq:max-over-set-A} is a distribution function.
By Theorem \ref{thm:coupling-bd}, we also have that the sequence $\big( \max_{A\cap \Omega_\epsilon} \Phi_0^\SG - m_\epsilon\big)_\epsilon$ is tight.
Thus, for a given $\kappa >0$, we can choose $x_0$ small enough, such that we have for all $x<x_0$
\begin{equation}
\label{eq:ZDM-positive-tightness-max-over-set-A}
\lim_{\epsilon\to 0} \P\big( \max_{A\cap \Omega_\epsilon}\Phiaux - m_\epsilon \leq x \big) \leq \limsup_{\epsilon\to 0} \P (\max_{A\cap \Omega_\epsilon} \Phi_0^\SG - m_\epsilon \leq x+C) \leq \kappa
\end{equation}
Let now $p=\P(\ZDM^\SG(A)=0)$. By the weak convergence $\ZDM_s(A)\to\ZDM^\SG(A)$ and \eqref{eq:max-over-set-A}, we then have
\begin{align}
\label{eq:ZDM-positive-prob}
p  + \E\big[e^{-\ZDM^\SG(A)\frac{1}{\alpha}e^{-\alpha x}}\mathbf{1}_{\{ \ZDM^\SG(A)>0 \} } \big ] 
 &= \E\big[e^{-\ZDM^\SG(A)\frac{1}{\alpha}e^{-\alpha x}} \big ]  
 = \lim_{s\to 0} \lim_{\epsilon\to 0} \P\big( \max_{A\cap \Omega_\epsilon}\Phiaux - m_\epsilon \leq x \big) \nnb
 &\leq  \limsup_{\epsilon\to 0} \P (\max_{A\cap \Omega_\epsilon} \Phi_0^\SG - m_\epsilon \leq x+C) \leq \kappa. 
\end{align}
By the dominated convergence theorem, the expectation on the left hand of \eqref{eq:ZDM-positive-prob} vanishes when taking $x\to -\infty$, which proves that $p<\kappa$.
Since $\kappa$ was arbitrary, it follows that $p=0$.

Finally, the claimed convergence \eqref{eq:convergence-extproc-SG} follows from taking the limit $s\to 0$ in \eqref{eq:laplace-functional-conv-for-eta-s} together with \eqref{eq:convergence-Z-s-to-Z-SG} and \eqref{eq:changing-theta-and-Rs}.
\end{proof}

\appendix
\section{Extremal process associated Gaussian free fields on the torus}
\label{app:generalisation-to-massive-gff}
In this section we comment on the proof of Theorem \ref{thm:ext-process-massive-GFF}, which is nearly identical to \cite[Theorem 1.1]{MR3509015}, except that the setting is slightly different:
instead of a massless Gaussian field on the unit box $[0,1]^2$ with Dirichlet boundary condition,
the field $\XGFFms$ has mass $\sqrt{m^2 +1/s}$ and periodic boundary condition. Recall that the regularised extremal process $\extprocGFFms$ is given by
\begin{equation}
\label{eq:definition-extremal-process-GFF-ms}
\extprocGFFms = \sum_{x\in \Theta_{r_\epsilon}^\GFF} \delta_{(x, \XGFFms(x) - m_\epsilon)},
\end{equation}
where $\Theta_{r_\epsilon}^\GFF$ is the set of $r_\epsilon$-local maxima defined analogously to \eqref{eq:SG-r-local-max}, with $r_\epsilon$ such that $r_\epsilon\to 0$ and $r_\epsilon/\epsilon \to \infty$. 

In what follows we carefully examine the proof of  \cite[Theorem 1.1]{MR3509015} and verify that all steps also work with the massless Gaussian free field with Dirichlet boundary condition replaced by $\XGFFms$.

For the GFF on the unit box $[0,1]^2$, the extreme values are with high probability attained in the bulk, i.e.\ away from the boundary.
To make this rigorous, a somewhat technical analysis of field values close to the boundary is necessary.
In our case, this discussion is not necessary and hence, the proof is expected to simplify in this regard.

Further, we note that the mass term $\sqrt{m^2 + 1/s}$ changes the covariance of $\XGFFms$ compared to the massless GFF in two minor ways.
Firstly, it adds a constant term depending on $m,s$ to correlations on small scales.
Secondly, the decomposition of $\XGFFms$ into a coarse and a fine field, which is used in \cite{MR3433630}, is slightly different.
We record these properties of the field $\XGFFms$ in the following two lemmas.
\begin{lemma}[Analogue to {\cite[Lemma 2.6]{MR3509015}}]
\label{lem:covariance-XsGFF}
Let $c_{m,s}^\epsilon = \cov(X_s^\GFF)$. Then we have for $x,y \in \Omega_\epsilon$ with $|x-y|\sqrt{m^2+1/s} \leq 1$
\begin{align}
\label{eq:covariance-estimate-XsGFF-1}
c_{m,s}^\epsilon (x,x) &= \frac{1}{2\pi} \log\frac{1}{\epsilon} + O_{m,s}(1), \\
\label{eq:covariance-estimate-XsGFF-2}
c_{m,s}^\epsilon(x,y) &= -\frac{1}{2\pi} \log |x-y| + O_{m,s}(1).
\end{align}
\end{lemma}

\begin{proof}
The first statement \eqref{eq:covariance-estimate-XsGFF-1} is precisely \cite[Lemma 3.3]{MR4303014} with $t=\infty$,
rescaling $x\to x/\epsilon$ and replacing $m$ by $\sqrt{m^2+1/s}f$, i.e.\
\begin{equation}
c_{m,s}^\epsilon (x,x) = \frac{1}{2\pi} \log\frac{1}{\epsilon\sqrt{m^2+1/s}} + O(1)
= \frac{1}{2\pi} \log\frac{1}{\epsilon} + O_{m,s}(1).
\end{equation}

The estimate \eqref{eq:covariance-estimate-XsGFF-2} can be obtained from \cite[Lemma A.2]{MR4303014} with the same adjustments as before, i.e.\
\begin{equation}
c_{m,s}^\epsilon (x,y) = - \frac{1}{2\pi} \log \big( |x-y|\sqrt{m^2+1/s}  \wedge 1 \big) + O(1)
= - \frac{1}{2\pi} \log|x-y| + O_{m,s}(1).
\end{equation}
\end{proof}

To state the second result we first recall the box decomposition from \cite{MR494541}; see also \cite[Section 4.2]{MR4399156} for additional details.
Let $\Gamma \subseteq \Omega$ be the union of horizontal and vertical lines intersecting at the vertices $\frac{1}{K}\Z^2 \cap \Omega$ that subdivides $\Omega$ into boxes $V_i \subset \Omega$, $i=1,\dots, K^2$ of side length $1/K$,
such that $\Omega = \cup_{i=1}^{K^2} V_i \cup \Gamma$.
Assuming that $1/K$ is a multiple of $\epsilon$,
the grid $\Gamma$ can be regarded as a subset of $\Omega_\epsilon$ and
$\Omega_\epsilon = \cup_{i=1}^{K^2} (V_i \cap \Omega_\epsilon) \cup \Gamma$.
Below, we denote by $V_i$ both the subset of $\Omega$ and the corresponding lattice version as subset of $\Omega_\epsilon$. 
Moreover, we define for $\delta \in (0,1)$ and $i\in \{1, \ldots, K^2\}$
\begin{equation}
\label{eq:box-with-boundary}
  V_i^{\delta}= \{x \in V_i \colon \dist(x, \Gamma) \geq \delta/K \}.
\end{equation}

\begin{lemma}
\label{lem:decomposition-gff-boxes}
The field $\XGFFms$ on $\Omega_\epsilon$ admits a decomposition $\XGFFms \stackrel{d}{=} \tilde X_{s,K}^f + \tilde X_{s,K}^c$,
where $\tilde X_{s,K}^f|_{V_i}, i=1,\ldots, K^2$ is a collection of independent Gaussian free fields with mass $\sqrt{m^2+1/s}$ and zero boundary condition on $\partial V_i$ and $\tilde X_{s,K}^c$ is independent with
\begin{equation}
\label{eq:covariance-coarse-GFF}
 \cov(\tilde X_{s,K}^c) =  (-\Delta+m^2 + 1/s)^{-1} - (-\Delta_\Gamma+m^2 + 1/s)^{-1}.
\end{equation}
\end{lemma}

\begin{proof}
Let $\Delta_\Gamma$ be the Laplacian on $\Omega$ with Dirichlet boundary conditions on $\Gamma$,
and let $\Delta$ be the Laplacian with periodic boundary conditions on $\Omega$.
Then, as in \cite[Section 4.2]{MR4399156} we have $-\Delta_\Gamma \geq -\Delta$
and thus, $(-\Delta+m^2 + 1/s)^{-1} \geq (-\Delta_\Gamma+m^2 + 1/s)^{-1}$ as quadratic form inequalities.
Hence, we can choose independent fields $\tilde X_{s,K}^f$ and $\tilde X_{s,K}^c$, with covariances
\begin{align}
  \label{eq:tilde-Xf}
  \cov(\tilde X_{s,K}^f) &= (-\Delta_\Gamma + m^2 + 1/s)^{-1}\\
    \label{eq:tilde-Xc}
  \cov(\tilde X_{s,K}^c) &=  (-\Delta+m^2 + 1/s)^{-1} - (-\Delta_\Gamma+m^2 + 1/s)^{-1},
\end{align}
such that $\XGFFms \stackrel{d}{=} \tilde X_{s,K}^f + \tilde X_{s,K}^c$.
Moreover, since $\Delta_\Gamma$ has Dirichlet boundary condition, the field $\tilde X_{s,K}^f$ has the claimed properties.
\end{proof}

Since the mass term $\sqrt{m^2 + 1/s}$ only leads to an additive constant in Lemma \ref{lem:covariance-XsGFF},
it is clear that it becomes irrelevant for the small scale correlations for a fixed $s>0$ when taking first $\epsilon \to 0$.
Likewise, when sending $K\to \infty$ with $m,s$ fixed,
we may replace the massive Gaussian free fields on the boxes $V_i$ by true massless Gaussian free fields with Dirichlet boundary condition thanks to Lemma \ref{lem:G-concentration} below.
For a proof, we refer to \cite[Lemma 4.2]{MR4399156}.

\begin{lemma}
\label{lem:G-concentration}
Let $G$ be a centred Gaussian field with covariance 
$(-\Delta_\Gamma)^{-1} - (-\Delta_\Gamma + m^2 + 1/s)^{-1}$.
Then for $K^2 \geq m^{2}+ 1/s$,
\begin{equation}
\P\pB{ \max_{\Omega_\epsilon} G > t}
\lesssim K^2 \exp\left(-\frac{ct^2}{\sigma^2}\right),
\qquad
\sigma^2 = \frac{m^2+1/s}{K^2} + e^{-cK^2s}.
\end{equation}
In particular, $\P(\max G > u) \to 0$ for any $u>0$ as $K\to\infty$ with $m$ and $s$ fixed. 
\end{lemma}

The main steps in the proof of \cite[Theorem 1.1]{MR3509015} are \cite[Theorem 3.1]{MR3509015},
which establishes a distributional invariance for the limiting points of the sequence $(\extprocGFFms)_\epsilon$, and \cite[Theorem 3.4]{MR3509015},
which proves that all such limits have the same intensity measures.

In what follows we review the proofs of these results in detail, highlight the parts where the boundary condition and the covariance of the field enters
and explain why these arguments also hold under our assumptions.
We start with the distributional invariance.

\subsection{Distributional invariance of limit points and convergence to a Poisson point process}
\label{app:distributional-invariance}
Let $\eta$ be a subsequential limit of the sequence $(\extprocGFFms)_\epsilon$.
Since  the total mass of such a limit point is infinite almost surely,
we may write $\eta = \sum_{i\in \N} \delta_{(x_i, h_i)}$, where $\delta$ denotes the Dirac measure and $(x_i,h_i)\in \Omega\times \R$ are random.
Analogously to \cite[Section 3.1]{MR3509015}, we need to see that that the law of $\eta$ is invariant under the addition of independent Brownian motions with drift $\frac{\alpha}{2} t$ for $\alpha=\sqrt{8\pi}$ to the height component.
More precisely, letting
\begin{equation}
\eta_t = \sum_{i\in \N} \delta_{(x_i, h_i + B_t^i -\frac{\alpha}{2}t)},
\end{equation}
where $(B_t^i)_{t\geq0}$ are independent standard Brownian motions, our goal is to prove $\eta_t \stackrel{d}{=}\eta$ for $t\geq 0$.

To state the following theorem we define for any measurable function $f\colon \Omega \times \R\to [0,\infty)$
\begin{equation}
\label{eq:f-with-Brownian-motion}
f_t(x,h) = -\log \E [e^{-f(x,h+ B_t- \frac{\alpha}{2} t)}].
\end{equation}

\begin{theorem}[Analogue to {\cite[Theorem 3.1]{MR3509015}} ]
\label{thm:distributional-invariance}
Let $\eta$ be a subsequential distributional limit of the sequence $(\extprocGFFms)_\epsilon$.
Then for any continuous function $f\colon \Omega\times \R \to [0,\infty)$ with compact support and all $t\geq 0$,
\begin{equation}
\label{eq:distributional-invariance}
\E[ e^{-\avg{\eta,f}} ] =\E[ e^{-\avg{\eta, f_t}} ].
\end{equation}
\end{theorem}

\begin{proof}
We verify that the arguments in \cite[Section 4.2]{MR3509015} are also valid under our assumptions.
The main idea is to decompose the field of interest into a sum of independent fields $\hat h'$ and $\hat h''$, so that $\hat h'$ captures the extremal behaviour of the original field and $\hat h''$ is has variance of order $1$.
For the massless GFF with Dirichlet boundary condition,
this decomposition is given in \cite[(4.29)]{MR3509015}.
In our case the analogous choices for $\hat h'$ and $\hat h''$ are
\begin{equation}
\label{eq:decomposition-XGFFms}
\hat h' = \sqrt{1 -\frac{t}{\frac{1}{2\pi} \log \frac{1}{\epsilon}} } h'
\, \text{ and } \, 
\hat h''= \sqrt{\frac{t}{\frac{1}{2\pi} \log \frac{1}{\epsilon}}} h'',
\end{equation}
where $h'$ and $h''$ are independent with $h', h'' \stackrel{d}{=}\XGFFms$.
Note that we have
\begin{equation}
\XGFFms \stackrel{d}{=} \hat h'+ \hat h''.
\end{equation}
We denote the sets of $r_\epsilon$-local extrema with respect to $\XGFFms$ and $h'$ by $\Theta_{r_\epsilon}$ and $\Theta_{r_\epsilon}'$.

The first step is to introduce an additional limit $r\to \infty$ and replace the extremal process $\extprocGFFms$ by $\eta_{r\epsilon, \epsilon}^{\GFF,s}$; see \cite[Lemma 4.4]{MR3509015}.
This approach was also used in various parts of this work, e.g.\ in Lemma \ref{lem:r-local-maxima} and Lemma \ref{lem:introducing-r-limit-phi-sg}.
In this argument the covariance of $\XGFFms$ enters only through the application of \cite[Theorem 2.4]{MR3509015},
which is Theorem \ref{thm:near-maxima} in our case.
Note that the latter result holds for $\XGFFms$ thanks to \cite[Lemma 3.3]{MR3729618},
since the covariance of $\XGFFms$ satisfies the assumption \cite[(A1)]{MR3729618} by Lemma \ref{lem:covariance-XsGFF}.

The next result to verify is \cite[Proposition 4.5]{MR3509015}, which guarantees that in the definition of the extremal process $\extprocGFFms$ in \eqref{eq:definition-extremal-process-GFF-ms},
we may replace the random set $\Theta_{r_\epsilon}$ by $\Theta_{r_\epsilon}'$.
Its proof relies on \cite[Lemma 4.6, Lemma 4.7 and Lemma 4.8]{MR3509015}.

In the proof of \cite[Lemma 4.6]{MR3509015}, the first part,
which establishes \cite[(4.35)]{MR3509015}, can be used without adjustment only noting that the bound in $\var(\hat h''(x)) < C$ now depends now on $m$ and $s$.
For the proof of \cite[(4.36)]{MR3509015} in our case, we set
\begin{equation}
a_\epsilon = \sqrt{1 - \frac{t}{\frac{1}{2\pi}\log \frac{1}{\epsilon}}}
\end{equation}
and define the event $A_n$ analogously to our main reference, and let $B_n = \{ |\Gamma_\epsilon'(n\lambda)| > e^{\tau n\lambda}\}$.
Here, $\Gamma_\epsilon'(n\lambda)$ denote the level sets of $\hat h'$.
To establish the bound
\begin{equation}
\sup_{\epsilon>0} \P(B_n) \leq e^{-c\tau n \lambda},
\end{equation}
we need to generalise \cite[Proposition 4.1]{MR3509015}, which we postpone to Section \ref{app:size-level-sets}; see Proposition \ref{prop:exponential-lower-bound-level-sets}.
From this point on, the rest of the argument can be used without further adjustments.

Next, we argue that \cite[Lemma 4.7]{MR3509015} also holds for $\XGFFms$.
To this end, we define the event $A_{\epsilon, M}'$ as in \cite[(4.45)]{MR3509015}.
The second event in \cite[(4.45)]{MR3509015} is not needed in our case,
since the field $\XGFFms$ has periodic boundary condition.
By Proposition \ref{prop:exponential-lower-bound-level-sets} we then have that
\begin{equation}
\lim_{M\to \infty} \limsup_{\epsilon\to 0} \P\big( (A_{\epsilon, M}')^c\big)=0.
\end{equation}  
To conclude, we use the same estimate as in \cite[(4.47)]{MR3509015} together with
\begin{equation}
\sup_{|x-y|\leq 2r\epsilon} \E[|\hat h_x'' - \hat h_y''  |^2] \lesssim_{m,s,r} \frac{1}{ \log\frac{1}{\epsilon}},
\end{equation}
which holds by Lemma \ref{lem:covariance-XsGFF}.

The proof of \cite[Lemma 4.8]{MR3509015} only uses properties of the field through Theorem \ref{thm:near-maxima},
which, as we argued above, also holds with our assumptions.

Finally, the proof of \cite[Proposition 4.5]{MR3509015} can be used without changes,
since the only additional step where properties of the field enter, is once again the application of  Theorem \ref{thm:near-maxima}.

We now move to the discussion of the proof of \eqref{eq:distributional-invariance}.
As in the proof of \cite[Theorem 3.1]{MR3509015}, we assume
\begin{equation}
\max_{\Omega_\epsilon} |h'|, \max_{\Omega_\epsilon} |h''| \leq 3 \frac{1}{\sqrt{2\pi}} \log \frac{1}{\epsilon}.
\end{equation}
This event occurs with high probability thanks to the tightness of $(\max_{\Omega_\epsilon} \XGFFms - m_\epsilon)_\epsilon$, which holds by \cite[Theorem 1.2]{MR3729618}.
Conditional on $|h'(x) - m_\epsilon| \leq \lambda$ we then have by the Taylor expansion $\sqrt{1-a} = 1 - \frac{1}{2}a + O(a^2)$ 
applied to the prefactor of $h'$ in \eqref{eq:decomposition-XGFFms} with $a = \frac{t}{ g \log \frac{1}{\epsilon}}$, $g=\frac{1}{2\pi}$,
\begin{align}
\XGFFms (x) 
&= \Big[ 1- \frac{1}{2} \frac{t}{g \log\frac{1}{\epsilon}} 
+ O_t \Big( \frac{1}{\big(\log\frac{1}{\epsilon}\big)^{2}} \Big) \Big] h' (x)
+ \sqrt{\frac{t}{g \log \frac{1}{\epsilon}} } h''(x) \nnb
&= h'(x) - \frac{1}{2} t \frac{m_\epsilon}{g \log \frac{1}{\epsilon}} + O_{t,\lambda} \Big(\frac{1}{\log \frac{1}{\epsilon}}\Big)  +  \sqrt{\frac{t}{g \log \frac{1}{\epsilon}}}  h''(x) \nnb
& = h'(x) - \frac{\alpha}{2} t + \sqrt{\frac{t}{g \log \frac{1}{\epsilon}}}  h''(x) +  O_{t,\lambda}\Big( \frac{1}{\sqrt{ \log{\frac{1}{\epsilon}}} }\Big).
\end{align}
On the last line we used $\alpha = \sqrt{8\pi}$ and
\begin{equation}
\frac{m_\epsilon}{g \log \frac{1}{\epsilon}} 
= \frac{2\frac{1}{\sqrt{2\pi}} \log \frac{1}{\epsilon}}{\frac{1}{2\pi}\log\frac{1}{\epsilon}}
+ O\Big(\frac{\log \log \frac{1}{\epsilon}}{\log \frac{1}{\epsilon}} \Big)
= \alpha + O\Big(\frac{1}{\sqrt{\log\frac{1}{\epsilon}}}\Big).
\end{equation}
Thus, there is a constant $C_{t,\lambda}$, which does not depend on $\epsilon$, such that
\begin{equation}
\Bigm| \XGFFms (x) - \Big (h'(x) - \frac{\alpha}{2} t + \sqrt{\frac{t}{\frac{1}{2\pi} \log \frac{1}{\epsilon}}}  h''(x)  \Big) \Bigm| \leq \frac{C_{\lambda, t}}{\log \frac{1}{\epsilon}}.
\end{equation}
Next, we define the random set of vertices $\Delta'(\lambda) = \{ x\in \Omega_\epsilon \colon |h'(x) - m_\epsilon|\leq \lambda \}$.
Compared to the proof of \cite[Theorem 3.1]{MR3509015} the restriction to vertices away from the boundary is not necessary.
Then we have analogously to \cite[(4.61)]{MR3509015} for any $\kappa > 0$
\begin{equation}
\lim_{\lambda\to \infty} \limsup_{r\to \infty} \limsup_{\epsilon \to 0} 
\P\Big(  \sum_{\Theta'_{r_\epsilon} \setminus \Delta'(\lambda)} f(x, \XGFFms(x) - m_\epsilon ) >\kappa \Big) = 0,
\end{equation}
where we also use the tightness of the centred maximum of $h'$.

The argument leading to \cite[(4.62)]{MR3509015} only uses the covariance of the field through Theorem \ref{thm:size-level-sets},
from which we only need the lower bound.
Thanks to Proposition \ref{prop:exponential-lower-bound-level-sets} this also holds for $\XGFFms$.

In the rest of the proof, properties of the field $\XGFFms$ only enter through Proposition \ref{prop:exponential-lower-bound-level-sets} and Theorem \ref{thm:near-maxima},
which applies to $\XGFFms$ by the arguments above.
This concludes the proof of \eqref{eq:distributional-invariance}.
\end{proof}

The distributional invariance of the limit points is then used to prove that every such limit point is a Poisson point process; see \cite[Theorem 3.2]{MR3509015}.
We need to verify that the same is true when the extremal process is constructed from $\XGFFms$.
The corresponding statement is as follows.

\begin{theorem}[Analog to {\cite[Theorem 3.2]{MR3509015}}]
\label{thm:extracting-poisson-law}
Suppose $\eta$ is a point process on $\Omega\times \R$ such that \eqref{eq:distributional-invariance} holds for $f_t$ as in \eqref{eq:f-with-Brownian-motion} and some $t>0$ and all continuous $f\colon\Omega\times \R \to [0,\infty)$ with compact support.
Further, assume that $\eta(\Omega\times [0,\infty)) <\infty$ and $\eta(\Omega\times \R) >0$ a.s.
Then there is a random Borel measure $\ZDM$ on $\Omega$ satisfying $\ZDM(\Omega) \in (0,\infty)$ a.s., such that
\begin{equation}
\label{eq:extracting-poisson-law}
\eta \stackrel{d}{=} \PPP(\ZDM(dx) \otimes e^{-\alpha h}dh).
\end{equation} 
\end{theorem}

\begin{proof}
The proof in \cite[Section 3.2]{MR3509015} can be used almost line by line,
since only \eqref{eq:distributional-invariance}, and no particular property of the field $\XGFFms$ enters.
The argument of the proof is not affected when replacing the square $[0,1]^2$ by $\Omega$.
\end{proof}

\subsection{Uniqueness of the intensity measure}
From Section \ref{app:distributional-invariance} we know that every distributional limit of the sequence $(\extprocGFFms)_\epsilon$ is a Poisson point process on $\Omega \times \R$
with an intensity measure of the form $\ZDM(dx) \otimes e^{-\alpha h} dh$ for some random measure $\ZDM(dx)$ on $\Omega$.
The purpose of this section is to argue that the random measure $\ZDM(dx)$ is the same for all subsequences,
thereby establishing the existence a unique process $\eta^{\GFF,s}$ with $\extprocGFFms \to \eta^{\GFF,s}$ weakly as $\epsilon \to 0$.
To this end, we establish the following result for multiple maxima of the field $\XGFFms$.
As in \cite[Section 3.4]{MR3509015}, this together with Theorem \ref{thm:extracting-poisson-law} implies Theorem \ref{thm:ext-process-massive-GFF}.
\begin{theorem}[Analogue to {\cite[Theorem 3.4]{MR3509015}}]
\label{thm:multiple-maxima}
Let $(A_1, \ldots, A_m)$ be a collection of disjoint, non-empty and open subsets of $\Omega$.
Then the law of $\big(\max_{A_i \cap \Omega_\epsilon} \XGFFms -m_\epsilon\big)_{i=1,\ldots, m}$ converges weakly as $\epsilon \to 0$.
\end{theorem}

The proof follows closely the ideas of \cite[Section 5]{MR3509015},
which establishes Theorem \ref{thm:multiple-maxima} for the massless GFF with Dirichlet boundary condition as a generalisation of \cite[Theorem 1.1]{MR3433630}. 
As indicated at the beginning of this section, we use the decomposition of $\Omega$ into macroscopic boxes $V_i$, $i=1,\ldots, K^2$ of sidelength $1/K$.
Recall that by Lemma \ref{lem:decomposition-gff-boxes} this induces a decomposition of the field $\XGFFms$ into a fine field $\tilde X_{s,K}^f$ and a coarse field $\tilde X_{s,K}^c$ with covariances \eqref{eq:tilde-Xf} and \eqref{eq:tilde-Xc}.
This is similar to the Gibbs-Markov decomposition in \cite{MR3433630}, and in fact, our coarse field $\tilde X_{s,K}^c$ satisfies the same covariance estimates as the coarse field in \cite{MR3433630}; see for instance \cite[Lemma 4.4]{MR4399156}.
While it is possible to run the same argument as in \cite[Section 5]{MR3509015} with the decomposition $\XGFFms \stackrel{d}{=} \tilde X_{s,K}^f + \tilde X_{s,K}^c$,
we choose to replace the field $\tilde X_{s,K}^f$ by a collection of independent massless GFFs with zero boundary condition denoted as $X_K^f$. Recall that
\begin{equation}
\cov(X_K^f) = (\Delta_\Gamma)^{-1},
\end{equation}
where $\Delta_\Gamma$ is the Laplacian on $\Omega$ with Dirichlet boundary condition on $\Gamma$.
Thanks to Lemma \ref{lem:G-concentration} the error to $\tilde X_{s,K}^f$ vanishes when taking $K\to \infty$. 
Hence, we continue to work with the field
\begin{equation}
\zeta_{s,K} = X_K^f + \tilde X_{s,K}^c.
\end{equation}
Thus, the fine field is now exactly as in \cite{MR3433630} and hence,
we may invoke many results in this reference without having to worry about different covariances.
Moreover, it is easy to see that it suffices to prove Theorem \ref{thm:ext-process-massive-GFF} with $\zeta_{s,K}$ instead of $\XGFFms$ by Lemma \ref{lem:G-concentration}.
From this point on, the proof in \cite{MR3509015} can be used without major adjustments. We omit further details.

\subsection{Size of the level sets}
\label{app:size-level-sets}
The aim of this section is to prove Proposition \ref{prop:exponential-lower-bound-level-sets} below for the size of the level sets
\begin{equation}
\Gamma_\epsilon^\GFF(\lambda) = \{ x\in \Omega_\epsilon \colon \XGFFms (x) - m_\epsilon \geq \lambda \}.
\end{equation}

\begin{proposition}[\hspace{-3.5pt} Generalisation of {\cite[Proposition 4.1]{MR3262484}}]
\label{prop:exponential-lower-bound-level-sets}
There exists $c>0$ such that for all $\lambda >1$ and all large enough $\tau>0$,
\begin{equation}
\sup_{\epsilon >0} \P(|{\Gamma_\epsilon^\GFF(\lambda)} |  > e^{\tau  \lambda}) \leq e^{-c \tau \lambda}.
\end{equation}
\end{proposition}

We follow closely the proof of \cite[Proposition 4.1]{MR3262484}, which uses two results of \cite{MR3262484}.
Since these were proved for the massless GFF with Dirichlet boundary condition,
we need to generalise them accordingly.
The proofs in \cite{MR3262484} rely on a comparison with a branching random walk (BRW) and a modified branching random walk (MBRW).
In our case it suffices to consider the modified branching random walk, introduced in \cite{MR2846636}, which is defined as follows.

Let $\epsilon=\frac{1}{2^n}$, where $n>0$ is an integer.
For $k \in \{0,\ldots, n\}$ we let $\cB_k$ be the collection of squared boxes in $\epsilon \Z^2$ of sidelength $\epsilon 2^k = \frac{1}{2^{n-k}}$ with corners in $\epsilon \Z^2$,
and let $\cB \cD_k$ denote the subset of $\cB_k$, which consists of boxes of the form $[0,2^{-{n-k}}) \cap \epsilon\Z^2 + (i 2^{-{n-k}}, j 2^{-{n-k}})$.
Note that $\cB \cD_k$ is a partition of $\epsilon\Z^2$ into boxes of sidelength $\epsilon 2^{k}$. 
For $v \in \epsilon \Z^2$, let $\cB_k(v)$ be the collection of boxes in $\cB_k$ that contain $v$ and define $\cB \cD_k(v)$ accordingly. 
Note that $\cB_k(v) = 2^{2k}$ and $\cB \cD_k(v)=1$.
Moreover, denote by $\bar \cB_k$ the subset of $\cB_k$ consisting of boxes, whose lower left corner is in $[0,1) \cap \epsilon \Z^2$.
For $k\in \{0,\ldots, n\}$ and $B\in \bar \cB_k$, let $\bar b_{k,B}$ be independent centred Gaussian random variables with 
$\var(\bar b_{k,B}) = 2^{-2k}$ and define for $B\in \cB_k$
\begin{equation}
b_{k,B} = \bar b_{k,B'} \qquad \text{for} \qquad B \sim_\epsilon B' \in \bar \cB_k,
\end{equation}
where $B \sim_\epsilon B'$ if and only if $B=(i,j) + B'$ for some $i,j \in \Z$.
Then the MBRW $\{\xi_v^\epsilon \colon v\in \Omega_\epsilon\}$ is defined by
\begin{equation}
\label{eq:mbrw-def}
\xi_v^\epsilon =  \sqrt{\frac{{\log 2}}{{2\pi}}}\sum_{k=0}^n \sum_{B\in \cB_k(v)} b_{k,B}.
\end{equation}

Compared to \cite{MR2846636} and \cite{MR3262484} we multiplied the MBRW by a suitable constant so that its variance matches the variance of $\XGFFms$ in first order.
Note that, unlike for the usual branching random walk, we have that $\xi^\epsilon$ has periodic boundary condition on $\Omega_\epsilon$.
 
For $u,v \in \epsilon \Z^2$ let $ \bar d (u,v) = \min_{v\sim_\epsilon w } | u-w|$ be the Euclidean distance between $u$ and $v$ when identifying $\Omega_\epsilon \cong \epsilon \Z^2/ \sim_\epsilon$.
Note that $\bar d$ is the Euclidean distance on the torus $\Omega$, and thus,
when $x,y \in \Omega_\epsilon$, we continue to write $\bar d(x,y) = |x-y|$. Then the covariance of the MBRW is given by Lemma \ref{lem:covariance-mbrw} below.
In light of Lemma \ref{lem:covariance-XsGFF}, we see that it approximates the covariance of the torus GFF $\XGFFms$ up to a constant depending on $m$ and $s$.

\begin{lemma}[\hspace{-3.5pt} Exactly {\cite[Lemma 2.1]{MR3262484}}]
\label{lem:covariance-mbrw}
Let $\epsilon = \frac{1}{2^n}$ and let $\{\xi_x^\epsilon \colon x \in \Omega_\epsilon \}$ be defined as in \eqref{eq:mbrw-def}. Then we have for all $x,y \in \Omega_\epsilon$ as $n\to \infty$
\begin{equation}
\cov (\xi_x^\epsilon, \xi_y^\epsilon) = - \frac{1}{2\pi} \log | x - y|  + O(1).
\end{equation}
\end{lemma}

A key notion in the proof of Proposition \ref{prop:exponential-lower-bound-level-sets} is the maximal sum over vertices for the MBRW $\xi^\epsilon$ and the torus GFF $\XGFFms$ defined by
\begin{align}
\label{eq:maximal-sum-mbrw}
\cQ_{\ell, \epsilon} &= \max \big\{  \sum_{v\in A} \xi^\epsilon (v) \colon |A|=\ell, A\subseteq \Omega_\epsilon \big\}, \\
\label{eq:maximal-sum-gff}
\cS_{\ell, \epsilon} &= \max \big\{  \sum_{v\in A} \XGFFms (v) \colon |A|=\ell, A\subseteq \Omega_\epsilon \big\}.
\end{align}
Moreover, as in \cite{MR3262484}, we denote the set of pairs of points at intermediate scales by
\begin{equation}
\Xi_{\epsilon, r} = \{ (u,v) \in \Omega_\epsilon \times \Omega_\epsilon \colon r\epsilon \leq |u-v| \leq 1/r  \}.
\end{equation}

\begin{lemma}[Analogue of {\cite[Lemma 4.6]{MR3262484}}]
\label{lem:estimate-probability-pairs-of-points}
There exist absolute constants $C,c>0$, such that for all $\epsilon>0$ and $r,\lambda >C$,
\begin{align}
\label{eq:estimate-probability-pairs-of-points}
\P (\exists A \subseteq \Xi_{\epsilon, r}  \text{ with } |A| \geq \log r \colon \forall u,v \in A \colon
\XGFFms(u) + \XGFFms(v) &\geq 2 m_\epsilon - 2\lambda \log \log r) \nnb
&\geq 1-Ce^{-e^{\lambda \log \log r}}.
\end{align}
\end{lemma}

\begin{proof}
We follow closely the proof of \cite[Lemma 4.6]{MR3262484} and use large parts of the notion therein.
Let $R= (\log r)^{-\lambda}$ and $\ell = (\log r)^{\lambda/100}$. We implicitly assume that $R$ and $\ell$ are integer multiples of $\epsilon$.
Let $o=(0,0) \in \Omega_\epsilon$ and define $o_i= (i \ell, 2R)$ for $1 \leq i \leq  M = \floor{1/2\ell} = \floor{(\log r)^{\lambda/100}/2}$.

Let $\cC_i$ be a discrete ball of radius $R$ around $o_i$ and let $B_i \subseteq \cC_i$ be a box of side length $R/8$ centred at $o_i$. 
Note that by construction the boxes are contained in the circles and the circles do not intersect.

Next, we regroup the $M$ circles and boxes into $m= (\log r )^{\lambda/200}$ blocks as follows.
For $j=1, \ldots, M/m$ define the $j$-th blocks by $\mathfrak{C}_j = \{ \cC_i \colon (j-1)m <i \leq jm \}$ and $\mathfrak{B} = \{ B_i \colon (j-1)m <i \leq jm \}$.
The goal is to find in each box a pair of points $(u,v) \in \Xi_{\epsilon, r}$ that satisfies the condition in the probability in \eqref{eq:estimate-probability-pairs-of-points}.

To this end, we consider the maximal sum over pairs of $\XGFFms$ in each block of boxes $\mathfrak{B}_j$. For concreteness, we carry out the argument for $j=1$.
To ease the notation we write $\mathfrak{B} = \mathfrak{B}_1$ and $\mathfrak{C}=\mathfrak{C}_1$.
Moreover, we denote by $\partial \cC$ the boundary of a circle $\cC$ and set $\partial \mathfrak{C} = \cup_{C \in \mathfrak{C}} \partial \cC$.

Let $\Delta_{\partial \mathfrak{C}}$ be the Laplacian with zero boundary condition on $\partial \mathfrak{C}$ defined analogously to $\Delta_\Gamma$.
Then by the same arguments as in the proof of Lemma \ref{lem:decomposition-gff-boxes} we can write
\begin{equation}
\XGFFms(v) = g_v^B + \phi_v \qquad \text{ for all } v \in B \subseteq \cC \in \mathfrak{C}, 
\end{equation}
where $g^B$ is a Gaussian free field on $\cC \supseteq B$ with mass $\sqrt{m^2 + 1/s}$ and Dirichlet boundary condition on $\partial \cC$ such that $\{ g^B \colon B \in \mathfrak{B} \}$ is a collection of independent fields, and the binding field $\phi$ is independent of everything with
\begin{equation}
\cov (\phi )= (-\Delta_{\partial \mathfrak{C}} + m^2 + 1/s)^{-1} - (-\Delta + m^2 +1/s)^{-1}.
\end{equation} 
For $B \in \mathfrak{B}$ define $( \chi_{1,B}, \chi_{2,B} ) \in B\times B \cap \Xi_{\epsilon, r}$, such that
\begin{equation}
g_{\chi_{1,B}}^B + g_{\chi_{2,B}}^B = \max_{u,v \in B\times B \cap \Xi_{\epsilon, r}} g_v^B + g_u^B.
\end{equation}

Then we have similarly as in \cite{MR3262484} for $\lambda$ large enough
\begin{equation}
\P(g_{\chi_{1,B}}^B + g_{\chi_{2,B}}^B \geq 2m_\epsilon - \lambda \log \log r) \geq 1/4.
\end{equation}

Let $W= \{ (\chi_{1,B}, \chi_{2,B}) \colon g_{\chi_{1,B}}^B + g_{\chi_{2,B}}^B \geq 2m_\epsilon - \lambda \log \log r,\, B\in \cB \}$. Then we have by independence for some constant $c>0$
\begin{equation}
\P\big( |W| \leq \frac{1}{8}m \big) \leq e^{-cm}.
\end{equation}

To conclude, we need to prove that the binding field $\phi$ satisfies $\phi_u+ \phi_v \geq 0$ for $(u,v) \in W$ with high probability.
To this end, we generalise \cite[Lemma 4.7]{MR3262484}, which itself is a generalisation of \cite[Lemma 2.3]{MR3101848}. 
This estimate can be proved by using only the covariance estimates in \cite[Lemma 2.2]{MR3101848} as remarked below equation \cite[(16)]{MR3101848}.
Since our covariance estimate only differs in the constants, which depend on $m$ and $s$,
we have for some constants $c,C>0$
\begin{equation}
\P\big (\phi_u + \phi_v \leq 0 \text{ for all } (u,v) \in W \big) \leq C e^{-c(\log r)^{c\lambda}}. 
\end{equation}

Going back the the proof of the Lemma of \cite[Lemma 4.6]{MR3262484}, we can conclude
\begin{align}
\P\big(\max_{B \in \cB} \max_{u,v \in B  \times B \cap \Xi_{\epsilon, r}} \XGFFms (u) + \XGFFms (v) \geq 2m_\epsilon -2 \lambda \log \log r \big) \geq 1- Ce^{-c (\log r)^{c\lambda} }.
\end{align}

We can apply the same argument to all $\mathfrak{B}_j$.
Thus, with probability $1- Ce^{-c (\log r)^{c\lambda} }$ there is a pair of points $(\chi_{1,j}, \chi_{2,j}) \in \Xi_{\epsilon,r}$, which satisfies
\begin{equation}
\XGFFms(\chi_{1,j}) + \XGFFms( \chi_{2,j} ) \geq 2 m_\epsilon - 2\lambda \log \log r.
\end{equation}
Now, define $A= \{ (\chi_{1,j}, \chi_{2,j}) \colon 1\leq j \leq m \}$.
Note that for $r$ large enough, we have $|A| = M/m = (\log r)^{\lambda/200}/2 \geq \log r$.
Moreover, a union bound yields for some constant $\tilde C > C$
\begin{align}
\P\big(\min_{(u,v)\in A} \XGFFms (u) + \XGFFms (v) &\leq 2m_\epsilon - 2\lambda \log \log r \big) \nnb
&\leq \sum_{j=1}^{M/m} \P(\XGFFms (\chi_{1,j}) + \XGFFms (\chi_{2,j}) \leq 2m_\epsilon - 2\lambda \log \log r ) \nnb
&\leq (\log r)^{\lambda/200}/2 Ce^{-c (\log r)^{c\lambda} } \leq \tilde C e^{-c (\log r)^{c\lambda} }, 
\end{align}
from which the estimate \eqref{eq:estimate-probability-pairs-of-points} follows.
\end{proof}

\begin{lemma}[Analogue to {\cite[eq. (59)]{MR3262484}}]
\label{lem:expectation-maximal-sum}
There is a constant $c>0$ such that for sufficiently large $\ell>0$, we have
\begin{equation}
\E \cS_{\ell,\epsilon} \leq \ell (m_\epsilon - c\log \ell).
\end{equation}
\end{lemma}

\begin{proof}
We follow closely the first part in the proof of \cite[Theorem 1.2]{MR3262484},
where the following argument was carried out for the usual BRW.
We argue that it also holds for the MBRW. Let
\begin{equation}
\Xi_{\epsilon, x}^* = \bigcup_{i=m_\epsilon - M_\epsilon}^{x} \Xi_\epsilon(i),
\end{equation}
where $M_\epsilon$ is the maximum of the MBRW $\xi^\epsilon$ and
\begin{equation}
\Xi_\epsilon(x) = \big \{ x\in \Omega_\epsilon \colon \xi^\epsilon (x) \in [m_\epsilon-x-1, m_\epsilon-x] \big\}
\end{equation}
is the set of points in $\Omega_\epsilon$ at distance roughly $x$ behind the maximum.
Then we have for the maximal sum $\cQ_{\ell, \epsilon}$ defined in \eqref{eq:maximal-sum-mbrw} 
\begin{align}
\cQ_{\ell, \epsilon} 
&\leq \ell M_\epsilon \mathbf{1}_{\{  | \Xi_{\epsilon, \log \ell /2\alpha} | > \ell/2  \}} 
+(\ell M_\epsilon - \ell \log \ell/4\alpha )\mathbf{1}_{\{  |\Xi_{\epsilon, \log \ell / 4\alpha }| \leq \ell/2  \}}
\nnb
&\leq \ell M_\epsilon - \mathbf{1}_{\{|\Xi_{\epsilon, \log \ell /2\alpha }| \leq \ell/2 \} } \ell \log \ell / 4\alpha,
\end{align}
where $\alpha = \sqrt{8\pi}$. 
Applying expectation, we may invoke \cite[Lemma 3.7]{MR3262484} and a generalisation of \cite[Proposition 3.3]{MR3262484} to the MBRW,
from which the statement follows with $\cQ_{\ell, \epsilon}$ in place of $\cS_{\ell, \epsilon}$ for a suitable constant $c>0$.
Using the Sudakov-Fernique inequality as stated in \cite[Lemma 2.2]{MR3262484} togehter with \cite[Lemma 2.7]{MR3262484} and the ideas in the proof of \cite[Lemma 2.9]{MR3262484}, it follows that
\begin{equation}
\E[\cS_{\ell,\epsilon}] \leq \E [\cQ_{\ell, 2^{-k}\epsilon} ]
\end{equation}
for some $k\in \N$.
This concludes the proof of Lemma \ref{lem:expectation-maximal-sum}.
\end{proof}

\begin{proof}[Proof of Proposition \ref{prop:exponential-lower-bound-level-sets}]
The proof in \cite{MR3262484} is based on \cite[Lemma 4.2]{MR3262484} and \cite[Lemma 4.3]{MR3262484}.
To prove the analogues of these results for the torus GFF $\XGFFms$ we use the same arguments as in \cite{MR3262484},
noting that all covariance dependencies are isolated in Lemma \ref{lem:estimate-probability-pairs-of-points} and Lemma \ref{lem:expectation-maximal-sum}.

The rest of the argument in \cite{MR3262484} can then be used without further adjustment.
This concludes the proof of Proposition \ref{prop:exponential-lower-bound-level-sets}.
\end{proof}

\section*{Acknowledgements}
The author thanks Roland Bauerschmidt for the initial discussion of this topic and helpful comments on the first draft of this paper.
The author also thanks Marek Biskup and Oren Louidor for essential comments that led to the approach of considering the joint convergence of the extremal process and the continuous field in Theorem \ref{thm:convergence-laplace-functionals-phis},
as well as Ofer Zeitouni for the discussion on the generalisation of Theorem \ref{thm:ext-process-massive-GFF}.
Finally, the author thanks the two referees for remarks and suggestions on the first draft, which significantly improved the quality of the exposition of the content.
This work was partially supported by the UK EPSRC grant EP/L016516/1 for the Cambridge Centre for Analysis.

\bibliography{all}

\begin{thebibliography}{10}

\bibitem{MR2537549}
L.~Addario-Berry and B.~Reed.
\newblock Minima in branching random walks.
\newblock {\em The Annals of Probability}, 37(3):1044--1079, 2009.

\bibitem{MR3101852}
E.~A\"{\i}d\'{e}kon, J.~Berestycki, E.~Brunet, and Z.~Shi.
\newblock Branching {B}rownian motion seen from its tip.
\newblock {\em Probability Theory and Related Fields}, 157(1-2):405--451, 2013.

\bibitem{MR3594368}
L.-P. Arguin, D.~Belius, and P.~Bourgade.
\newblock Maximum of the characteristic polynomial of random unitary matrices.
\newblock {\em Communications in Mathematical Physics}, 349(2):703--751, 2017.

\bibitem{MR3129797}
L.-P. Arguin, A.~Bovier, and N.~Kistler.
\newblock The extremal process of branching {B}rownian motion.
\newblock {\em Probability Theory and Related Fields}, 157(3-4):535--574, 2013.

\bibitem{Barashkov2022SineGordon}
N.~Barashkov.
\newblock A stochastic control approach to {S}ine-{G}ordon {EQFT}.
\newblock 2022.
\newblock Preprint, arXiv:2203.06626.

\bibitem{MR3395460}
J.~Barral, A.~Kupiainen, M.~Nikula, E.~Saksman, and C.~Webb.
\newblock Basic properties of critical lognormal multiplicative chaos.
\newblock {\em The Annals of Probability}, 43(5):2205--2249, 2015.

\bibitem{MR4303014}
R.~Bauerschmidt and T.~Bodineau.
\newblock Log-{S}obolev inequality for the continuum sine-{G}ordon model.
\newblock {\em Communications on Pure and Applied Mathematics},
  74(10):2064--2113, 2021.

\bibitem{MR4399156}
R.~Bauerschmidt and M.~Hofstetter.
\newblock Maximum and coupling of the sine-{G}ordon field.
\newblock {\em The Annals of Probability}, 50(2):455--508, 2022.

\bibitem{BauerschmidtWebb2023Coleman}
R.~Bauerschmidt and C.~Webb.
\newblock The {C}oleman correspondence at the free {F}ermion point.
\newblock {\em Journal of the European Mathematical Society}, 2023.

\bibitem{MR4164451}
D.~Belius and W.~Wu.
\newblock Maximum of the {G}inzburg-{L}andau fields.
\newblock {\em The Annals of Probability}, 48(6):2647--2679, 2020.

\bibitem{MR3509015}
M.~Biskup and O.~Louidor.
\newblock Extreme local extrema of two-dimensional discrete {G}aussian free
  field.
\newblock {\em Communications in Mathematical Physics}, 345(1):271--304, 2016.

\bibitem{MR3787554}
M.~Biskup and O.~Louidor.
\newblock Full extremal process, cluster law and freezing for the
  two-dimensional discrete {G}aussian free field.
\newblock {\em Advances in Mathematics}, 330:589--687, 2018.

\bibitem{MR4082182}
M.~Biskup and O.~Louidor.
\newblock Conformal symmetries in the extremal process of two-dimensional
  discrete {G}aussian free field.
\newblock {\em Communications in Mathematical Physics}, 375(1):175--235, 2020.

\bibitem{MR1880237}
E.~Bolthausen, J.-D. Deuschel, and G.~Giacomin.
\newblock Entropic repulsion and the maximum of the two-dimensional harmonic
  crystal.
\newblock {\em The Annals of Probability}, 29(4):1670--1692, 2001.

\bibitem{MR3164771}
A.~Bovier and L.~Hartung.
\newblock The extremal process of two-speed branching {B}rownian motion.
\newblock {\em Electronic Journal of Probability}, 19:no. 18, 28, 2014.

\bibitem{MR3433630}
M.~Bramson, J.~Ding, and O.~Zeitouni.
\newblock Convergence in law of the maximum of the two-dimensional discrete
  {G}aussian free field.
\newblock {\em Communications on Pure and Applied Mathematics}, 69(1):62--123,
  2016.

\bibitem{MR2846636}
M.~Bramson and O.~Zeitouni.
\newblock Tightness of the recentered maximum of the two-dimensional discrete
  {G}aussian free field.
\newblock {\em Communications on Pure and Applied Mathematics}, 65(1):1--20,
  2012.

\bibitem{MR494541}
M.~D. Bramson.
\newblock Maximal displacement of branching {B}rownian motion.
\newblock {\em Communications on Pure and Applied Mathematics}, 31(5):531--581,
  1978.

\bibitem{MR914427}
D.~Brydges and T.~Kennedy.
\newblock Mayer expansions and the {H}amilton-{J}acobi equation.
\newblock {\em Journal of Statistical Physics}, 48(1-2):19--49, 1987.

\bibitem{PhysRevE.63.026110}
D.~Carpentier and P.~Le~Doussal.
\newblock Glass transition of a particle in a random potential, front selection
  in non\-linear renormalization group, and entropic phenomena in {L}iouville
  and sinh-{G}ordon models.
\newblock {\em Physical Review E}, 63:026110, Jan 2001.

\bibitem{1808.02594}
A.~Chandra, M.~Hairer, and H.~Shen.
\newblock The dynamical sine-{G}ordon model in the full subcritical regime.
\newblock Preprint, arXiv:1808.02594.

\bibitem{MR3848391}
R.~Chhaibi, T.~Madaule, and J.~Najnudel.
\newblock On the maximum of the {${\rm C}\beta {\rm E}$} field.
\newblock {\em Duke Mathematical Journal}, 167(12):2243--2345, 2018.

\bibitem{MR1240586}
J.~Dimock and T.~Hurd.
\newblock Construction of the two-dimensional sine-{G}ordon model for
  {$\beta<8\pi$}.
\newblock {\em Commun. Math. Phys.}, 156(3):547--580, 1993.

\bibitem{MR1777310}
J.~Dimock and T.~Hurd.
\newblock Sine-{G}ordon revisited.
\newblock {\em Annales Henri Poincar\'{e}}, 1(3):499--541, 2000.

\bibitem{MR3101848}
J.~Ding.
\newblock Exponential and double exponential tails for maximum of
  two-dimensional discrete {G}aussian free field.
\newblock {\em Probability Theory and Related Fields}, 157(1-2):285--299, 2013.

\bibitem{MR3729618}
J.~Ding, R.~Roy, and O.~Zeitouni.
\newblock Convergence of the centered maximum of log-correlated {G}aussian
  fields.
\newblock {\em The Annals of Probability}, 45(6A):3886--3928, 2017.

\bibitem{MR3262484}
J.~Ding and O.~Zeitouni.
\newblock Extreme values for two-dimensional discrete {G}aussian free field.
\newblock {\em The Annals of Probability}, 42(4):1480--1515, 2014.

\bibitem{MR3262492}
B.~Duplantier, R.~Rhodes, S.~Sheffield, and V.~Vargas.
\newblock Critical {G}aussian multiplicative chaos: convergence of the
  derivative martingale.
\newblock {\em The Annals of Probability}, 42(5):1769--1808, 2014.

\bibitem{108.170601}
Y.~V. Fyodorov, G.~A. Hiary, and J.~P. Keating.
\newblock Freezing transition, characteristic polynomials of random matrices,
  and the {R}iemann zeta function.
\newblock {\em Physical review letters}, 108(17), 2012.

\bibitem{MR3151088}
Y.~V. Fyodorov and J.~P. Keating.
\newblock Freezing transitions and extreme values: random matrix theory, and
  disordered landscapes.
\newblock {\em Philosophical Transactions of the Royal Society of London.
  Series A. Mathematical, Physical and Engineering Sciences},
  372(2007):20120503, 32, 2014.

\bibitem{MR887102}
J.~Glimm and A.~Jaffe.
\newblock {\em Quantum physics}.
\newblock Springer-Verlag, New York, second edition, 1987.

\bibitem{MR3452276}
M.~Hairer and H.~Shen.
\newblock The dynamical sine-{G}ordon model.
\newblock {\em Communications in Mathematical Physics}, 341(3):933--989, 2016.

\bibitem{MR893913}
S.~P. Lalley and T.~Sellke.
\newblock A conditional limit theorem for the frontier of a branching
  {B}rownian motion.
\newblock {\em The Annals of Probability}, 15(3):1052--1061, 1987.

\bibitem{MR3791470}
G.~Last and M.~Penrose.
\newblock {\em Lectures on the {P}oisson process}, volume~7 of {\em Institute
  of Mathematical Statistics Textbooks}.
\newblock Cambridge University Press, Cambridge, 2018.

\bibitem{MR0329492}
K.~Osterwalder and R.~Schrader.
\newblock Axioms for {E}uclidean {G}reen's functions.
\newblock {\em Commun. Math. Phys.}, 31:83--112, 1973.

\bibitem{MR0376002}
K.~Osterwalder and R.~Schrader.
\newblock Axioms for {E}uclidean {G}reen's functions. {II}.
\newblock {\em Communications in Mathematical Physics}, 42:281--305, 1975.
\newblock With an appendix by Stephen Summers.

\bibitem{MR3848227}
E.~Paquette and O.~Zeitouni.
\newblock The maximum of the {CUE} field.
\newblock {\em International Mathematics Research Notices IMRN},
  (16):5028--5119, 2018.

\bibitem{MR571672}
D.~Pollard.
\newblock Induced weak convergence and random measures.
\newblock {\em Zeitschrift f\"{u}r Wahr\-scheinlich\-keits\-theorie und
  Verwandte Gebiete}, 37(4):321--328, 1976/77.

\bibitem{MR4396197}
E.~Powell.
\newblock Critical {G}aussian multiplicative chaos: a review.
\newblock {\em Markov Processes and Related Fields}, 27(4):557--506, 2021.

\bibitem{MR0161603}
R.~Streater and A.~Wightman.
\newblock {\em P{CT}, spin and statistics, and all that}.
\newblock W. A. Benjamin, Inc., New York-Amsterdam, 1964.

\bibitem{MR3933043}
W.~Wu and O.~Zeitouni.
\newblock Subsequential tightness of the maximum of two dimensional
  {G}inzburg-{L}andau fields.
\newblock {\em Electronic Communications in Probability}, 24:Paper No. 19, 12,
  2019.

\end{thebibliography}
\bibliographystyle{abbrv}
\end{document}